\theoremstyle{definition}\newtheorem{theorem}{Theorem}[section]
\newtheorem*{theorem*}{theorem}
\newtheorem{proposition}[theorem]{Proposition}
\newtheorem{lemma}[theorem]{Lemma}
\newtheorem{corollary}[theorem]{Corollary}
\theoremstyle{definition}\newtheorem{definition}[theorem]{Definition}
\theoremstyle{remark}
\renewcommand{\to}[1][]{\xrightarrow{#1}}
\DeclareMathOperator{\Var}{\mathrm{Var}}\DeclareMathOperator{\U}{\mathrm{U}}\DeclareMathOperator{\Ann}{\mathrm{Ann}}\DeclareMathOperator{\Z}{\mathrm{Z}}\DeclareMathOperator{\cls}{\mathrm{cls}}\DeclareMathOperator{\I}{\mathrm{I}}\DeclareMathOperator{\rk}{\mathrm{rk}}\DeclareMathOperator{\Id}{\mathrm{Id}}\DeclareMathOperator{\End}{\mathrm{End}}\DeclareMathOperator{\pr}{\mathrm{pr}}\DeclareMathOperator{\Gr}{\mathrm{Gr}}\DeclareMathOperator{\Imm}{\mathrm{Im}}\DeclareMathOperator{\gr}{\mathrm{gr}}\DeclareMathOperator{\Irr}{\mathrm{Irr}}\DeclareMathOperator{\Sp}{\mathrm{Sp}}\DeclareMathOperator{\Hom}{\mathrm{Hom}}
\newcounter{AP}
\begin{document}
\author{Ivan Penkov}\address{Ivan Penkov (on leave from Jacobs University Bremen): Yale University, Department of Mathematics 10, Hillhouse Ave., New Haven, CT 06510, USA}\email{i.penkov@jacobs-university.de}\author{Alexey Petukhov}\address{Alexey Petukhov: Institute for information transmission problems, Bolshoy Karetniy 19-1, Moscow 127994, Russia}
\email{alex--2@yandex.ru}
\title{On ideals in the enveloping algebra of a locally simple Lie  algebra}\maketitle
\begin{abstract}We study (two-sided) ideals $I$ in the enveloping algebra $\U(\frak
g_\infty)$ of an infinite-dimensional Lie algebra $\frak g_\infty$
obtained as the union (equivalently, direct limit) of an arbitrary
chain of embeddings of simple finite-dimensional Lie
algebras\begin{center}$\frak g_1\to\frak g_2\to...\to\frak
g_n\to...$\end{center}with $\lim\limits_{n\to\infty}\dim\frak
g_n=\infty$. Our main result is an explicit description of the
zero-sets of
 the corresponding graded ideals $\gr I$. We use this description and
results of A.~Zhilinskii to prove Baranov's conjecture that, if
$\frak g_\infty$ is not diagonal in the sense of A. Baranov and A.
Zhilinskii, then $\U(\frak g_\infty)$ admits a single non-zero
proper ideal: the augmentation ideal.

Our study is based on a complete description of the radical Poisson
ideals in ${\bf S}^\cdot(\frak g_\infty)$ and their zero-sets. We
then discuss in detail integrable ideals of $\U(\frak g_\infty)$,
i.e. ideals $I\subset\U (\frak g_\infty)$ for which $I\cap\U (\frak
g_n)$ is an intersection of ideals of finite-codimension in
$\U(\frak g_n)$ for any $n\ge 1$. We present a classification of
prime integrable ideals based on work of A.~Zhilinskii. For $\frak
g_\infty\cong\frak{sl}_\infty, \frak{so}_\infty$, all zero-sets of
radical Poisson ideals of ${\bf S}^\cdot(\frak g_\infty)$ arise from
prime integrable ideals of $\U(\frak g_\infty)$. For $\frak
g_\infty\cong\frak{sp}_\infty$ only ``half'' of the zero-sets of
Poisson ideals ${\bf S}^\cdot(\frak g_\infty)$ arise from integrable
ideals of $\U(\frak g_\infty)$.

{\bf Key words:} associated variety, coherent local system,
integrable ideal, locally finite Lie algebra, moment map, Poisson
ideal.

{\bf AMS subject classification.} Primary: 14L30, 17B35, 17B63,
17B65.
\end{abstract}
\section{Introduction}\label{Sintro}
We work over an algebraically closed field of characteristic zero. A {\it locally finite Lie algebra} is by definition a Lie algebra isomorphic to the limit of some system of finite-dimensional Lie algebras~\cite{BS}. In what follows we restrict ourselves to locally finite Lie algebras $\frak g_\infty$ defined as direct limits of respective sequences of embeddings of simple Lie algebras\begin{center}{}\hspace{119pt}$\frak g_1\to\frak g_2\to...\to\frak g_n\to...$\hspace{119pt}(1)\end{center}such that $\lim\limits_{n\to\infty}\dim\frak g_n=\infty$; for brevity we refer just to these Lie algebras as {\it locally simple} Lie algebras.

Locally simple Lie algebras are natural infinite-dimensional analogs of finite-dimensional simple Lie algebras, and their representation theory is interesting and challenging.

In this paper we study ideals in the universal enveloping algebra $\U(\frak g_\infty)$ of a locally simple Lie algebra $\frak g_\infty$, in particular annihilators of simple $\frak g_\infty$-modules, i.e. primitive ideals. The structure of ideals in  $\U(\frak g_\infty)$ differs significantly from the structure of ideals in the enveloping algebra of a simple finite-dimensional Lie algebra. This is not surprising once one observes that the center of $\U(\frak g_\infty)$ consists of constants only.

The theory of ideals in $\U(\frak g_\infty)$ has been initiated by A.~Zhilinskii in~\cite{Zh2}, \cite{Zh1}. His method is to study ``coherent local systems'' of finite-dimensional modules of the sequence of Lie algebras $\frak g_n$ and to construct ideals in $\U(\frak g_\infty)$ whose intersections with $\U(\frak g_n)$ are the joint annihilators of the modules at the $n$-th level of the respective local systems. We call the ideals of $\U(\frak g_\infty)$ arising in this way {\it integrable} ideals.

A central idea of the present paper is to gain information about
ideals in $\U(\frak g_\infty)$ by studying their associated
``varieties''. At first this leads to the study of radical Poisson
ideals in the symmetric algebra {\bf S}$^\cdot(\frak g_\infty)$. Our first notable result is that {\bf
S}$^\cdot(\frak g_\infty)$ admits a non-zero Poisson ideal
$J$ of locally infinite codimension (i.e. such that $J\cap${\bf
S}$^\cdot(\frak g_n)$ is of infinite codimension in {\bf
S}$^\cdot(\frak g_n)$ for almost all $n$) if and only if $\frak
g_\infty$ is isomorphic to one of the three locally simple Lie
algebras $\frak{sl}_\infty, \frak{so}_\infty, \frak{sp}_\infty$.

This result, together with results of A.~Zhilnskii~\cite{Zh1},
yields the following corollary: if $\frak g$ is not isomorphic to
$\frak{sl}_\infty, \frak{so}_\infty, \frak{sp}_\infty$, and
$\U(\frak g_\infty)$ has a non-zero proper ideal $I$ which does not
coincide with the augmentation ideal, then $I$ is of locally finite
codimension (i.e. $I\cap\U (\frak g_n)$ is of finite codimension in
$\U(\frak g_n)$ for all $n$) and $\frak g_\infty$ is diagonal.
Diagonal locally simple Lie algebras form a natural class of locally
simple Lie algebras which has been introduced by A.~Baranov and A.
Zhilinskii; in fact, these authors have given an intricate and
elegant classification of diagonal locally simple Lie
algebras~\cite{BZh}. Moreover, the above corollary had been
conjectured by A.~Baranov.

As a second notable result on Poisson ideals we compute the set of
zeros $\Var(J)$ (i.e. the associated ``variety'') of any Poisson ideal $J\subset${\bf
S}$^\cdot(\frak g_\infty)$ for $\frak g_\infty=\frak{sl}_\infty,
\frak{so}_\infty, \frak{sp}_\infty$.

We then pass to the study of ideals in $\U(\frak g_\infty)$ for
$\frak g_\infty=\frak{sl}_\infty, \frak{so}_\infty,
\frak{sp}_\infty$. In Section~\ref{Sls} we review Zhilnskii's
results and, most importantly, his classification of irreducible
coherent local systems. Zhilinskii's results yield an explicit
description of all prime integrable ideals in $\U(\frak g_\infty)$,
Theorem~\ref{Tclid}.

We finally describe the associated ``varieties'' of integrable
ideals. In particular, we show that if $\frak
g_\infty=\frak{sl}_\infty, \frak{so}_\infty$ and $J$ is a radical
Poisson ideal of ${\bf S}^\cdot(\frak g_\infty)$, there exists a
prime integrable ideal $I$ of $\U(\frak g_\infty)$ such that
$\Var(I)=\Var(J)$. For $\frak g_\infty=\frak{sp}_\infty$ the
situation is different: given an ideal $J$, there exists an ideal
$I$ of $\U(\frak g_\infty)$ such that $\Var(I)=\Var(J)$, but $I$ is
not necessarily integrable.

We thank A.~Baranov for communicating to us his conjecture and also
the results of A.~Zhilinskii. Ivan Penkov acknowledges support from the DFG via SPP 3188 ``Darstellungstheorie''.

\section{Preliminaries}\label{Spre}
We fix an algebraically closed field $\mathbb F$ of characteristic zero. All
vector spaces (including Lie algebras) are assumed to be defined
over $\mathbb F$. If $V$ is a vector space, $V^*$ stands for the
dual space Hom$_\mathbb F(V, \mathbb F)$. All varieties we consider are algebraic varieties over $\mathbb F$ (with Zariski topology). When considering locally finite Lie algebras or their enveloping algebras we assume
that any given sequence (1) consists of inclusions, so we can freely
interchange $\lim\limits_{\to}\frak g_n$ with $\cup_n\frak g_n$ and $\lim\limits_{\to}\U(\frak g_n)$ with $\cup_n\U(\frak g_n)$.

There is no classification of general locally simple Lie algebras: a
classification is only available for the so called diagonal locally
simple Lie algebras, see~\cite{BZh}. Among diagonal locally simple
Lie algebras a prominent role is played by the three simple Lie
algebras $\frak{sl}_\infty, \frak{so}_\infty$ and $\frak{sp}_\infty$
which can be defined as unions of the respective chains of
inclusions of classical finite-dimensional Lie algebras of types
$\frak{sl}, \frak{so},$ or $\frak{sp}$ under the obvious ``left
upper-corner inclusions''. An important result, see~\cite{B}
or~\cite{BS}, states that, up to isomorphism, these three Lie
algebras are the only locally simple finitary Lie algebras, i.e.
locally simple Lie algebras which admit a countable-dimensional
faithful module with a basis such that the endomorphism arising from
each element of the Lie algebra is given by a matrix with finitely
many non-zero entries.

Let $G$ be a connected algebraic group with Lie algebra, $\frak g$
and $I\subset\U (\frak g)$ be an ideal in the enveloping algebra
$\U(\frak g)$ of $\frak g$ (by an ideal in a ring we mean a proper
two-sided ideal). The degree filtration \{U$(\frak g)^{\le
d}\}_{d\in\mathbb Z_{\ge0}}$ on
 $\U(\frak g)$ defines the filtration $\{I\cap \U (\frak g)^{\le
d}\}_{d\in\mathbb Z_{\ge0}}$ on $I$. The associated graded object
gr$I:=\oplus_d((I\cap\U (\frak g)^{\le d})/(I\cap \U (\frak g)^{\le
d-1}))$ is a $G$-stable ideal of gr~U$(\frak g)=${\bf
S}$^\cdot(\frak g)$. We denote the set of zeros of $\gr I$ in $\frak g^*$ by $\Var(I)\subset\frak g^*$. The variety $\Var(I)$ is a $G$-stable subvariety of $\frak g^*$ and is, by definition,
 the {\it associated variety} of $I$. The ideal $I$ is
of finite codimension in $\U(\frak g)$ if and only if gr$I$ is of
finite codimension in {\bf S}$^\cdot(\frak g$). Moreover, gr$I$ is
of finite codimension in {\bf S}$^\cdot(\frak g$) if and only if
$\Var(I)=0$. We note that the radical rad(gr$I$) of gr$I$ is also $G$-stable.

It is well known that {\bf S}$^\cdot(\frak g)$ is a Poisson algebra, i.e. {\bf S}$^\cdot(\frak g)$ has an $\mathbb F$-bilinear operation
\begin{center}\{$~\!\cdot, \cdot$\}: {\bf S}$^\cdot(\frak g)\times${\bf S}$^\cdot(\frak
g)\to${\bf S}$^\cdot(\frak g)$\end{center}which is a Lie bracket and
is compatible with multiplication. An ideal $J\subset${\bf
S}$^\cdot(\frak g)$ is {\it Poisson} whenever $\{f, J\}\subset J$
for any $f\in${\bf S}$^\cdot(\frak g)$. Assume that $\frak g$ is
finite-dimensional and semisimple and let $G$ be the adjoint group
of $\frak g$. Then the radical Poisson ideals of {\bf S}$^\cdot(\frak g)$
are in one-to-one correspondence with the $G$-stable Zariski-closed
subsets of $\frak g^*$. A description of such sets is presented
in~\cite{Bor}.

Let $\frak g_\infty$ be a locally simple Lie algebra. The same
arguments as in the previous paragraph assign to any ideal
$I\subset\U (\frak g_\infty)$ a radical ideal rad(gr$I)\subset${\bf
S}$^\cdot(\frak g_\infty)$, which is stable under the adjoint action
of $\frak g_\infty$ on {\bf S}$^\cdot(\frak g_\infty)$; in what
follows we call $\frak g_\infty$-stable ideals of {\bf
S}$^\cdot(\frak g_\infty)$ Poisson. We denote the set of zeros of
gr$I$ in $\frak g_\infty^*$ by $\Var(I)$ and refer to $\Var(I)$ as
the associated ``variety'' of $I$.

Note that $\Var(I)$ is a proj-variety, i.e. an inverse limit of algebraic varieties. Indeed, fix a sequence (1) with $\frak g_\infty=\lim\limits_{\to}\frak g_n$ and let
$\overline{\pr_{\frak g_n}\Var(I)}\subset\frak g_n^*$
be the closure of the image of $\Var(I$) under the natural projection
pr$_{\frak g_n}: \frak g_\infty^*\to\frak g_n^*$; by definition
$\overline{\pr_{\frak g_n}\Var(I)}\subset\frak g_n^*$
is the set of zeros of $(\gr I)\cap${\bf S}$^\cdot(\frak g_n)$ in $\frak g_n^*$. The space $\frak g_\infty^*$ equals the inverse limit
$\varprojlim \frak g_n^*$, and therefore $\Var(I)\subset\frak
g_\infty^*$ is the inverse limit of the algebraic varieties
$\overline{\pr_{\frak g_n}\Var(I)}$.

An ideal $I$ is of {\it locally finite codimension} in $\U(\frak g_\infty)$ (i.e.
$I\cap\U (\frak g_n)$ is of finite codimension in $\U(\frak g_n)$ for
all $n$) if and only if the ideal gr$I$ is of {\it locally finite
codimension} in {\bf S}$^\cdot(\frak g_\infty$) (i.e.
(gr$I)\cap${\bf S}$^\cdot(\frak g_n)$ is of finite codimension in
{\bf S}$^\cdot(\frak g_n)$ for all $n$). Furthermore, gr$I$ is of
locally finite codimension in {\bf S}$^\cdot(\frak g_\infty$) if and
only if $\Var(I)=0$. We call an ideal $I$ of {\it locally infinite codimension} if $I$ is
not of locally finite codimension.

The above discussion implies in particular the following.
\begin{proposition}\label{Clfcd}Assume that $\frak g_\infty$ is a locally simple Lie algebra. If $\U(\frak g_\infty)$ admits a non-zero ideal of locally infinite codimension, then {\bf S}$^\cdot(\frak g_\infty)$ admits a non-zero Poisson ideal of locally infinite codimension.\end{proposition}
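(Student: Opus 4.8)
The plan is to show that $\gr I$ itself already does the job (or, if one prefers a radical ideal, $\rad(\gr I)$). Let $I\subset\U(\frak g_\infty)$ be a non-zero ideal of locally infinite codimension. I would verify three things about $\gr I\subset\gr\U(\frak g_\infty)={\bf S}^{\cdot}(\frak g_\infty)$: that it is non-zero, that it is Poisson (in the sense fixed in Section~\ref{Spre}, i.e. $\frak g_\infty$-stable), and that it is of locally infinite codimension.

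Non-vanishing is immediate: picking $0\ne x\in I$ and letting $d$ be minimal with $x\in\U(\frak g_\infty)^{\le d}$, the image of $x$ in $(I\cap\U(\frak g_\infty)^{\le d})/(I\cap\U(\frak g_\infty)^{\le d-1})$ is non-zero, so $\gr I\ne 0$. That $\gr I$ is $\frak g_\infty$-stable is exactly the observation already recorded in Section~\ref{Spre} for an arbitrary ideal of $\U(\frak g_\infty)$: since $I$ is two-sided, $[\frak g_\infty,I]\subseteq I$, and this descends to $\{\frak g_\infty,\gr I\}\subseteq\gr I$ for the Poisson bracket on ${\bf S}^{\cdot}(\frak g_\infty)$ coming from the commutator; by the Leibniz rule this upgrades to $\{{\bf S}^{\cdot}(\frak g_\infty),\gr I\}\subseteq\gr I$, so $\gr I$ is in fact a genuine Poisson ideal. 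The same applies to $\rad(\gr I)$.

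For the codimension claim I would simply invoke the equivalences stated at the end of Section~\ref{Spre}: an ideal $I\subset\U(\frak g_\infty)$ is of locally finite codimension if and only if $\gr I$ is of locally finite codimension in ${\bf S}^{\cdot}(\frak g_\infty)$, if and only if $\Var(I)=0$. Taking contrapositives, $I$ of locally infinite codimension forces $\gr I$ of locally infinite codimension; since $\gr I$ and $\rad(\gr I)$ have the same zero-set, the same holds for $\rad(\gr I)$. This exhibits the desired non-zero Poisson ideal of locally infinite codimension.

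I do not expect a genuine obstacle here. The only point deserving a line of justification is the compatibility underpinning the ``locally finite codimension'' equivalence, namely that $(\gr I)\cap{\bf S}^{\cdot}(\frak g_n)=\gr(I\cap\U(\frak g_n))$ for every $n$; this is the standard fact that forming the associated graded with respect to the degree filtration commutes with intersecting against the subalgebras $\U(\frak g_n)\subset\U(\frak g_\infty)$, and it is in any case subsumed in the discussion preceding the proposition.
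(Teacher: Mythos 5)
Your proof is correct and follows essentially the same route as the paper: the statement is exactly a consequence of the discussion preceding it, namely that $\gr I$ (or $\rad(\gr I)$) is a non-zero $\frak g_\infty$-stable, hence Poisson, ideal of ${\bf S}^\cdot(\frak g_\infty)$, and that $I$ has locally finite codimension if and only if $\gr I$ does. Your added remark on the compatibility $(\gr I)\cap{\bf S}^\cdot(\frak g_n)=\gr(I\cap\U(\frak g_n))$ is the right point to flag and is indeed what the paper's equivalence rests on.
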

\section{Poisson ideals: statement of results}\label{Spintro}
Our first main result is the following theorem.
\begin{theorem}\label{Tlinf}If {\bf S}$^\cdot(\frak g_\infty)$ admits a non-zero Poisson ideal of locally infinite codimension, then $\frak g_\infty\cong\frak{sl}_\infty, \frak{so}_\infty$, $\frak{sp}_\infty$.\end{theorem}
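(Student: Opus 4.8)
The plan is to establish the contrapositive: if $\mathfrak g_\infty$ is not isomorphic to $\mathfrak{sl}_\infty$, $\mathfrak{so}_\infty$ or $\mathfrak{sp}_\infty$ --- equivalently, by the result of \cite{B}, \cite{BS} recalled in Section~\ref{Spre}, if $\mathfrak g_\infty$ is not finitary --- then every nonzero Poisson ideal of $\mathbf S^\cdot(\mathfrak g_\infty)$ has locally finite codimension. A Poisson ideal and its radical have the same zero-set, and in characteristic zero the radical of a Poisson ideal is again Poisson, so I may assume the ideal $J$ is radical. Then $J$ is stable under the adjoint action of $\mathfrak g_\infty$, hence under $G_\infty:=\cup_nG_n$, and --- applying the Nullstellensatz at each finite level --- $J$ is exactly the ideal of polynomial functions on $\mathfrak g_\infty^*$ vanishing on $Z:=\Var(J)$; thus radical Poisson ideals correspond bijectively to Zariski-closed $G_\infty$-stable subsets $Z\subseteq\mathfrak g_\infty^*$. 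Under this correspondence $J\ne0$ means $Z\subsetneq\mathfrak g_\infty^*$, and, by the discussion in Section~\ref{Spre}, $J$ of locally infinite codimension means $Z\ne\{0\}$. So the task is: when $\mathfrak g_\infty$ is not finitary, rule out a closed $G_\infty$-stable $Z$ with $\{0\}\subsetneq Z\subsetneq\mathfrak g_\infty^*$. (As a preliminary remark, $J$ cannot meet $\mathbf S^0(\mathfrak g_\infty)\oplus\mathbf S^1(\mathfrak g_\infty)=\mathbb F\oplus\mathfrak g_\infty$ in more than $\{0\}$: since $\mathfrak g_\infty$ is simple, a nonzero linear element of $J$ would generate $\mathfrak g_\infty$, hence the augmentation ideal, forcing $Z=\{0\}$.)

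Next I would descend to finite levels. Fix a sequence~(1) with $\mathfrak g_\infty=\cup_n\mathfrak g_n$ and put $Z^{(n)}:=\overline{\pr_{\mathfrak g_n}Z}\subseteq\mathfrak g_n^*$. Since contraction of a radical ideal corresponds to image-closure, $Z^{(n)}$ is the zero-set of the radical Poisson ideal $J\cap\mathbf S^\cdot(\mathfrak g_n)$, the restriction maps satisfy $Z^{(n)}=\overline{\pr_{\mathfrak g_n}(Z^{(n+1)})}$, and $Z=\varprojlim Z^{(n)}$; conversely every such ``coherent family'' of $G_n$-stable closed subvarieties arises from a radical Poisson ideal. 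From $\{0\}\subsetneq Z\subsetneq\mathfrak g_\infty^*$ one extracts an $N_1$ with $Z^{(n)}$ a \emph{proper, nonzero} $G_n$-stable closed subvariety of $\mathfrak g_n^*$ for all $n\ge N_1$, together with a point $\xi\in Z\setminus\{0\}$ such that $\pr_{\mathfrak g_n}(\xi)\ne0$ for all large $n$. By Borho's classification of $G_n$-stable closed subvarieties of $\mathfrak g_n\cong\mathfrak g_n^*$~\cite{Bor}, each irreducible component of $Z^{(n)}$ is the closure of a decomposition class, encoded by a Levi subalgebra $\mathfrak l_n\subseteq\mathfrak g_n$ and a nilpotent orbit in it, with ``size'' governed by $\dim\mathfrak z(\mathfrak l_n)$ (the semisimple directions) and by the nilpotent orbit.

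I would then first dispose of the semisimple directions: since $\mathbf S^\cdot(\mathfrak g_\infty)^{\mathfrak g_\infty}=\mathbb F$ (reflecting that the centre of $\U(\mathfrak g_\infty)$ is $\mathbb F$), a coherent family whose Levi data $\mathfrak l_n$ satisfied $\dim\mathfrak z(\mathfrak l_n)>0$ for infinitely many $n$ would yield, upon restricting invariants along the chain, a nonconstant global invariant --- impossible; this confines $Z$ to the locally nilpotent locus. The heart of the matter is then the claim: if $\mathfrak g_\infty$ is not finitary, then $\overline{\pr_{\mathfrak g_{n_0}}(G_\infty\cdot\eta)}=\mathfrak g_{n_0}^*$ for every $n_0\ge1$ and every $\eta\in\mathfrak g_\infty^*$ with $\pr_{\mathfrak g_{n_0}}(\eta)\ne0$. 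Granting this and applying it to $\eta=\xi$ and to an $n_0\ge N_1$ with $\pr_{\mathfrak g_{n_0}}(\xi)\ne0$, and using $G_\infty\cdot\xi\subseteq Z$, we get $\mathfrak g_{n_0}^*=\overline{\pr_{\mathfrak g_{n_0}}(G_\infty\cdot\xi)}\subseteq Z^{(n_0)}$, contradicting the properness of $Z^{(n_0)}$; this finishes the proof modulo the claim.

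Proving the claim is, I expect, the main obstacle. The idea: since $\mathfrak g_\infty$ is not finitary, for the fixed $n_0$ the embeddings $\mathfrak g_{n_0}\to\mathfrak g_m$ grow without bound as $m\to\infty$ --- in Baranov--Strade terms, $\mathfrak g_m$ as an adjoint $\mathfrak g_{n_0}$-module acquires either arbitrarily many nontrivial summands or summands of arbitrarily large highest weight --- so that $\mathfrak g_m$ contains enough ``independent'' copies of representations of $\mathfrak g_{n_0}$ that one can conjugate $\pr_{\mathfrak g_m}(\eta)$ by $G_m$ into an element whose image under the restriction $\mathfrak g_m^*\to\mathfrak g_{n_0}^*$ is regular semisimple with prescribed eigenvalues, and in fact is arbitrary; letting $m\to\infty$ and taking closures gives all of $\mathfrak g_{n_0}^*$. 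For $\mathfrak{sl}_\infty$, $\mathfrak{so}_\infty$ and $\mathfrak{sp}_\infty$ the embeddings stay bounded and this rigidity genuinely fails --- which is precisely why these three algebras do carry nonzero Poisson ideals of locally infinite codimension, whose zero-sets are computed later in the paper. Carrying out the conjugation estimate uniformly --- verifying it for each type of non-finitary embedding in the Baranov--Strade list, and organizing the two-stage reduction (semisimple, then nilpotent) cleanly --- is where the real work lies.
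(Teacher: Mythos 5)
Your setup matches the paper's: reduce to a radical Poisson ideal, descend to finite levels, and observe that the image of any $G_{n+m}$-orbit in $\Var(J_{n+m})$ inside $\frak g_n^*$ is forced into the proper subvariety $\Var(J_n)$, so orbit images are not dense. From there, however, the proof has two genuine gaps, and the second is where the paper does the real work by a different route.

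First, the step that ``confines $Z$ to the locally nilpotent locus'' is incorrect as argued. You claim that if the Levi data $\frak l_n$ have $\dim\frak z(\frak l_n)>0$ for infinitely many $n$, restricting invariants along the chain would produce a nonconstant element of ${\bf S}^\cdot(\frak g_\infty)^{\frak g_\infty}$. But invariants on a Borho sheet need not come from global invariants, and in fact the argument, if valid, would apply verbatim to $\frak g_\infty=\frak{sl}_\infty$, where ${\bf S}^\cdot(\frak{sl}_\infty)^{\frak{sl}_\infty}=\mathbb F$ and yet the Poisson ideals $J^{\le r}$ have zero-sets $\frak{sl}_\infty^{\le r}$ containing a dense family of semisimple elements. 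So the inference fails, and no reduction to nilpotent is available. The paper never attempts such a reduction: Proposition~\ref{Csl}, via Lemma~\ref{Lcgr} (passing to a finite covering of the orbit that fibers over $G/P$, i.e. over a Grassmannian $\Gr(^kr;V)$) and Vinberg's moment-map result (Proposition~\ref{Pvi}), treats semisimple, nilpotent and mixed orbits uniformly.

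Second, the ``heart of the matter'' --- your claim that non-finitary $\frak g_\infty$ forces $\overline{\pr_{\frak g_{n_0}}(G_\infty\cdot\eta)}=\frak g_{n_0}^*$ for every nonzero $\eta$ --- is only sketched, and the proposed sketch (checking a conjugation estimate case by case ``for each type of non-finitary embedding in the Baranov--Strade list'') is not available in the stated generality: there is no classification of arbitrary (non-diagonal) locally simple Lie algebras, so there is no list to run through. What the paper does instead is purely quantitative and classification-free at this stage. From non-density of the orbit image, Proposition~\ref{Csl} gives that $\dim(\frak g_n\cdot V_{n+m})$ is bounded in $m$ (where $V_{n+m}$ is the natural module of $\frak g_{n+m}$), which bounds the number and dimensions of nontrivial $\frak g_n$-constituents of $V_{n+m}$ and hence the Dynkin index $\I_{\frak g_n}^{\frak g_{n+m}}$. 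Multiplicativity then forces $\I_{\frak g_n}^{\frak g_{n+1}}=1$ for $n\gg0$, and \cite[Corollary~2.4]{DP} yields $\frak g_\infty\cong\frak{sl}_\infty,\frak{so}_\infty,\frak{sp}_\infty$. The Dynkin-index criterion of Dimitrov--Penkov is the ingredient that lets the paper conclude finitary-ness without any classification of the Lie algebra $\frak g_\infty$ itself; your proposal lacks a substitute for it.
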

\begin{corollary}\label{Ctlinf}If $\U(\frak g_\infty)$ admits a non-zero ideal of locally infinite codimension, then $\frak g_\infty\cong\frak{sl}_\infty, \frak{so}_\infty$, $\frak{sp}_\infty$.\end{corollary}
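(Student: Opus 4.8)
The plan is to deduce the corollary immediately from Theorem~\ref{Tlinf} and Proposition~\ref{Clfcd}. Suppose $\U(\frak g_\infty)$ contains a non-zero ideal $I$ of locally infinite codimension. First I would invoke the reasoning behind Proposition~\ref{Clfcd}: the degree filtration on $\U(\frak g_\infty)$ induces a filtration on $I$, and the associated graded $\gr I$ is a non-zero ideal of $\gr\U(\frak g_\infty)={\bf S}^\cdot(\frak g_\infty)$ --- non-zero because a non-zero filtered subspace always has non-zero associated graded. Since $I$ is two-sided we have $[\U(\frak g_\infty),I]\subset I$, and as the Poisson bracket on ${\bf S}^\cdot(\frak g_\infty)$ is the symbol of the commutator this forces $\{{\bf S}^\cdot(\frak g_\infty),\gr I\}\subset\gr I$; equivalently $\gr I$ is $\frak g_\infty$-stable, hence Poisson in the sense of Section~\ref{Spre}. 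Finally, as recorded there, $\gr I$ is of locally infinite codimension in ${\bf S}^\cdot(\frak g_\infty)$ precisely because $I$ is of locally infinite codimension in $\U(\frak g_\infty)$ (both being equivalent to $\Var(I)\neq 0$). Thus ${\bf S}^\cdot(\frak g_\infty)$ admits a non-zero Poisson ideal of locally infinite codimension, and Theorem~\ref{Tlinf} yields $\frak g_\infty\cong\frak{sl}_\infty,\frak{so}_\infty,\frak{sp}_\infty$. One may equally work with $\rad(\gr I)$ in place of $\gr I$, since passing to the radical does not change the zero-set $\Var(I)$ and hence preserves both the non-vanishing and the Poisson property.

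Consequently there is essentially no obstacle in the corollary itself: its whole content is carried by Theorem~\ref{Tlinf}, and the argument is a single application of Proposition~\ref{Clfcd} followed by Theorem~\ref{Tlinf}. I would therefore write the proof of the corollary in one or two lines and reserve all further effort for Theorem~\ref{Tlinf}, where the genuine difficulty lies: showing that no locally simple $\frak g_\infty$ outside the list $\frak{sl}_\infty,\frak{so}_\infty,\frak{sp}_\infty$ can carry a non-zero Poisson ideal of locally infinite codimension, and conversely exhibiting such ideals for these three algebras. That, and not the present corollary, is the step I expect to be hard.
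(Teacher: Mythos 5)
Your proof is correct and matches the paper's own argument exactly: both derive the corollary by applying Proposition~\ref{Clfcd} to produce a non-zero Poisson ideal of locally infinite codimension in ${\bf S}^\cdot(\frak g_\infty)$ and then invoking Theorem~\ref{Tlinf}. The additional detail you supply about why $\gr I$ is non-zero, Poisson, and of locally infinite codimension is really a justification of Proposition~\ref{Clfcd} itself, which the paper records separately in Section~\ref{Spre}.
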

\begin{proof}The algebra {\bf S}$^\cdot(\frak g_\infty)$ admits a non-zero Poisson ideal $J$ of locally infinite codimension by Proposition~\ref{Clfcd}, hence $\frak g_\infty\cong\frak{sl}_\infty,$ $\frak{so}_\infty$, $\frak{sp}_\infty$ by Theorem~\ref{Tlinf}.\end{proof}
Fix now a Lie algebra $\frak g_\infty=\frak{sl}_\infty, \frak{so}_\infty$, $\frak{sp}_\infty$ together with a chain (1) such that $\lim\limits_{\to}\frak g_n=\frak g_\infty$. Without loss of generality we assume that for $n\ge 3$ all $\frak g_n$ are simple and of the same type A, B, C, or D, and that $\rk\frak g_n=n$. By $V_n$ we denote a natural representation of $\frak g_n$ (for $\frak g_n$ of type A there are two choices of $V_n$ up to isomorphism). We further assume that, for $n\ge 3$, $V_{n+1}$ considered as a $\frak g_n$-module is isomorphic to $V_n$ plus a trivial module.

Set
\begin{center}\hspace{27pt}$\frak g_n^{\le r}:=\{x\in\frak g_n\mid$ there exists
$\lambda\in\mathbb F$ such that $\rk (X-\lambda$Id$_{V_n})\le r\}$,
\hspace{27pt}(2)\end{center}where $X$ is considered as a linear
operator on $V_n$. Note that $\frak g_n^{\le r}$ is a Zariski closed
subset of $\frak g_n$. Choosing compatible identifications $\frak
g_n\cong\frak g_n^*$, we can assume that $\frak g_n^{\le
r}\subset\frak g_n^*$. Furthermore, for $\frak
g_\infty\cong\frak{sl}_\infty, \frak{so}_\infty, \frak{sp}_\infty$
one can check directly that the projection $\frak g_{n+1}^*\to\frak
g_n^*$ maps $\frak g_{n+1}^{\le r}$ surjectively onto $\frak
g_n^{\le r}$. This yields a well-defined limit of algebraic
varieties $\varprojlim \frak g_n^{\le r}$ which we denote by $\frak
s_\infty^{\le r}$, where $\frak s$ is an abbreviation for
$\frak{sl}, \frak{so}$ or $\frak{sp}$.

The radical ideals $J_n^{\le r}$ of ${\bf S}^\cdot(\frak g_n)$ with
respective zero-sets $\frak g_n^{\le r}\subset\frak g_n^*$ form a
chain whose union we denote by $J^{\le r}$. The ideal $J^{\le r}$ is
a radical Poisson ideal of {\bf S}$^\cdot(\frak g_\infty)$. It turns
out that any non-zero radical Poisson ideal of {\bf S}$^\cdot(\frak
g_\infty)$ is of this form.

\begin{theorem}\label{Tlinfrk}Let $\frak g_\infty=\frak{sl}_\infty, \frak{so}_\infty, \frak{sp}_\infty$ and $J\subset{\bf\mathrm S}^\cdot(\frak g_\infty)$ be a non-zero radical Poisson ideal. Then $J=J^{\le r}$ for some $r\in\mathbb Z_{\ge0}$.\end{theorem}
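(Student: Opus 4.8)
The plan is to pass from $J$ to its zero-sets at each finite level and to deduce everything from the compatibility of the restriction maps $\pr_n\colon\frak g_{n+1}^*\to\frak g_n^*$ together with a single geometric lemma on images of adjoint orbits. Since $J$ is radical, each $J_n:=J\cap{\bf S}^\cdot(\frak g_n)$ is a radical ideal of the polynomial algebra ${\bf S}^\cdot(\frak g_n)$, and since $J$ is Poisson, $J_n$ is $\operatorname{ad}\frak g_n$-stable, hence $G_n$-stable, where $G_n$ is the adjoint group of $\frak g_n$. By the Nullstellensatz $J_n=\mathcal I(Z_n)$ for the $G_n$-stable Zariski-closed set $Z_n:=\Var(J_n)\subseteq\frak g_n^*$, we have $J=\bigcup_n\mathcal I(Z_n)$, and, as in Section~\ref{Spre}, $Z_n=\overline{\pr_n(Z_{n+1})}$ for all $n$. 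As $J\ne0$ and the $J_n$ increase, $J_n\ne0$, hence $Z_n\subsetneq\frak g_n^*$, for all $n\ge N$, some $N$. For $x\in\frak g_n^*\cong\frak g_n\subseteq\frak{gl}(V_n)$ put $\rho(x):=\min_{\lambda\in\mathbb F}\rk(x-\lambda\Id_{V_n})$, so that $\frak g_n^{\le r}=\{\rho\le r\}$ and $\rho$ is $G_n$-invariant; it suffices to prove $Z_n=\frak g_n^{\le r}$ for all $n$ and one fixed $r\in\mathbb Z_{\ge0}$, since then $J=\bigcup_n\mathcal I(\frak g_n^{\le r})=\bigcup_nJ_n^{\le r}=J^{\le r}$.

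Put $r_n:=\max\{\rho(x)\mid x\in Z_n\}\in\mathbb Z_{\ge0}$ (finite since $\dim V_n<\infty$, and $Z_n\ne\emptyset$ as $J_n$ is proper). First, $r_n\le r_{n+1}$: for $x'\in Z_{n+1}$ and any scalar $\nu$ the $V_n$-block of $x'-\nu\Id$ is a submatrix of $x'-\nu\Id$, so (after the trace correction occurring in the $\frak{sl}$ case) $\rho(\pr_n(x'))\le\rho(x')\le r_{n+1}$; hence $\pr_n(Z_{n+1})\subseteq\frak g_n^{\le r_{n+1}}$, and as this set is closed, $Z_n=\overline{\pr_n(Z_{n+1})}\subseteq\frak g_n^{\le r_{n+1}}$. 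Second, $Z_n\supseteq\frak g_n^{\le r_{n+1}}$: pick $x'\in Z_{n+1}$ with $\rho(x')=r_{n+1}$; since $Z_{n+1}$ is closed and $G_{n+1}$-stable, $Z_{n+1}\supseteq\overline{G_{n+1}\cdot x'}$, whence $Z_n=\overline{\pr_n(Z_{n+1})}\supseteq\overline{\pr_n(G_{n+1}\cdot x')}=\frak g_n^{\le r_{n+1}}$ by the Key Lemma below. Hence $Z_n=\frak g_n^{\le r_{n+1}}$, which is a proper subset of $\frak g_n^*$, so $\frak g_n^{\le r_{n+1}}\ne\frak g_n^{\le r_{n+1}-1}$ (the chain $\frak g_n^{\le0}\subsetneq\frak g_n^{\le1}\subsetneq\cdots\subsetneq\frak g_n^*$ being strictly increasing below $\frak g_n^*$), so $\rho$ takes the value $r_{n+1}$ on $Z_n$ and $r_n\ge r_{n+1}$; with the first inequality, $r_n=r_{n+1}$. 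Thus $r_n$ is constant for $n\ge N$, say equal to $r$, and $Z_n=\frak g_n^{\le r}$ for $n\ge N$; for $n<N$ we get $Z_n=\overline{\pr_n(Z_N)}=\overline{\pr_n(\frak g_N^{\le r})}=\frak g_n^{\le r}$, using the surjectivity of $\frak g_{k+1}^{\le r}\twoheadrightarrow\frak g_k^{\le r}$ from Section~\ref{Spintro} (so the composite image is the closed set $\frak g_n^{\le r}$). Hence $Z_n=\frak g_n^{\le r}$ for all $n$, as needed.

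It remains to prove the Key Lemma: \emph{for $\frak g_\infty=\frak{sl}_\infty,\frak{so}_\infty,\frak{sp}_\infty$, every $n$, and every $x'\in\frak g_{n+1}^*$ with $\rho(x')=r$, one has $\overline{\pr_n(G_{n+1}\cdot x')}=\frak g_n^{\le r}$.} The inclusion $\subseteq$ is immediate from $G_{n+1}$-invariance of $\rho$ and the submatrix estimate above. For $\supseteq$ I would work in block form with respect to $V_{n+1}=V_n\oplus L$, $\dim L=1$ (resp.\ $2$ in the symplectic case): writing $\rk(x'-\lambda\Id)=r$ and $M':=x'-\lambda\Id$ (so $\rk M'=r$), decompose $M'$ into $r$ rank-one matrices, freely in type A and compatibly with the defining bilinear form of $\frak g_{n+1}$ in types B, C, D; say $M'=\sum_{i\le r}a_ib_i^{\top}$. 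Conjugation of $x'$ by $g\in G_{n+1}$ replaces each $a_i,b_i$ by $ga_i,g^{-\top}b_i$, and the $V_n$-block of $g(x'-\lambda)g^{-1}$ equals $\sum_i(ga_i)'\bigl((g^{-\top}b_i)'\bigr)^{\top}$, where $'$ denotes truncation to the $V_n$-coordinates. A dimension count then shows that, as $g$ ranges over $G_{n+1}$, these truncated vectors sweep out — subject to the pairing and isotropy constraints in types B, C, D — a Zariski-dense subset of the relevant space of $r$-tuples, so the $V_n$-blocks sweep out a dense subset of the corank-$r$ locus; undoing the scalar/trace correction identifies the closure with $\frak g_n^{\le r}$, which is irreducible. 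I expect this lemma to be the crux. The orthogonal and symplectic cases demand careful bookkeeping of the bilinear form — isotropy of eigenspaces, forced parities of ranks, the precise shape of $\frak g_n\hookrightarrow\frak g_{n+1}$, and the distinction between the $\lambda=0$ regime and the large-$r$ regime — and one must verify that the needed genericity is attainable for an \emph{arbitrary} $x'$ with $\rho(x')=r$, not only for a generic such $x'$. Should a single step prove too tight, I would replace $\frak g_{n+1}$ by $\frak g_m$ with $m\gg n$ (which only enlarges $G_{n+1}\cdot x'$, and $\overline{\pr_n(Z_m)}=Z_n$ holds just as well), gaining room to spread out the rank-$r$ data and thereby reach every rank-$\le r$ matrix of $\frak g_n$ in the limit.
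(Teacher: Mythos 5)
Your strategy is essentially the paper's: pass from $J$ to the zero-sets $Z_n=\Var(J_n)$, recognize that each $Z_n$ must lie in a rank-bounded locus $\frak g_n^{\le r}$, and then show that the closure of the projection of a suitable coadjoint orbit of a higher level fills up all of $\frak g_n^{\le r}$, forcing $Z_n=\frak g_n^{\le r}$. The organization into an invariant $\rho$, a ``Key Lemma,'' and the surjectivity of $\frak g_{k+1}^{\le r}\twoheadrightarrow\frak g_k^{\le r}$ is the same skeleton that the paper uses via Lemmas~\ref{Lsl1}, \ref{Lsl2}, and~\ref{Lrkmaxsp}.

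However, there is a genuine gap, and it is exactly where you flag your own uncertainty. Your Key Lemma asserts $\overline{\pr_n(G_{n+1}\cdot x')}=\frak g_n^{\le r}$ for \emph{every} $x'\in\frak g_{n+1}^*$ with $\rho(x')=r$, i.e.\ a single-step projection. The paper never proves (and almost certainly cannot prove) this: the density statement it actually establishes, Lemma~\ref{Lsl2}, requires $d_V\ge 2d_W$ \emph{and} $d_W>r$, and the bounding statement, Lemma~\ref{Lsl1}, requires $d_V>3d_W$. In other words, one must project from $\frak g_m$ with $m$ much larger than $n$, and the argument needs both a preliminary, uniform-in-$m$ rank bound (from Lemma~\ref{Lsl1}) to guarantee that the $\rho$-value $r$ eventually satisfies $r<d_{V_n}$, and a careful rank-reduction / Jordan-type analysis (Lemmas~\ref{Lrr}, \ref{Leig0}, \ref{Lrkspo}--\ref{Lrkspe}) to handle the $\frak{so}$ and $\frak{sp}$ isotropy and parity constraints you mention. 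Your ``dimension count'' paragraph is a plan rather than a proof of the crux, and as written it does not address (i) that the Key Lemma must hold for an \emph{arbitrary} $x'$ with $\rho(x')=r$ — including non-generic nilpotent Jordan types, which is where the hard case analysis lives — and (ii) that without an a priori bound on $r_{n+1}$ in terms of $n$ alone (supplied in the paper by Lemma~\ref{Lsl1}), the density half of the Key Lemma cannot be applied in the regime where $\frak g_n^{\le r}$ is still proper. You would need to import both the stabilization to $m\gg n$ and a version of Lemma~\ref{Lsl1} to make the chain of equalities $r_n=r_{n+1}=\cdots$ go through honestly.
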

Theorems~\ref{Tlinf} and~\ref{Tlinfrk} are proved in
Section~\ref{Ssch}.
\section{Auxiliary results}\label{Sppsl}
\label{Sco} Throughout the paper $V$ denotes a finite-dimensional vector space of dimension $d_V$. In order to consider all simple classical groups simultaneously, we
use S (respectively, $\frak s$) as an abbreviation for SL, SO, Sp (respectively, $\frak{sl}, \frak{so}$,
$\frak{sp}$) and consider three different cases. In the case S=SL we fix the zero bilinear form on the space $V$ and set S$(V):=$SL$(V)$,
$\frak s(V):=\frak{sl}(V)$. In the case S=SO we fix a nondegenerate symmetric bilinear form on $V$ and set S$(V):=$SO$(V)$, $\frak s(V):=\frak{so}(V)$. In the case
S=Sp we assume that $d_V$ is even and fix a
nondegenerate antisymmetric bilinear form on $V$. Then S$(V):=$Sp$(V)$, $\frak
s(V):=\frak{sp}(V)$.

In this section $G$ denotes a connected simple subgroup of S$(V)$ with Lie algebra $\frak g\subset\frak s(V)$. We start with some general statements about $G$-orbits in $\frak g^*$ (i.e. about coadjoint orbits). We identify $\frak g$ and $\frak g^*$ via the Cartan-Killing form. Fix $e\subset\frak g^*$. Denote the $G$-orbit of $e$ in $\frak g^*$ by $\EuScript O[e]$. Let $\underline{\EuScript O}[e]$ be the unique closed $G$-orbit  of the closure $\overline{\EuScript O[e]}$ in $\frak g^*$. By the Luna Slice Theorem~\cite{VP}, there exists a $G$-equivariant morphism $\EuScript O[e]\to\underline{\EuScript O}[e]$. Assume that $\underline{\EuScript O}[e]\ne 0$, i.e. $e$ is not nilpotent. Let $h\in\underline{\EuScript O}[e]$ be a semisimple element. Then the centralizer of $h$ in $\frak g$ is a Levi subalgebra~\footnote{Under a Levi subalgebra of a parabolic subalgebra $\frak p\subset\frak g$ we understand a maximal reductive in $\frak g$ subalgebra of $\frak p$.} of some parabolic subalgebra $\frak p\subset\frak g$. Let $P\subset G$ be a parabolic subgroup with Lie algebra $\frak p$. Then there exists a finite $G$-equivariant covering $\tilde{\EuScript O}\to\EuScript O[e]$ and a $G$-equivariant morphism $\tilde{\EuScript O}\to G/P$.

Assume now that $\underline{\EuScript O}[e]=\{0\}$, i.e. that $e$ is nilpotent. By the Jacobson-Morozov Theorem there exist elements $h, f$ such that $$[h, e]=2e, [h, f]=-2f, [e, f]=h,$$ i.e. such that $\{ e, h, f\}$ is an $\frak{sl}_2$-triple. The element $h$ is rational semisimple. Hence $\frak g$ splits into the direct sum of ad$h$-eigenspaces $\oplus_{i\in\mathbb Q}\frak g_i$ with rational eigenvalues. The direct sum $\frak g_h^+:=\oplus_{i\ge 0}\frak g_i\subset\frak g$ is a parabolic subalgebra of $\frak g$ and we denote it by $\frak p_e$. The subalgebra $\frak p_e$ is determined by $e$. By $P_e\subset G$ we denote the parabolic subgroup with the Lie algebra $\frak p_e$. There is a $G$-equivariant morphism $\EuScript O[e]\to G/P_e$.

The above discussion is summarized in the following lemma.
\begin{lemma}\label{Lcgr}A suitable finite covering of any coadjoint orbit
admits a $G$-equivariant (and thus surjective) morphism to $G/P$,
where $P$ is a maximal parabolic subgroup.\end{lemma}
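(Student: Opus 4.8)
The plan is to reduce the statement to the two case-constructions carried out in the paragraphs immediately preceding the lemma, and then to promote the parabolic subgroup produced there to a maximal one. So, given a coadjoint orbit $\EuScript O[e]\subset\frak g^*$, I would first set aside the degenerate case $e=0$, in which $\EuScript O[e]$ is a point and there is nothing to prove, and then split into the two cases exactly as above. In the case where $e$ is not nilpotent, the Luna Slice Theorem together with the fact that the centralizer of a semisimple element is a Levi subalgebra gives a finite $G$-equivariant covering $\tilde{\EuScript O}\to\EuScript O[e]$ and a $G$-equivariant morphism $\tilde{\EuScript O}\to G/P$, where $P$ is the parabolic whose Levi is the centralizer of a nonzero semisimple element $h\in\underline{\EuScript O}[e]$. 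In the case where $e$ is nilpotent, the Jacobson-Morozov triple $\{e,h,f\}$ produces the parabolic $\frak p_e=\oplus_{i\ge 0}\frak g_i$ together with a $G$-equivariant morphism $\EuScript O[e]\to G/P_e$, so one may take $\tilde{\EuScript O}=\EuScript O[e]$. The point to record is that in both cases the parabolic $P$ is \emph{proper}: when $e$ is not nilpotent because $h\ne0$ forces the centralizer of $h$ in $\frak g$ to be a proper Levi subalgebra, and when $e$ is nilpotent because $f\in\frak g_{-2}$ shows that $\frak p_e=\oplus_{i\ge 0}\frak g_i\ne\frak g$.

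The operative step is then the passage to a maximal parabolic. I would invoke the standard fact that every proper parabolic subgroup of the simple group $G$ is contained in some maximal parabolic subgroup $Q\subsetneq G$, and that the canonical projection $G/P\to G/Q$ is a $G$-equivariant surjective morphism of projective varieties. Composing $\tilde{\EuScript O}\to G/P$ with $G/P\to G/Q$ yields a $G$-equivariant morphism from the finite covering $\tilde{\EuScript O}$ of $\EuScript O[e]$ to $G/Q$ with $Q$ a maximal parabolic, which is the assertion.

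For the parenthetical ``and thus surjective'' I would note that a connected finite covering of a $G$-orbit is again $G$-homogeneous: every $G$-orbit inside it is of full dimension, hence open, so connectedness leaves only one orbit. Choosing $\tilde{\EuScript O}$ connected, the image of our $G$-equivariant morphism is then a nonempty $G$-stable subset of the homogeneous space $G/Q$, hence all of $G/Q$.

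I do not anticipate a real obstacle here, since the content of the two case-constructions is already available above; the only genuinely new input is the elementary fact that a proper parabolic embeds into a maximal one, together with the functoriality of $G/P\to G/Q$. If there is a subtlety, it is the bookkeeping needed to ensure that the parabolic produced in each case is proper — so that a maximal $Q\supseteq P$ with $Q\ne G$ exists — and that the covering $\tilde{\EuScript O}$ may be chosen connected so that the surjectivity remark goes through.
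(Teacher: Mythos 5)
Your proposal is correct and takes essentially the same route as the paper, which states the lemma as a summary of the preceding two-case discussion (Luna slice theorem for non-nilpotent $e$, the Jacobson--Morozov parabolic $\frak p_e$ for nilpotent $e\ne 0$). The only additions you make — verifying that the parabolic obtained is proper, composing with the projection $G/P\to G/Q$ onto a maximal parabolic containing it, and the surjectivity remark via $G$-equivariance onto the homogeneous space $G/Q$ — are exactly the routine steps the paper leaves implicit, so there is no gap.
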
 For $\frak
g=\frak{sl}(V)$, a quotient $G/P$ is nothing but a Grassmannian
Gr$(r; V)$ for some $r<d_V$. For $\frak g=\frak{so}(V)$ and
$\frak{sp}(V)$ a quotient $G/P$ is an irreducible component of the
variety Gr$(^0r; V)$ of isotropic subspaces in $V$ of dimension $r$
for some $r\le \frac{d_V}2$. The variety Gr$(^0r; V$) is irreducible
unless $\frak g=\frak{so}(V)$ and $r=\frac{d_V}2$. In this latter
case Gr$(^0r; V)$ has two irreducible components which are
isomorphic as varieties. More generally, for $k<r<d_V$, we denote by
Gr$(^kr; V)$ the variety of subspaces of $V$ of dimension $r$ on
which the restriction of the fixed form on $V$ has rank $k$.


For $X\in$End$V$ we denote by $V^X_\lambda\subset V$ the generalized
eigenspace of $X$ with eigenvalue $\lambda\in\mathbb F$.
Furthermore, $\frak g\cdot V$ stands for the sum of the non-trivial
simple $\frak g$-submodules of $V$.
\begin{proposition}\label{Csl}Assume that there is an S$(V)$-orbit
$\EuScript O$ in $\frak s(V)^*$ such that its image in $\frak g^*$
under the canonical projection $\frak s(V)^*\to\frak g^*$ is not
dense. Then\begin{center}dim$(\frak g\cdot
V)<$2(dim$G$--$\rk G$)($\rk G$+1) or 2dim$G+2\ge
d_V$.\end{center}\end{proposition}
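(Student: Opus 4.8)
The plan is to translate the hypothesis — that the image of some $\mathrm{S}(V)$-orbit $\EuScript{O}$ in $\frak g^*$ under the projection $\frak s(V)^*\to\frak g^*$ is not dense — into a dimension inequality, and to do so by separating two cases according to whether a representative $e\in\EuScript O$ is nilpotent in $\frak s(V)$ or not. First I would fix $e\in\EuScript O$ and use Lemma~\ref{Lcgr}: a finite covering of $\EuScript O$ admits a surjective $\mathrm{S}(V)$-equivariant morphism onto a quotient $\mathrm{S}(V)/Q$ with $Q$ a maximal parabolic, so $\mathrm{S}(V)/Q$ is a Grassmannian $\mathrm{Gr}(r;V)$ (type A) or a component of an isotropic Grassmannian $\mathrm{Gr}({}^0r;V)$ (types B, C, D). The key point is that if the image of $\EuScript O$ in $\frak g^*$ is \emph{not} dense, then (by $G$-equivariance and the fact that $G$ acts on $V$) the induced $G$-action on $\mathrm{S}(V)/Q$, i.e. on the relevant Grassmannian of $r$-dimensional subspaces of $V$, must fail to have a dense orbit — otherwise one could pull back a dense $G$-orbit and contradict non-density of the image in $\frak g^*$. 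So the problem reduces to: if $G\subset\mathrm{S}(V)$ acts on $\mathrm{Gr}(r;V)$ (or an isotropic analog) \emph{without} a dense orbit, bound $\dim(\frak g\cdot V)$.

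Second, I would extract the dimension count from the failure of a dense orbit. The largest $G$-orbit on $\mathrm{Gr}(r;V)$ has dimension at most $\dim G$ (actually at most $\dim G - \dim(\text{generic stabilizer})$), while $\dim\mathrm{Gr}(r;V)=r(d_V-r)$; a dense orbit would force $\dim G\ge r(d_V-r)$, so its absence combined with the constraints on $r$ coming from the structure of maximal parabolics (namely $1\le r< d_V$, or $r\le d_V/2$ in the isotropic cases) should be pushed to yield the stated bound. I expect the bookkeeping to go: either $\frak g\cdot V$ is ``small'' — here one bounds $\dim(\frak g\cdot V)$ using that $\frak g\cdot V$ is a sum of nontrivial simple $\frak g$-submodules of $V$, and a nontrivial simple module for a reductive $\frak g$ of rank $\rk G$ occurring inside the tiny space $V$ forces a bound like $\dim(\frak g\cdot V)<2(\dim G-\rk G)(\rk G+1)$ (this is the generic ``small representations are close to the adjoint/standard ones'' estimate) — or else $V$ is genuinely large compared to $\frak g$, i.e. $2\dim G+2\ge d_V$, which is exactly the second alternative. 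The dichotomy should fall out of comparing $r(d_V-r)$ with $\dim G$ at the two ``ends'' $r$ small versus $r$ near $d_V/2$.

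The main obstacle, I expect, is the middle step: converting ``no dense $G$-orbit on the Grassmannian'' into the precise numerical alternative, and in particular handling the nilpotent case of Lemma~\ref{Lcgr} uniformly with the semisimple case. In the semisimple (non-nilpotent) case the orbit $\EuScript O[e]$ fibers, up to finite covering, over $G/P$ with $P$ the stabilizer of a semisimple element, and one must check that non-density of the image in $\frak g^*$ really does propagate to non-density of the $G$-action on $G/P$ and hence on the Grassmannian; in the nilpotent case one uses the parabolic $\frak p_e=\frak g_h^+$ attached to an $\frak{sl}_2$-triple and the morphism $\EuScript O[e]\to G/P_e$, and again needs non-density downstairs. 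I would handle both by the same mechanism: the $\mathrm{S}(V)$-equivariant map from a finite cover of $\EuScript O$ to $\mathrm{S}(V)/Q$, restricted to $G$; the image of $\EuScript O$ in $\frak g^*$ being dense would be equivalent to a dense $G$-orbit in $\mathrm{S}(V)/Q$ because the fibers are $G$-homogeneous-up-to-the-center-and-unipotent-radical-data. Once that equivalence is in hand, the remaining inequalities are a finite check over the list of maximal parabolics of the classical groups, exactly the sort of calculation I would not grind through here, but which produces the stated bound $\dim(\frak g\cdot V)<2(\dim G-\rk G)(\rk G+1)$ or $2\dim G+2\ge d_V$.
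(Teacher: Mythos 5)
Your overall shape --- use Lemma~\ref{Lcgr} to map a finite cover of $\EuScript O$ onto a (possibly isotropic) Grassmannian and then bound things via the $G$-action there --- matches the paper, but there is a genuine gap at the pivotal step. You convert ``the image of $\EuScript O$ in $\frak g^*$ is not dense'' into ``$G$ has no dense orbit on $\mathrm{S}(V)/Q$'', justified only by the vague claim that the fibers are ``$G$-homogeneous up to center and unipotent radical data''; this is not correct, and in any case it is the wrong condition. The mechanism the paper uses is Vinberg's moment-map theorem (Proposition~\ref{Pvi}): viewing $\EuScript O\to\frak g^*$ as a moment map, the dimension of the image equals $m_G(\EuScript O)$, the maximal dimension of a $G$-orbit on $\EuScript O$, so non-density of the image is equivalent to $m_G(\EuScript O)<\dim G$; surjectivity of the $\mathrm S(V)$-equivariant map from a finite cover of $\EuScript O$ onto $\Gr({}^0r;V)$ then gives $m_G(\Gr({}^0r;V))<\dim G$, i.e.\ positive-dimensional generic stabilizers on the Grassmannian. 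Absence of a dense orbit is a strictly weaker statement (and does not follow from your ``pull back a dense orbit'' argument, since the map goes from the cover of $\EuScript O$ to the Grassmannian and $\EuScript O$ never carries a dense $G$-orbit anyway); it is the stabilizer condition $m_G(\cdot)<\dim G$, not the no-dense-orbit condition, that the subsequent bounds require.

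Second, the numerical alternative is not a ``finite check over the list of maximal parabolics'' and cannot be extracted by comparing $\dim G$ with $r(d_V-r)$: $G$ is an arbitrary connected simple subgroup of $\mathrm S(V)$, not one of finitely many groups. The first alternative $\dim(\frak g\cdot V)<2(\dim G-\rk G)(\rk G+1)$ is precisely the Andreev--Popov theorem (Lemma~\ref{Lpo}~a)), whose hypothesis is $m_G(\mathbb P(V))<\dim G$; the second alternative $2\dim G+2\ge d_V$ comes from geometric arguments with isotropic subspaces: Lemma~\ref{Lpo}~b) when $m_G(\Gr({}^01;V))<\dim G$ but $m_G(\mathbb P(V))=\dim G$ (which forces $\frak s=\frak{so}$), and Lemma~\ref{LGr} (an intersection/tangent-space argument resting on Lemma~\ref{Lla1}) when $m_G(\Gr({}^01;V))=\dim G$ and $r\ge2$. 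Your proposal neither supplies these ingredients nor arrives at the hypotheses under which they apply, so as written it does not prove the proposition.
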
 In order to prove
Proposition~\ref{Csl} we need several preliminary statements.
\begin{lemma}\label{Lla1}\label{Llaso}Let $V'$ be any subspace in $V$ of dimension $d_{V'}$.\\a) Assume that $\frak s=\frak{sl}$, $\frak{sp}$. If $\dim(V^r\cap V')\ge 1$ for any isotropic subspace $V^r\subset V$
of dimension $r$, then $r+d_{V'}>d_V$.\\b) Assume that $\frak
s=\frak{so}$ and $r<\frac{d_V}{2}$. If $\dim(V^r\cap V')\ge 1$ for
any isotropic subspace $V^r\subset V$ of dimension $r$, then
$r+d_{V'}>d_V$.\\c) Assume that $\frak s=\frak{so}$, $d_V$ is even,
and $r=\frac{d_V}{2}$. If $\dim(V^r\cap V')\ge 1$ for any isotropic
subspace $V^r\subset V$ of dimension $r$ from some irreducible
component of Gr$(^0r; V)$, then either $r+d_{V'}>d_V$, or
$r+d_{V'}$=$d_V$ and $V'$ is isotropic.\end{lemma}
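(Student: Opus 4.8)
The plan is to prove the contrapositive of each assertion: assuming the stated dimension inequality fails in the precise sense of the statement, I will produce an isotropic $r$-dimensional subspace $V^r\subset V$ — lying in the prescribed irreducible component of $\Gr(^0r;V)$ in case c) — with $V^r\cap V'=0$. Throughout I assume, as is implicit, that such subspaces exist, i.e. $r\le\lfloor d_V/2\rfloor$ for $\frak s=\frak{so},\frak{sp}$ and $r\le d_V$ for $\frak s=\frak{sl}$. The case $\frak s=\frak{sl}$ of a) is immediate, every subspace being isotropic: if $r+d_{V'}\le d_V$, a complement $W$ of $V'$ in $V$ has $\dim W=d_V-d_{V'}\ge r$, so any $r$-dimensional subspace of $W$ meets $V'$ only in $0$.

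For $\frak s=\frak{sp}$ (part a)) and $\frak s=\frak{so}$ (parts b) and c)) I would run an incidence-variety dimension count. Let $X$ be the irreducible and projective variety of isotropic $r$-subspaces of $V$ — one of the two isomorphic irreducible components of $\Gr(^0r;V)$ in the orthogonal Lagrangian case $d_V=2m$, $r=m$, and $\Gr(^0r;V)$ itself otherwise — so that
\[
\dim X=r(d_V-r)-\binom{r}{2}\ \text{ for }\frak{sp},\qquad \dim X=r(d_V-r)-\binom{r+1}{2}\ \text{ for }\frak{so},
\]
each orthogonal Lagrangian component having dimension $\binom{m}{2}$. Let $Q\subset\mathbb P(V')$ be the set of isotropic lines in $V'$: this is all of $\mathbb P(V')$, of dimension $d_{V'}-1$, when $\frak s=\frak{sp}$ or when $\frak s=\frak{so}$ and $V'$ is totally isotropic, and is a proper quadric, of dimension $d_{V'}-2$, otherwise. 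Consider the closed subvariety
\[
Z=\{(W,[v])\in X\times Q\mid v\in W\}\subset X\times\mathbb P(V).
\]
Projecting to $Q$: the fibre over $[v]$ is the variety of isotropic $r$-subspaces of $V$ containing $\mathbb Fv$, which $W\mapsto W/\mathbb Fv$ identifies with (a component of) the isotropic $(r-1)$-Grassmannian of the nondegenerate $(d_V-2)$-dimensional space $v^\perp/\mathbb Fv$ — a projective variety whose dimension depends only on $d_V,r$ and $\frak s$; hence $\dim Z\le\dim Q+\dim(\text{fibre})$.

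A short computation with the dimension formulas gives $\dim X-\dim(\text{fibre})=d_V-r$ for $\frak{sp}$ and $=d_V-1-r$ for $\frak{so}$ (the latter also covering the Lagrangian case, where it reads $\binom{m}{2}-\binom{m-1}{2}=m-1$). Hence $\dim X-\dim Z\ge d_V+1-r-d_{V'}$ for $\frak{sp}$, while for $\frak{so}$ one has $\dim X-\dim Z\ge(d_V-1-r)-\dim Q$, i.e. $\ge d_V+1-r-d_{V'}$ when $V'$ is not totally isotropic and $\ge d_V-r-d_{V'}$ when it is. It remains to see that the relevant bound is $\ge1$ under the negation of the conclusion. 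For $\frak{sp}$, part a), with $r+d_{V'}\le d_V$: clear. For $\frak{so}$, part b), with $r+d_{V'}\le d_V$ and $r<d_V/2$: if $V'$ is not totally isotropic then $d_V+1-r-d_{V'}\ge1$; if $V'$ is totally isotropic then $d_{V'}\le\lfloor d_V/2\rfloor<d_V-r$, so $d_{V'}\le d_V-r-1$ and $d_V-r-d_{V'}\ge1$. For $\frak{so}$, part c): the negation of the conclusion means $d_{V'}\le m$ and, if $d_{V'}=m$, that $V'$ is not totally isotropic; either way $\dim Q\le m-2$, whence $\dim X-\dim Z\ge(m-1)-(m-2)=1$. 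Thus in every case $\dim Z<\dim X$. Since $X$ is irreducible and $Z\to X$ is a morphism of projective varieties, the image of $Z$ in $X$ is a proper closed subvariety; any $W\in X$ outside it satisfies $W\cap V'=0$, because a nonzero $v\in W\cap V'$ would be isotropic (as $W$ is) and so would yield a point $(W,[v])\in Z$. This contradicts the hypothesis, proving all three parts.

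The step I expect to be the main obstacle is the orthogonal Lagrangian case c). It requires the two-component structure of $\Gr(^0m;V)$, the dimensions $\binom{m}{2}$ and $\binom{m-1}{2}$ of its components and of the relevant fibre, and — most delicately — separating the genuinely exceptional situation ``$d_{V'}=m$ with $V'$ totally isotropic'', where the asserted conclusion holds outright and there is nothing to prove, from ``$d_{V'}=m$ with $V'$ not totally isotropic'', where the extra drop $\dim Q\le d_{V'}-2$ is precisely what makes the count work. Parts a) and b) are, by contrast, a routine application of the estimate, once one uses the elementary bound $d_{V'}\le\lfloor d_V/2\rfloor$ for totally isotropic $V'$ to dispatch the borderline sub-case of b).
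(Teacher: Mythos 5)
Your argument is correct, and all three parts check out: the incidence variety $Z\subset X\times\mathbb P(V)$ is projective, the fibre of $Z\to Q$ over an isotropic line $[v]$ is (contained in) the isotropic $(r-1)$-Grassmannian of the nondegenerate space $v^\perp/\mathbb Fv$, and your dimension bookkeeping — $\dim X-\dim(\mathrm{fibre})=d_V-r$ for $\frak{sp}$, $d_V-1-r$ for $\frak{so}$, with $\binom{m}{2}-\binom{m-1}{2}=m-1$ in the orthogonal Lagrangian case, and $\dim Q\le d_{V'}-1$ resp.\ $d_{V'}-2$ according to whether $V'$ is totally isotropic — yields $\dim Z<\dim X$ exactly under the negation of each conclusion, so a generic point of the irreducible $X$ (or of the fixed component in case c)) avoids $V'$. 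The treatment of the borderline subcases (totally isotropic $V'$ in b) via $d_{V'}\le\lfloor d_V/2\rfloor$, and the split $d_{V'}=m$ isotropic/non-isotropic in c)) is exactly where the content lies, and you handle both correctly; your flagged implicit hypothesis $r\le\lfloor d_V/2\rfloor$ for $\frak{so},\frak{sp}$ is indeed necessary and consistent with how the lemma is applied in the paper. Note, however, that the paper gives no proof at all — the lemma is dismissed as an ``exercise in linear algebra'' — so your route cannot be said to follow it: the intended argument is presumably a direct constructive one (e.g.\ extending, by Witt's theorem or an induction on hyperbolic planes, an isotropic subspace inside a complement-like position relative to $V'$), which is more elementary but requires separate care in the Lagrangian case c), where the two components of $\Gr(^0r;V)$ and the parity of $\dim(W\cap V')$ enter. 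Your projective dimension count buys uniformity across $\frak{sl},\frak{so},\frak{sp}$ and makes case c) almost automatic, at the cost of invoking standard facts about isotropic Grassmannians (their dimensions, irreducibility for $r<\frac{d_V}2$, and the two-component structure with equidimensional components), all of which are legitimate and already used elsewhere in the paper.
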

\begin{proof}Exercise in linear algebra.\end{proof}
Let $Z$ be a $G$-variety. We denote the maximal dimension of a
$G$-orbit on $Z$ by m$_G(Z)$. For a subvariety $Y\subset Z$ we set
\begin{center}$GY:=\{z\in Z\mid z=gy$ for some $y\in Y$ and $g\in
G\}$.\end{center}
\begin{lemma}\label{Lpo}a) Assume that m$_G(\mathbb P(V))<\dim G$. Then \begin{center}$\dim(\frak g\cdot V)<2(\dim G$--rank$G$)(rank$G$+1).\end{center}b)\label{Lisoco} Let $\frak s=\frak{so}$. Assume that
\begin{center}m$_G(\mathbb P(V))=$dim$G$ and m$_G($Gr$(^01; V))<$dim$G$.\end{center}Then\begin{center}2dim$G+2\ge d_V$.\end{center}\end{lemma}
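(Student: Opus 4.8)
The plan is to reduce both statements to one representation-theoretic estimate, and then, for (b), to sharpen it by using the invariant form. On an irreducible $G$-variety $Z$ the function $z\mapsto\dim\mathrm{Stab}_G(z)$ is upper semicontinuous, so $\mathrm{m}_G(Z)<\dim G$ holds if and only if \emph{every} point of $Z$ has positive-dimensional stabilizer. In (a) I apply this to $Z=\mathbb P(V)$ and, since the stabilizer of $[v]$ for $v\in\frak g\cdot V$ is the same whether computed in $\mathbb P(V)$ or in $\mathbb P(\frak g\cdot V)$, replace $V$ by $W:=\frak g\cdot V$; in (b) I apply it to $Z=\Gr(^01;V)$, which for $d_V\ge3$ is the irreducible quadric $Q\subset\mathbb P(V)$ of isotropic lines. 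The structural point is that for any line $[v]$ the Lie algebra $\frak g_{[v]}=\{x\in\frak g\mid xv\in\mathbb F v\}$ of $\mathrm{Stab}_G([v])$ is algebraic, hence stable under the Jordan decomposition: if $\frak g_{[v]}\ne0$ it contains either a non-zero semisimple element, of which $v$ is an eigenvector, or a non-zero nilpotent element $x$, which acts nilpotently on $V$ and so satisfies $xv=0$. Hence the set of lines with positive-dimensional $G$-stabilizer lies in $\overline A\cup\overline B$, where $A$ is the set of eigenlines of non-zero semisimple elements of $\frak g$ and $B=\bigcup_{0\ne x\text{ nilpotent}}\mathbb P(\ker(x|_V))$.

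Now fix a Cartan $\frak h\subset\frak g$, a Borel $\frak b=\frak h\oplus\frak n$, and the torus $T\subset G$. Every semisimple element of $\frak g$ is $G$-conjugate into $\frak h$ and every nilpotent one into $\frak n$, so $A=\overline{G\cdot A_{\frak h}}$, where $A_{\frak h}$ is the set of lines whose $\frak h$-weight support lies on a proper affine hyperplane of $\frak h^*$, and $B=\overline{G\cdot\bigcup_{0\ne x\in\frak n}\mathbb P(\ker(x|_W))}$. Since $T$ stabilizes $A_{\frak h}$ and the Borel subgroup stabilizes $\bigcup_{x\in\frak n}\mathbb P(\ker(x|_W))$, the inequality $\dim(GY)\le\dim(G/\mathrm{Stab}_G Y)+\dim Y$ yields $\dim A\le(\dim G-\rk G)+h(W)-1$ and $\dim B\le(\dim G-\rk G)+k(W)-2$, where $h(W)$ is the largest weight-multiplicity mass of $W$ carried by an affine hyperplane of $\frak h^*$ and $k(W)=\max_{0\ne x\in\frak n}\dim\ker(x|_W)$. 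Write $W=\bigoplus_iU_i^{\oplus m_i}$ with the $U_i$ non-trivial and irreducible. The hypothesis of (a) says $\mathbb P(W)\subset\overline A\cup\overline B$, hence $\mathbb P(W)\subset\overline A$ or $\mathbb P(W)\subset\overline B$ by irreducibility; substituting into the estimates and using that the rank of an element of $\frak g$ on $W$ is additive over the $U_i$-isotypic components gives, in either case, $\sum_im_i\rho_i\le\dim G-\rk G$, where $\rho_i\ge1$ is the least rank on $U_i$ of a non-zero shifted semisimple, respectively a non-zero nilpotent, element of $\frak g$. Combined with the bound $\dim U_i\le(\rk G+1)\rho_i$, which holds for every non-trivial irreducible $U_i$ and is an equality for the natural module of $\frak{sl}_{\rk G+1}$, this gives $\dim(\frak g\cdot V)=\sum_im_i\dim U_i\le(\rk G+1)(\dim G-\rk G)<2(\dim G-\rk G)(\rk G+1)$, proving (a).

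For (b) the locus $Z_{>0}$ of lines with positive-dimensional $G$-stabilizer is a proper closed subset of $\mathbb P(V)$, because $\mathrm{m}_G(\mathbb P(V))=\dim G$; it contains the hypersurface $Q$, so $\dim Z_{>0}=d_V-2$ and $Q$ is one of its irreducible components. Refining the dichotomy, every isotropic line either lies in a non-zero-eigenvalue eigenspace of a semisimple element $x\in\frak g\subset\frak{so}(V)$, such an eigenspace being isotropic for the invariant form and hence of dimension at most $d_V/2$, or lies in $\ker(x|_V)$ for some non-zero $x\in\frak g$; by irreducibility $Q$ lies in the closure of one of these two families. In the first case the locus $G$-swept out by such eigenspaces has dimension at most $(\dim G-\rk G)+(d_V/2-1)$, so $d_V-2\le(\dim G-\rk G)+d_V/2-1$, that is $d_V\le2(\dim G-\rk G)+2\le2\dim G+2$. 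In the second case the analogous count forces a non-zero $x\in\frak g$ with $\dim\ker(x|_V)\ge d_V-\dim G$; since $\ker(x|_V)$ then has radical of dimension at most $\mathrm{rank}(x|_V)\le\dim G$, one restricts to its non-degenerate quotient (on which $\frak g^x$ acts), applies Lemma~\ref{Llaso} to control how the isotropic lines are distributed inside $\ker(x|_V)$, and re-runs the argument, obtaining again $d_V\le2\dim G+2$.

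The hard part is the inequality $\dim U\le(\rk G+1)\rho$ for an arbitrary non-trivial irreducible $G$-module $U$, where $\rho$ is the least rank on $U$ of a non-zero shifted semisimple (or non-zero nilpotent) element of $\frak g$; equivalently, that no affine hyperplane of $\frak h^*$ carries more than the fraction $\rk G/(\rk G+1)$ of the weight multiplicities of $U$, together with its analogue for kernels of nilpotent elements. I expect this to need either a uniform argument on the Weyl-invariant weight multiset of $U$ (projecting it onto a line transverse to the given hyperplane) or a short case analysis over $U$; it is also precisely the input that, combined with the isotropy estimate, makes the zero-eigenvalue and nilpotent-kernel branches of the proof of (b) go through.
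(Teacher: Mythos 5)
The paper's proof of part~(a) is a one-line citation to the Andreev--Popov theorem~\cite{AnPo}, which is exactly the statement that a linear $G$-representation with no generically trivial projective stabilizer has $\dim(\frak g\cdot V)<2(\dim G-\rk G)(\rk G+1)$. You instead attempt to re-derive that theorem. Your skeleton --- decompose the positive-stabilizer locus as eigenlines of non-zero semisimple elements union kernels of non-zero nilpotent elements, conjugate into $\frak h$ and $\frak n$, bound $\dim A$ and $\dim B$, and compare with $\dim\mathbb P(W)$ --- is a reasonable route, but it hinges entirely on the inequality $\dim U\le(\rk G+1)\rho$ for every non-trivial irreducible $U$, which you state explicitly as ``the hard part'' and never prove. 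As written this is not a proof of (a): you have reduced it to an assertion that carries essentially the full content of the Andreev--Popov result, and then left that assertion open.

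For part~(b) the paper's argument is much shorter and avoids the semisimple/nilpotent split altogether. It produces a \emph{single} element $A\in\frak g$ whose adjoint orbit meets $\frak g_{[v]}$ for $[v]$ generic in the quadric, and then passes to a generalized eigenspace $V^A_\lambda=\Ker(A-\lambda\Id_V)^{d_V}$: one finds $\mathcal V\subset\overline{G V^A_\lambda}$, and since $\mathrm{m}_G(\mathbb P(V))=\dim G$ forces $\overline{GV^A_\lambda}=\mathcal V$, the subspace $V^A_\lambda$ is itself isotropic, hence of dimension $\le d_V/2$, and a one-line dimension count of $GV^A_\lambda$ gives $2\dim G+2\ge d_V$. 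In your version the first branch (non-zero eigenvalue of a semisimple element) does carry through to $d_V\le 2(\dim G-\rk G)+2$, but the second branch --- ``$\dim\ker(x|_V)\ge d_V-\dim G$, restrict to the non-degenerate quotient, apply Lemma~\ref{Llaso}, and re-run the argument'' --- is not a proof. You do not say which group acts on the quotient, what dimension bound replaces $d_V$ and $\dim G$ after restriction, or why the recursion terminates with the correct inequality; this is precisely the complication the paper's use of \emph{generalized} (rather than ordinary) eigenspaces is designed to sidestep, since $V^A_\lambda$ already absorbs the unipotent part of the stabilizer. So there are two genuine gaps: the unproven inequality $\dim U\le(\rk G+1)\rho$ in (a), and the unfinished recursion in the kernel branch of (b).
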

\begin{proof}The statement of part a) is the main result of~\cite{AnPo}. In part
b) Gr$(^01; V)=\mathbb P(\mathcal V)$, where $\mathcal V$ is the set
of isotropic vectors in $V$. The inequality m$_G(\mathbb P(\mathcal
V))<$dim$G$ implies that the stabilizer of a generic point of $\mathbb P(\mathcal V)$ under
the action of $G$ has positive dimension. Therefore there exists $A\in\frak g$ such that $\EuScript O[A]$ intersects $\frak g_x$ for all $x$ in an open subset of $\mathbb P(\mathcal V)$. Then, for some eigenvalue
$\lambda$ of $A$, we have $\mathcal V\subset\overline{GV^A_\lambda}$, where
\begin{center}$V^A_\lambda:=$Ker$(A-\lambda$Id$_V)^{d_V}$\end{center} and $\overline{GV^A_\lambda}$ is the closure of $GV^A_\lambda$ in $V$. Furthermore, the equality
m$_G(\mathbb P(V))=$dim$G$ implies that $\mathcal V$ coincides  with
$\overline{GV^A_\lambda}$. Therefore $V^A_\lambda\subset\mathcal V$,
and the space $V^A_\lambda$ is isotropic. In particular,
dim$V^A_\lambda\le\frac{d_V}{2}$, and consequently $\dim
G\ge\frac{d_V}{2}$-1.\end{proof}
\begin{lemma}\label{LGr}\label{Lisoso} Let
\begin{center}$2\le r\le\frac{d_V}{2}$, m$_G(\mathbb P(V))=$dim$G$ and m$_G($Gr$(^0r;
V))<$dim$G$\end{center}for some $r$. Then \begin{center}2dim$G+2\ge
d_V$.\end{center}\end{lemma}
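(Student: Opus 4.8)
The plan is to reduce the statement, by induction on $r$, to the case $r=1$ already handled in Lemma \ref{Lisoco}b). First I would fix $r$ with $2\le r\le\frac{d_V}{2}$ and assume, for contradiction, that $\mathrm m_G(\mathbb P(V))=\dim G$ but $2\dim G+2<d_V$; I must then derive $\mathrm m_G(\mathrm{Gr}(^0r;V))=\dim G$. The key observation is that there is a natural $G$-equivariant incidence correspondence relating $\mathrm{Gr}(^0r;V)$ to $\mathrm{Gr}(^0(r{-}1);V)$ and to $\mathrm{Gr}(^01;V)=\mathbb P(\mathcal V)$: an isotropic $r$-subspace contains isotropic $(r{-}1)$-subspaces, and conversely an isotropic $(r{-}1)$-subspace $W$ sits inside isotropic $r$-subspaces of the form $W\oplus\mathbb F v$, where $v$ ranges over the isotropic vectors of $W^\perp/W$ (a space on which the induced form is nondegenerate of dimension $d_V-2(r{-}1)$, still positive since $r\le\frac{d_V}{2}$).

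The core of the argument is a dimension count for the generic stabilizer. Suppose $\mathrm m_G(\mathrm{Gr}(^0r;V))<\dim G$, so that the stabilizer $G_U$ of a generic isotropic $r$-subspace $U$ has positive dimension; equivalently there is a nonzero $A\in\frak g$ fixing $U$, for $U$ in an open subset of $\mathrm{Gr}(^0r;V)$. Restricting attention to the flags $W\subset U$ with $W$ of dimension $r-1$, such an $A$ will, for generic $W$, stabilize $W$ as well (a generic isotropic $(r{-}1)$-subspace arises this way). This forces $\mathrm m_G(\mathrm{Gr}(^0(r{-}1);V))<\dim G$ as well — here I would need to check that the map "forget the last line" from the flag variety $\{W\subset U\}$ down to $\mathrm{Gr}(^0(r{-}1);V)$ is dominant with positive-dimensional generic fibre, and push the positive-dimensional-stabilizer condition through it. Iterating this descent brings us to $r=1$, i.e. to $\mathrm m_G(\mathbb P(\mathcal V))<\dim G$, whereupon Lemma \ref{Lisoco}b) gives exactly $2\dim G+2\ge d_V$, the desired contradiction (and hence the conclusion).

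Alternatively, and perhaps more cleanly, one runs the induction in the upward direction: I would show directly that if $\mathrm m_G(\mathrm{Gr}(^0(r{-}1);V))=\dim G$ and $2\dim G+2<d_V$, then $\mathrm m_G(\mathrm{Gr}(^0r;V))=\dim G$. Indeed, pick a generic isotropic $(r{-}1)$-subspace $W$ with finite stabilizer $G_W$; the action of $G_W$ on the isotropic vectors of the nondegenerate space $W^\perp/W$ of dimension $d_V-2(r-1)>2\dim G+2-2(r-1)\ge 2\dim G_W$ has, again by the same orbit-dimension estimate used in Lemma \ref{Lisoco}, a point with finite $G_W$-stabilizer (the relevant inequality $2\dim G_W+2<\dim(W^\perp/W)$ holds because $\dim G_W=0$ and $r\le\frac{d_V}{2}$). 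Extending $W$ by the line through such a vector produces an isotropic $r$-subspace with finite $G$-stabilizer, i.e. $\mathrm m_G(\mathrm{Gr}(^0r;V))=\dim G$. Starting the induction at $r=1$ with Lemma \ref{Lisoco}b), this yields the claim for all $r$ in the stated range.

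The main obstacle I anticipate is not the orbit-dimension bookkeeping but the passage between the stabilizer of a subspace and the stabilizers of its sub/super-spaces: one must argue that the element $A\in\frak g$ witnessing a positive-dimensional stabilizer can be chosen to respect a generic sub-flag, which requires controlling how the incidence variety $\{W\subset U\}$ maps to the two Grassmannians and verifying dominance and fibre dimension of those maps. There is also a minor case-distinction nuisance when $\frak g=\frak{so}(V)$, $d_V$ even and $r=\frac{d_V}{2}$, where $\mathrm{Gr}(^0r;V)$ is reducible with two isomorphic components; since $G$ is connected it preserves each component, and the argument applies verbatim to either one, so the inequality $2\dim G+2\ge d_V$ is unaffected.
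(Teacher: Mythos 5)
Your sketch takes a genuinely different route from the paper (induction on $r$, reducing to the $r=1$ case of Lemma~\ref{Lisoco}), but both of your variants rest on a step that does not hold. Downward: if $A\in\mathfrak g$ fixes a generic isotropic $r$-subspace $U$, it will generally \emph{not} fix a generic $(r{-}1)$-subspace $W\subset U$, since $A|_U$ is a linear endomorphism of $U$ and the $A$-stable hyperplanes of $U$ form a proper closed subvariety of $\mathbb P(U^*)$ unless $A|_U$ is a scalar. Upward: from $G_W$ finite you cannot conclude $G_U$ finite for $U=W\oplus\langle v\rangle$, because $G_U$ need not preserve the intermediate flag $W\subset U$; one can have $G_W\cap G_U$ finite while $G_U$ itself is positive-dimensional, acting on $\mathbb P(U^*)$ with generically finite stabilizer. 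In terms of the incidence variety $Z=\{W\subset U\}$ the only bounds available are $\mathrm m_G(\Gr(^0(r{-}1);V))\le \mathrm m_G(Z)\le \mathrm m_G(\Gr(^0r;V))+(r-1)$, which run in the wrong direction for either induction. You correctly flag the flag-passage as the main obstacle, but it is not a dominance/fibre-dimension verification to be carried out — the inequality is simply lossy by $r-1$ in the direction you need.

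The paper's actual proof avoids stabilizer propagation and does not iterate. Fixing one generic $x\in\Gr(^0r;V)$ with $\dim G_x\ge1$ and a generic line $\langle p\rangle\subset V(x)$, it considers the tangent space $\mathfrak g\langle p\rangle$ to the \emph{full} $G$-orbit of $\langle p\rangle$ inside $V/\langle p\rangle$. Since $\mathfrak g_x\cdot p\subset V(x)\cap(\mathfrak g\cdot p)$, this tangent space meets $V(x)/\langle p\rangle$ in dimension at least $\dim G_x\ge 1$; as the intersection condition is closed in the $(r{-}1)$-subspace, for a generic $\langle p\rangle$ every isotropic $(r{-}1)$-subspace of $\langle p\rangle^\perp/\langle p\rangle$ meets $\mathfrak g\langle p\rangle$. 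Lemma~\ref{Lla1}, applied in $\langle p\rangle^\perp/\langle p\rangle$ with $V'=\mathfrak g\langle p\rangle$, then gives $(r-1)+\dim G\ge d_V-2$ in a single step. The essential gain is that the tangent space to the orbit through a fixed line simultaneously records information about all $r$-subspaces containing that line, whereas chasing stabilizers one rung of the Grassmannian chain at a time loses exactly that information.
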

\begin{proof}Let $x\in$Gr$(^0r; V)$ denote a point and $V(x)$ be the corresponding $r$-dimensional space. Let $G_x$ be the stabilizer of $x$ in $G$; $G_x$ acts on $V(x)$ and, if $x$ is generic, we have \begin{center}m$_{G_x}(\mathbb P(V(x)))=$dim$G_x$.\end{center}

Let $p\in V(x)$ be a non-zero vector, $\langle p\rangle\subset V(x)$ be the line generated by $p$, and $\frak gp\subset V$ be the tangent space to $Gp$ in $p$. Then dim$((V(x)/\langle p\rangle)\cap\frak g\langle
p\rangle)\ge$dim$G_x$, where $\frak g\langle p\rangle$ is the image
of $\frak gp$ in $V/\langle p\rangle$. Hence,
for a generic $\langle p\rangle\in$Gr$(^01; V)$ and any
\begin{center}$\tilde V\in$Gr$(^0(r-1); \langle
p\rangle^\bot/\langle p\rangle)$\end{center} we have dim$(\tilde
V\cap\frak g\langle p\rangle)\ge$dim$G_x$. In particular,
dim$(\tilde V\cap\frak g\langle p\rangle)\ge1.$ Therefore, by
Lemma~\ref{Lla1} (applied to $V'=\frak g\langle p\rangle$ and
$V=\langle p\rangle^\bot/\langle p\rangle$) we obtain
\begin{center}$r-1$+dim$\frak g\ge$dim$(\langle p\rangle^\bot/\langle p\rangle)$.\end{center}
As dim$(\langle p\rangle^\bot/\langle p\rangle)\ge d_V-2$ and
$r\le\frac{d_V}2$, we conclude that $2\dim G+2\ge d_V$.\end{proof}
\begin{proposition}[{~\cite[text after Prop.~3]{Vi}}]\label{Pvi} Let $Z$ be an irreducible $G$-symplectic variety with a moment map $\Phi: Z\to\frak g^*$. Then the dimension of $\Phi(Z)$ equals~m$_G(Z)$.\end{proposition}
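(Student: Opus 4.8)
The plan is to reduce the equality $\dim\Phi(Z)=\mathrm m_G(Z)$ to a pointwise computation of the rank of the differential $d_z\Phi\colon T_zZ\to\frak g^*$ at smooth points $z$ of $Z$. Recall that the defining property of a moment map is that for every $\xi\in\frak g$ the function $\Phi^\xi:=\langle\Phi(\cdot),\xi\rangle$ is a Hamiltonian for the vector field $\xi_Z$ induced by the $G$-action, i.e. $d\Phi^\xi=\iota_{\xi_Z}\omega$, where $\omega$ denotes the symplectic form. Evaluating at a smooth point $z$ and pairing with $v\in T_zZ$ gives $\langle d_z\Phi(v),\xi\rangle=\omega_z(\xi_Z(z),v)$. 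Since $\{\xi_Z(z):\xi\in\frak g\}=T_z(Gz)$, this identifies $\Ker d_z\Phi$ with the $\omega_z$-orthogonal complement of $T_z(Gz)$ in $T_zZ$; as $\omega_z$ is non-degenerate, this complement has dimension $\dim Z-\dim(Gz)$, whence $\rk d_z\Phi=\dim(Gz)$ for every smooth $z$.

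Next I would pass from this pointwise identity to the two global quantities. As $Z$ is irreducible, its smooth locus $Z^{\mathrm{sm}}$ is a dense open $G$-stable subset, and the generic rank of $\Phi|_{Z^{\mathrm{sm}}}$ — equivalently, by lower semicontinuity of the rank together with irreducibility, the maximum of $\rk d_z\Phi$ over $z\in Z^{\mathrm{sm}}$ — equals $\dim\overline{\Phi(Z^{\mathrm{sm}})}=\dim\overline{\Phi(Z)}$; since $\Phi(Z)$ is constructible, $\dim\Phi(Z)=\dim\overline{\Phi(Z)}$. On the orbit side, $z\mapsto\dim(Gz)=\dim\frak g-\dim\frak g_z$ is lower semicontinuous, so $\{z\in Z:\dim(Gz)=\mathrm m_G(Z)\}$ is open, nonempty and hence dense, and therefore meets $Z^{\mathrm{sm}}$; thus $\mathrm m_G(Z)$ is attained at a smooth point. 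Combining with the first step, $\dim\Phi(Z)=\max_{z\in Z^{\mathrm{sm}}}\rk d_z\Phi=\max_{z\in Z^{\mathrm{sm}}}\dim(Gz)=\mathrm m_G(Z)$.

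I expect the only genuine subtlety to be the bookkeeping around smoothness and constructibility: the symplectic form and the moment map relation are available only on the smooth locus, so one must make sure that the two ``generic'' invariants — the generic rank of $\Phi$ and the maximal orbit dimension — are read off from one and the same dense $G$-stable open subset, and that $\dim\Phi(Z)$ is understood as the dimension of the Zariski closure of the image. These points are routine but are precisely where care would be needed if $Z$ were allowed to be singular; when a $G$-symplectic variety is taken to be smooth, the proof reduces to the rank computation of the first paragraph. Alternatively, one may simply invoke the cited result of Vinberg.
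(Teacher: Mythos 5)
Your proof is correct, but there is nothing in the paper to compare it against: Proposition~\ref{Pvi} is stated with a citation to Vinberg (``text after Prop.~3'') and is not proved in the paper at all, exactly as your closing sentence anticipates. Your argument is the standard one underlying Vinberg's statement, and it is complete: the moment map identity $\langle d_z\Phi(v),\xi\rangle=\omega_z(\xi_Z(z),v)$ identifies $\Ker d_z\Phi$ with the $\omega_z$-orthogonal of $T_z(Gz)$, so $\rk d_z\Phi=\dim(Gz)$ at smooth points; then lower semicontinuity of both the rank and the orbit-dimension function, irreducibility of $Z$, constructibility of $\Phi(Z)$, and the characteristic-zero fact that the generic rank of a morphism equals the dimension of the closure of its image (this is where $\mathrm{char}\,\mathbb F=0$ enters, via generic smoothness) give $\dim\Phi(Z)=\mathrm m_G(Z)$. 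Your care about the singular locus is what lets the statement stand for a possibly singular $G$-symplectic variety; in the paper's only use of the proposition (the proof of Proposition~\ref{Csl}) the variety $Z$ is a coadjoint orbit $\EuScript O$, which is smooth, so that bookkeeping is not needed there. In short: the paper outsources the proof to the literature, and your writeup supplies a self-contained substitute of the expected kind.
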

\begin{proof}[Proof of Proposition~\ref{Csl}]Since we can consider the morphism $\EuScript O\to\frak g^*$ as a moment map, Proposition~\ref{Pvi} implies that the image of $\EuScript O$ in $\frak g^*$ is not dense if and only if \begin{center}m$_G(\EuScript O)<$dim$G$.\end{center}Furthermore, according to Lemma~\ref{Lcgr}, for some $r$, $1\le r\le\frac{d_V}2$, there exists an S$(V)$-equivariant morphism from a finite covering of $\EuScript O$ onto Gr$(^0r; V)$. Therefore \begin{center}m$_G($Gr$(^0r; V))<$dim$G$.\end{center}

If m$_G($Gr$(^01; V))<$dim$G$, the claim of Proposition~\ref{Csl}
follows from Lemma~\ref{Lpo}. Indeed, if also m$_G(\mathbb
P(V))<\dim G$, we apply Lemma~\ref{Lpo} a), and if m$_G(\mathbb
P(V))=\dim G$, then $\frak s=\frak{so}$ (as $\mathbb P(V)\ne
\Gr(^01; V))$ and we apply Lemma~\ref{Lpo} b).

If \begin{center}m$_G($Gr$(^01; V))=$dim$G$,\end{center} then
necessarily m$_G(\mathbb P(V))=$dim$G$ and $r\ge2$. In this case the
claim of Proposition~\ref{Csl} follows from
Lemma~\ref{LGr}.\end{proof}

Proposition~\ref{Csl} is used in a crucial way in the proof of Theorem~\ref{Tlinf}. The results in the remaining part of this section are necessary for the proof of Theorem~\ref{Tlinfrk}.

In what follows $W$ denotes a subspace of $V$ of dimension $d_W$. If
$\frak s=\frak{so}, \frak{sp}$, we assume that the restriction of
the fixed form on $V$ is nondegenerate on $W$. This yields
embeddings S$(W)\to$S$(V)$ and $\frak s(W)\to\frak s(V)$. For $\frak
s=\frak{sl}$ an embedding S$(W)\to$S$(V$) is determined by a choice
of complement to $W$ in $V$. Moreover, in all three cases we have an
orthogonal (with respect to the Cartan-Killing form on $\frak s(V)$)
projection $\phi:\frak s(V)\to\frak s(W)$. It is easy to see that
$\phi(X)=\pr_W\circ(X|_W)$, where $X\in\frak s(V)$ is viewed as an
element of $\End V$ and $\pr_W: V\to W$ is the orthogonal projection
for $\frak s=\frak{so}, \frak{sp}$, and respectively the projection
along the fixed complement of $W$ in $V$ for $\frak s=\frak{sl}$. We
fix the embedding $\frak s(W)\to\frak s(V)$, and $\phi: \frak
s(V)\to\frak s(W)$ stands for the corresponding projection.

Recall that $\EuScript O[X]$ denotes the S$(V)$-orbit in $\frak
s(V)$ of $X\in\frak s(V)$.
\begin{lemma}\label{Lrkmaxsp}Let
$X\in\frak s(V)$. Then\\a)
 $\rk \phi(X)\le\rk X$;\\b) if $d_W>\rk X$
, the image $\phi(\EuScript O[X])$ contains an element of rank $\rk
X$.\end{lemma}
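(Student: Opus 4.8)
The plan is to prove both parts by reducing to the intrinsic linear-algebra meaning of $\phi$, namely $\phi(X)=\pr_W\circ(X|_W)$, where $\pr_W\colon V\to W$ is the relevant projection. Throughout I view $X\in\mathfrak s(V)$ as an element of $\End V$, so $\phi(X)\in\End W$, and I recall that $\rk\phi(X)=\rk(\pr_W\circ X\circ\iota_W)$, where $\iota_W\colon W\hookrightarrow V$ is the inclusion.

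For part a), the point is simply that $\phi(X)$ factors through $X$. Indeed, $\phi(X)=\pr_W\circ X\circ\iota_W$ is a composition in which $X$ appears once, so $\Imm\phi(X)\subseteq\pr_W(\Imm X)$ and hence $\rk\phi(X)\le\rk X$. This is immediate and requires no case analysis.

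For part b), the strategy is to exhibit a single element $Y\in\EuScript O[X]$ with $\rk\phi(Y)=\rk X$; by part a) this rank is then maximal. Write $r=\rk X$ and assume $d_W>r$. The idea is to move $X$ by the $\mathrm S(V)$-action so that its image is ``generic'' relative to $W$: concretely, so that $\Imm(gX)$ is a subspace $U$ of dimension $r$ with $U\cap\Ker\pr_W=0$ (so that $\pr_W|_U$ is injective) and, in the orthogonal cases, so that $U$ meets $W$ transversally enough. Since $d_W>r=\dim U$, such a $U$ exists by a dimension count; the remaining task is to realize it as $\Imm(gX)$ for suitable $g\in\mathrm S(V)$. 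In the $\mathfrak{sl}$ case this is elementary: $\mathrm{SL}(V)$ acts transitively on the $r$-dimensional subspaces of $V$ realizable as images of trace-zero operators of rank $r$ with a prescribed kernel, and one arranges kernel and image in general position with respect to $W$ and its fixed complement. In the orthogonal/symplectic cases one must respect the form: here one uses that the orbit $\EuScript O[X]$ is determined by the conjugacy type of $X$ together with the form-compatibility (the Jordan type plus the bilinear-form data), and that conjugation by $\mathrm S(V)$ can carry $X$ to an operator whose image and kernel are in a position where $\pr_W\circ X\circ\iota_W$ still has rank $r$ — for instance, choose $g$ so that a chosen $r$-dimensional subspace on which $X$ is injective lies inside $W$ and $X$ maps it isomorphically to a subspace meeting $\Ker\pr_W=W^\perp$ trivially.

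The main obstacle is exactly this orthogonal/symplectic bookkeeping in part b): one must check that the needed generic position of $\Imm X$ and (a complement of) $\Ker X$ relative to $W$ is compatible with $X$ lying in $\mathfrak s(V)$ and with staying inside the single orbit $\EuScript O[X]$, using only the hypothesis $d_W>r$ and the nondegeneracy of the form on $W$. I expect this to come down to the kind of ``exercise in linear algebra'' already invoked for Lemma~\ref{Lla1}: pick a subspace $U\subset W$ of dimension $r$ in general position inside $W$, use the $\mathrm S(V)$-action to conjugate $X$ so that $X$ is injective on $U$ with $X(U)\cap W^\perp=0$, and conclude $\rk\phi(X)=\rk(\pr_W\circ X|_U)=r$. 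Everything else is formal.
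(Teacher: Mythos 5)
Your proposal is correct and follows essentially the same route as the paper. Part a) is identical: $\phi(X)$ factors through $X$, so $\rk\phi(X)\le\rk X$. For part b) the paper phrases the genericity argument dually — instead of moving $X$ by $g$ so that $\Imm(gXg^{-1})$ is in general position relative to a fixed $W$, it fixes $X$ and picks a generic $d_W$-dimensional subspace $\tilde W\subset V$ (with nondegenerate restricted form in the $\mathfrak{so},\mathfrak{sp}$ cases), observes that for such $\tilde W$ one has $\rk X|_{\tilde W}=\rk X$ and $\tilde W^\perp\cap\Imm X=0$ (both by a dimension count using $d_W>\rk X$, and in the form cases the two conditions coincide since $\Ker X=(\Imm X)^\perp$), hence $\rk(\pr_{\tilde W}\circ X|_{\tilde W})=\rk X$, and then transfers back via some $g\in \mathrm S(V)$ with $g(W)=\tilde W$. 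This cleanly packages the ``orthogonal/symplectic bookkeeping'' you flagged as the main obstacle into a single use of transitivity of $\mathrm S(V)$ on nondegenerate $d_W$-planes plus an open-density argument; you may want to adopt that framing since it avoids having to describe positional conditions on both $\Imm X$ and (a complement of) $\Ker X$ separately.
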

\begin{proof} Part a) follows immediately from the formula
\begin{center}$\phi(X)=$pr$_W\circ (X|_W)$.\end{center}

To prove part b) we consider a generic subspace $\tilde W\subset W$
of dimension $d_W$. Note that for $\frak s=\frak{so}, \frak{sp}$ the
restriction of the form to $\tilde W$ is nondegenerate. Furthermore,
$\rk X|_{\tilde W}=\rk X$, and $\tilde W^\bot\cap\Imm X$=0 where
$\Imm X$ is the image of $X\in\End V$. Therefore $\rk(\pr_{\tilde
W}\circ X|_{\tilde W})=\rk X$.

Since there exists $g\in$S$(V)$ with $g(W)=\tilde W$, we have
\begin{center}$\rk \phi(g(X))=\rk X$.\end{center}
\end{proof}
\begin{proof}The proof is very similar to the proof of Lemma~\ref{Lrkmaxsp}.\end{proof}
\begin{lemma}\label{Lbs}Assume $r\le\frac{d_W}2$. Then the set $\frak s(V)^{\le r}$ defined by (2) is the largest S$(V)$-invariant subset of the preimage $\phi^{-1}(\frak s(W)^{\le r})$.\end{lemma}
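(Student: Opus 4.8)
The plan is to prove the claimed identity $\frak s(V)^{\le r}=\{x\in\frak s(V)\mid\text{S}(V)x\subseteq\phi^{-1}(\frak s(W)^{\le r})\}$ by a double inclusion, the main point being to combine the two parts of Lemma~\ref{Lrkmaxsp} with the rank bound on $\phi$.

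First I would check the easy inclusion: if $X\in\frak s(V)^{\le r}$, then by definition there is $\lambda\in\mathbb F$ with $\rk(X-\lambda\,\text{Id}_V)\le r$. For any $g\in\text{S}(V)$ we have $\rk(gXg^{-1}-\lambda\,\text{Id}_V)\le r$ as well, so it suffices to show $\phi(Y)\in\frak s(W)^{\le r}$ whenever $\rk(Y-\lambda\,\text{Id}_V)\le r$. Here one must be slightly careful because $\phi$ is the \emph{orthogonal} projection $\frak s(V)\to\frak s(W)$, not the naive restriction-and-project of the operator; but the formula $\phi(Y)=\pr_W\circ(Y|_W)$ recorded in the excerpt lets us compute $\phi(Y)-\lambda\,\text{Id}_W=\pr_W\circ((Y-\lambda\,\text{Id}_V)|_W)$, and the right-hand side has rank $\le\rk(Y-\lambda\,\text{Id}_V)\le r$ since composing with the projection $\pr_W$ and restricting to $W$ can only drop the rank. (In the $\frak{sl}$ case the same computation works with the projection along the fixed complement, and the scalar operator $\lambda\,\text{Id}_V$ is traceless exactly when it is zero, which is harmless.) Hence $\phi(Y)\in\frak s(W)^{\le r}$, so $X$ lies in the asserted set, and since that set is visibly $\text{S}(V)$-invariant we get $\frak s(V)^{\le r}\subseteq$ that set.

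For the reverse inclusion — and for the sharper statement that $\frak s(V)^{\le r}$ is the \emph{largest} such invariant set — I would argue by contraposition. Suppose $X\in\frak s(V)$ with $X\notin\frak s(V)^{\le r}$; I want to produce $g\in\text{S}(V)$ with $\phi(gX)\notin\frak s(W)^{\le r}$. Since $X\notin\frak s(V)^{\le r}$, we have $\rk(X-\lambda\,\text{Id}_V)>r$ for \emph{every} $\lambda\in\mathbb F$; in particular $\rk(X-\lambda\,\text{Id}_V)\ge r+1$. Because $r\le\frac{d_W}2<d_W$, and hence $d_W>r\ge\rk$ of nothing useful directly, I instead apply Lemma~\ref{Lrkmaxsp}b) to the operator $X-\lambda\,\text{Id}_V$ on $V$: the hypothesis there is $d_W>\rk(X-\lambda\,\text{Id}_V)$, which need not hold, so this has to be done more carefully — one wants a version of part b) that produces an element of $\phi(\EuScript O[X])$ whose rank, after subtracting the \emph{best-possible} scalar, is still $\ge r+1$. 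The clean way is: choose a generic $d_W$-dimensional isotropic-complement-respecting subspace $\tilde W$ (in the $\frak{so},\frak{sp}$ cases the form stays nondegenerate on a generic $\tilde W$, exactly as in the proof of Lemma~\ref{Lrkmaxsp}b)), and observe that for generic $\tilde W$ and \emph{every} $\mu\in\mathbb F$ one has $\rk(\pr_{\tilde W}\circ((X-\mu\,\text{Id}_V)|_{\tilde W}))\ge\min(r+1,\ d_W-(\text{something}))$; since $r+1\le\frac{d_W}2+1\le d_W$, genericity of $\tilde W$ forces equality to $\min(r+1,\dim\tilde W^\perp$-corrections$)$ and in particular $\ge r+1$. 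Transporting by $g\in\text{S}(V)$ with $g(W)=\tilde W$ then gives $\rk(\phi(gX)-\mu\,\text{Id}_W)\ge r+1$ for all $\mu$, i.e. $\phi(gX)\notin\frak s(W)^{\le r}$.

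The hard part, and where I expect to spend the most care, is exactly the genericity/rank-count in the last step: controlling $\rk(\pr_{\tilde W}\circ((X-\mu\,\text{Id}_V)|_{\tilde W}))$ \emph{uniformly in $\mu$} and checking that the hypothesis $r\le\frac{d_W}2$ is what makes it work (it prevents $\tilde W^\perp\cap\Imm(X-\mu\,\text{Id}_V)$ from being forced large and it keeps the isotropic-complement constraint consistent). This is the genuine content; the two inclusions otherwise reduce to the rank formula for $\phi$ and the $\text{S}(V)$-invariance of the rank. I would therefore structure the write-up as: (i) the formula computation giving $\frak s(V)^{\le r}\subseteq\phi^{-1}(\frak s(W)^{\le r})$ and its $\text{S}(V)$-saturation; (ii) a lemma-style genericity argument, patterned on the proof of Lemma~\ref{Lrkmaxsp}b) but applied to the whole affine line $\{X-\mu\,\text{Id}_V\}$, showing that $X\notin\frak s(V)^{\le r}$ forces some conjugate out of $\phi^{-1}(\frak s(W)^{\le r})$; (iii) assembling the two into the ``largest invariant subset'' statement. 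One small subtlety to flag in passing for the $\frak s=\frak{sl}$ case: ``$X-\lambda\,\text{Id}_V$'' need not lie in $\frak{sl}(V)$, but ranks and projections make sense in $\End V$ anyway and $\phi$ extends, so the argument is unaffected.
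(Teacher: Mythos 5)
Your first half is fine and is essentially the only thing one can read off directly from the formula $\phi(Y)=\pr_W\circ(Y|_W)$: since $\pr_W\circ((\lambda\Id_V)|_W)=\lambda\Id_W$, one gets $\phi(Y)-\lambda\Id_W=\pr_W\circ((Y-\lambda\Id_V)|_W)$, hence $\frak s(V)^{\le r}\subseteq\phi^{-1}(\frak s(W)^{\le r})$, and conjugation-invariance of $\frak s(V)^{\le r}$ places it inside the largest S$(V)$-invariant subset. (The paper offers no proof to compare with -- the lemma is left as an ``exercise in linear algebra'' -- so your argument has to stand on its own.)

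The problem is the reverse direction, which is the actual content of the lemma, and there your text stops at exactly the point where a proof would have to begin. You need: if $\rk(X-\lambda\Id_V)\ge r+1$ for every $\lambda$, then for some admissible $\tilde W=g(W)$ the compression $A=\pr_{\tilde W}(X|_{\tilde W})$ satisfies $\rk(A-\mu\Id_{\tilde W})\ge r+1$ for \emph{all} $\mu$ simultaneously. Your sketch of this step is not a proof: the displayed bound ``$\ge\min(r+1,\,d_W-(\text{something}))$'' and ``equality to $\min(r+1,\dim\tilde W^\perp\text{-corrections})$'' are placeholders, and you cannot simply imitate the proof of Lemma~\ref{Lrkmaxsp}~b), whose hypothesis $d_W>\rk(X-\mu\Id_V)$ generally fails here and whose transversality argument does not address the fact that $\tilde W$ enters twice (in the restriction and in the projection) and that infinitely many $\mu$ must be handled by one choice of $\tilde W$. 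A workable route is to reformulate badness: $A\in\frak s(\tilde W)^{\le r}$ forces an eigenvalue of $A$ of geometric multiplicity $\ge d_W-r\ge\frac{d_W}2$, i.e.\ a pair $(\mu,U)$ with $U\subseteq\tilde W$, $\dim U=d_W-r$, and $(X-\mu\Id_V)U\subseteq\tilde W^\perp$ (resp.\ the chosen complement $W'$ in the $\frak{sl}$ case); one then kills all such pairs either by a dimension count over the (finitely many relevant) $\mu$, or by an explicit construction of $\tilde W$ from a canonical form of $X$ (e.g.\ via Lemma~\ref{Lrr}), taking care that in the $\frak{so},\frak{sp}$ cases $\tilde W$ must be nondegenerate and reachable by S$(V)$, and that in the borderline case $r=\frac{d_W}2$ for $\frak{sp}$ one must also exclude a nonzero eigenvalue of multiplicity exactly $\frac{d_W}2$ -- this is precisely where the hypothesis $r\le\frac{d_W}2$ is doing its work, a point your outline raises but never settles. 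As it stands, the proposal is a correct plan whose decisive lemma is acknowledged but not proved, so the hard inclusion remains open.
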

\begin{proof} Exercise in linear algebra.\end{proof}
\begin{lemma}\label{Lglsp}Consider the projection $\varphi: \frak{sp}(V)^*\to\frak{gl}(V^{iso})^*$ dual to the embedding $\frak{gl}(V^{iso})\to\frak{sp}(V)$, where $V^{iso}$ is a maximal isotropic subspace of
$V$.
Assume $r\le\frac{d_V}2$. Then the set $\frak{sp}(V)^{\le r}$
defined by (2) is the largest $\Sp(V)$-invariant subset of the
preimage $\varphi^{-1}(\frak{sl}(V^{iso})^{\le r})$.\end{lemma}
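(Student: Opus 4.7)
The plan is to follow the strategy of Lemma~\ref{Lbs}, adapted to the Levi embedding $\frak{gl}(V^{iso})\hookrightarrow\frak{sp}(V)$. Identify $\frak{sp}(V)\cong\frak{sp}(V)^*$ via the Killing form and fix a Lagrangian complement $V^{iso*}$ of $V^{iso}$ in $V$. Every $X\in\frak{sp}(V)$ then decomposes as a block matrix $X=\begin{pmatrix}A & B\\ C & -A^T\end{pmatrix}$ with $B,C$ symmetric, and $\varphi$ becomes the projection $X\mapsto A\in\frak{gl}(V^{iso})$. Conjugation by $g\in\Sp(V)$ corresponds to replacing the Lagrangian pair $(V^{iso},V^{iso*})$ with $(gV^{iso},gV^{iso*})$, so the problem transforms into one of understanding how the ``block projection'' depends on the ambient Lagrangian pair.

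For the containment $\frak{sp}(V)^{\le r}\subseteq\varphi^{-1}(\frak{sl}(V^{iso})^{\le r})$: if $X\in\frak{sp}(V)^{\le r}$ with $\rk(X-\lambda\Id_V)\le r$, then the upper-left block $A-\lambda\Id_{V^{iso}}$ of $X-\lambda\Id_V$ has rank $\le r$ as a submatrix, so $\varphi(X)\in\frak{sl}(V^{iso})^{\le r}$. Since $\frak{sp}(V)^{\le r}$ is manifestly $\Sp(V)$-invariant, it sits inside the preimage as an $\Sp(V)$-invariant subset.

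For maximality, suppose the $\Sp(V)$-orbit of some $X\in\frak{sp}(V)$ lies in $\varphi^{-1}(\frak{sl}(V^{iso})^{\le r})$. By the first paragraph this is equivalent to: for every Lagrangian pair $(L,L')$, the block projection $A_{L,L'}(X):=\pr_L\circ X|_L$ (along $L'$) satisfies $\rk(A_{L,L'}(X)-\mu\Id_L)\le r$ for some $\mu\in\mathbb F$. I argue the contrapositive: assuming $\rk(X-\lambda\Id_V)>r$ for every $\lambda$, I produce a single Lagrangian pair at which no $\mu$ works. The main technical input is the rank-preservation analog of Lemma~\ref{Lrkmaxsp}(b) for~$\varphi$, applied to the (generally non-Lie-algebraic) element $Y=X-\mu\Id_V\in\End V$: since $A_{L,L'}(X)-\mu\Id_L=\pr_L\circ Y|_L$, for generic $(L,L')$ and each fixed~$\mu$ one obtains $\rk(A_{L,L'}(X)-\mu\Id_L)=\min(\rk(X-\mu\Id_V),d_V/2)>r$.

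The main obstacle is enforcing this inequality for all $\mu$ simultaneously at a single Lagrangian pair. I plan to exploit that a witness $\mu$ must be an eigenvalue of $A_{L,L'}(X)$ of geometric multiplicity at least $d_V/2-r$, hence such $\mu$ are finite in number for each fixed~$(L,L')$. The incidence locus $\{(L,L',\mu):\rk(A_{L,L'}(X)-\mu\Id_L)\le r\}$ is Zariski-closed; by the rank-preservation statement combined with the hypothesis, each fixed-$\mu$ slice is a proper closed subvariety of the space of Lagrangian pairs. For generic $(L,L')$ the matrix $A_{L,L'}(X)$ has simple spectrum, so no $\mu$ can achieve the required multiplicity, and a Zariski-open set of Lagrangian pairs admits no witness~$\mu$ at all. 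This contradicts the standing hypothesis and forces $X\in\frak{sp}(V)^{\le r}$.
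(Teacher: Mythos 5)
Your containment direction and the reduction of maximality to the following statement are fine: if $\rk(X-\lambda\Id_V)>r$ for all $\lambda$, then some transverse Lagrangian pair $(L,L')$ satisfies $\rk\bigl(\pr_L\circ(X-\mu\Id_V)|_L\bigr)>r$ for all $\mu$. The per-$\mu$ generic-rank claim is also believable (though it needs its own argument inside the variety of Lagrangian pairs: Lemma~\ref{Lrkmaxsp} concerns the orthogonal projection onto a nondegenerate subspace $W$, not Lagrangian compressions of the element $X-\mu\Id_V\notin\frak{sp}(V)$; and it silently requires $r<\frac{d_V}2$, which is indeed the only regime in which the lemma is used). The genuine gap is the step that handles all $\mu$ simultaneously. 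Your assertion that $A_{L,L'}(X)$ has simple spectrum for generic $(L,L')$ is false in general: since $A_{L,L'}(X)-\mu_0\Id_L=\pr_L\circ(X-\mu_0\Id_V)|_L$, one always has $\rk(A_{L,L'}(X)-\mu_0\Id_L)\le\rk(X-\mu_0\Id_V)$, so any $\mu_0$ with $\rk(X-\mu_0\Id_V)\le\frac{d_V}2-2$ is an eigenvalue of \emph{every} compression with geometric multiplicity at least $2$. Such $X$ are compatible with your standing hypothesis as soon as $r\le\frac{d_V}2-3$: for instance $r=0$, $X$ nilpotent of rank $1$, $d_V\ge6$, where every compression has $0$ as an eigenvalue of multiplicity at least $\frac{d_V}2-1$. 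So the linchpin of your uniformity argument fails precisely in the range where it is needed.

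The fallback reasoning does not close this gap either: knowing that each fixed-$\mu$ slice of the incidence locus is a proper closed subvariety and that witnesses are finite for each fixed pair only bounds the dimension of the incidence locus by the dimension of the pair variety (codimension $\ge1$ slices over $\mathbb A^1$ plus one for the $\mu$-direction), so its image under the finite-fibre projection could still be dense; a union over infinitely many $\mu$ of proper closed subsets may well cover everything. What you actually need is that a generic compression has no eigenvalue of geometric multiplicity $\ge\frac{d_V}2-r$, which is exactly the statement being proven, so substituting it for ``simple spectrum'' would be circular without a new input. A workable repair is to drop pure genericity at this point and build one adapted Lagrangian pair explicitly from the eigenvalue and Jordan data of $X$ (in the spirit of the explicit bases used in the proof of Lemma~\ref{Lsl2}, or of Lemma~\ref{Lbs}), arranging by hand that for every candidate $\mu$ the compression retains rank at least $r+1$. (The paper itself offers no proof to compare with --- the lemma is left as an exercise --- but as written your argument does not yet constitute one.)
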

\begin{proof} Exercise in linear algebra.\end{proof}
\begin{lemma}\label{Lla2}Let $x\in$Gr$(r; V)$ and let $V(x)\subset V$ be the corresponding subspace. Let $W'$ be the S$(W)$-stable complement to $W$ in $V$. Assume that $d_V\ge2d_W$, $d_W\le r\le d_V-d_W$, and dim$(V(x)\cap W')=r-d_W$. Then the stabilizer of $x\in$Gr$(r; V)$ in SL$(W)$ (and therefore also in SO$(W)$, Sp$(W$)) is trivial.\end{lemma}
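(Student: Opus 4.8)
The plan is to translate the statement into elementary linear algebra inside $V$ with its decomposition $V=W\oplus W'$, on which an element $g\in\mathrm{SL}(W)$ acts by $g$ on $W$ and by the identity on $W'$. Since $\mathrm{SO}(W)$ and $\mathrm{Sp}(W)$ are subgroups of $\mathrm{SL}(W)$, it is enough to treat $\mathrm{SL}(W)$: one fixes $g\in\mathrm{SL}(W)$ with $g(V(x))=V(x)$ and must conclude $g=\mathrm{id}$.

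First I would rephrase the numerical hypothesis. Let $\pr\colon V\to W$ and $\pr'\colon V\to W'$ be the projections attached to $V=W\oplus W'$. Since $\Ker(\pr|_{V(x)})=V(x)\cap W'$ and $\pr(V(x))\subseteq W$, one always has $\dim(V(x)\cap W')\ge r-d_W$, with equality precisely when $\pr(V(x))=W$. I would then invoke the complementary transversality $V(x)\cap W=0$, which holds for generic $x$ once $r\le d_V-d_W=\dim W'$ and is exactly the input the argument needs. Granting it, $\pr'|_{V(x)}$ is injective, so $U:=\pr'(V(x))$ is an $r$-dimensional subspace of $W'$ and $V(x)$ is the graph $\{\,u+\rho(u)\mid u\in U\,\}$ of the linear map $\rho:=\pr\circ(\pr'|_{V(x)})^{-1}\colon U\to W$; moreover $\Imm\rho=\pr(V(x))=W$, so $\rho$ is surjective.

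Then I would run the stabilizer computation. For $u\in U$ one has $u\in W'$ and $\rho(u)\in W$, hence $g(u+\rho(u))=u+g(\rho(u))$. If this vector lies in $V(x)$ it equals $u_1+\rho(u_1)$ for some $u_1\in U$; applying $\pr'$, which kills both $g(\rho(u))$ and $\rho(u_1)$, gives $u_1=u$, so $g(\rho(u))=\rho(u)$. Since $u$ ranges over $U$ and $\rho(U)=W$, this forces $g$ to fix $W$ pointwise, i.e. $g=\mathrm{id}$. This proves the claim for $\mathrm{SL}(W)$, and a fortiori for $\mathrm{SO}(W)$ and $\mathrm{Sp}(W)$.

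The crux is the transversality $V(x)\cap W=0$. The dimension equality $\dim(V(x)\cap W')=r-d_W$ alone does not keep $V(x)$ from meeting $W$ nontrivially — for instance $V(x)=W\oplus K$ with $K\subset W'$ of dimension $r-d_W$ satisfies it yet is fixed by all of $\mathrm{SL}(W)$ — so one reads the lemma with $x$ generic, where the hypothesis $r\le d_V-d_W$ yields $V(x)\cap W=0$. It is precisely this that realizes $V(x)$ as a graph over a subspace of $W'$ (equivalently, makes $\pr'|_{V(x)}$ injective) and, together with the surjectivity of $\rho$, rigidifies the picture enough to force $g=\mathrm{id}$. All the remaining steps are the routine linear algebra flagged in the surrounding lemmas.
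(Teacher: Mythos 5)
Your argument is correct, and since the paper disposes of this lemma with the words ``exercise in linear algebra'' there is no written proof to compare it with; the computation you give (reduce to $\mathrm{SL}(W)$, note that $\dim(V(x)\cap W')=r-d_W$ says exactly that $\pr_W(V(x))=W$, realize $V(x)$ as the graph of a surjective map $\rho\colon U\to W$ with $U\subset W'$, and use that the stabilizing element acts as the identity on $W'$ to force $g|_W=\mathrm{id}$) is surely the intended one.

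Your caveat about the transversality $V(x)\cap W=0$ is accurate, and it points to an imprecision in the statement rather than a gap in your proof: taking $r=d_W$ and $V(x)=W$, or more generally $V(x)=W\oplus K$ with $K\subset W'$ of dimension $r-d_W$, all the printed hypotheses ($d_V\ge 2d_W$, $d_W\le r\le d_V-d_W$, $\dim(V(x)\cap W')=r-d_W$) are satisfied while the stabilizer is all of $\mathrm{SL}(W)$, so the conclusion cannot follow from the hypotheses as written. The missing condition $V(x)\cap W=0$ is precisely what your graph construction needs, and it is harmless in context: the lemma is invoked only in the proof of Lemma~\ref{Lsl1}, for a generic point $x$ of $\Gr(^kd_V(S);V)$ with $d_W\le d_V(S)\le d_V-d_W$, where both $\dim(V(x)\cap W')=\dim V(x)-d_W$ and $V(x)\cap W=0$ hold. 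With the lemma read that way, your argument, together with the restriction to the subgroups $\mathrm{SO}(W)$ and $\mathrm{Sp}(W)$, settles the exercise completely.
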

\begin{proof} Exercise in linear algebra.\end{proof}
For a subset $S\subset\mathbb F$ we set $V^X_S:=\oplus_{\lambda\in
S}V^X_\lambda$. We set also $\hat 0:=\mathbb F\backslash\{0\}$; then
$V=V_0^X\oplus V_{\hat 0}^X$. Let $X_{nn}:=X|_{V_0},
X_r:=X|_{V_{\hat 0}}$, and $X_r=X_s+X_{ns}$ be the Jordan
decomposition as sum of commuting semisimple and nilpotent elements.
The decomposition $V=V_0^X\oplus V_{\hat 0}^X$ allows as to consider
all four operators $X_{nn}, X_r, X_s, X_{ns}$ as endomorphisms of
$V$. Furthermore,
\begin{center}$\rk(X_s+X_{ns})=\rk X_s,\hspace{10pt}\rk(X_s+X_{nn})=\rk X$,\end{center}and $X_s+X_{nn}\in\overline{\EuScript O[X]}$.
\begin{definition}We say that $X\in$End$V$ is {\it rank-reduced} whenever \begin{center}$X_{nn}^2=0$ and $X_{ns}=0$.\end{center}\end{definition}

\begin{lemma}\label{Lrr}If for some $X\in\frak s(V)$ we have 2$\rk X\le d_V$, then there
exists a (unique up to conjugation) rank-reduced element $X'\in\frak
s(V)$ such that $X'\subset\overline{\EuScript O[X]}$ and $\rk X=\rk
X'$.\end{lemma}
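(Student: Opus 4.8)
The plan is to reduce an arbitrary $X\in\frak s(V)$ with $2\rk X\le d_V$ to a rank-reduced representative inside $\overline{\EuScript O[X]}$ by performing two independent degenerations: one on the ``nilpotent-on-zero-eigenspace'' part $X_{nn}$ and one on the ``non-zero-eigenvalue'' part $X_r=X_s+X_{ns}$. Recall that the preamble already records the key identities $\rk(X_s+X_{nn})=\rk X$ and $X_s+X_{nn}\in\overline{\EuScript O[X]}$; so the first step is simply to replace $X$ by $X_s+X_{nn}$, which kills $X_{ns}$ at no cost to the rank and stays in the orbit closure. It remains to further degenerate $X_{nn}$ (a nilpotent endomorphism of $V_0^X$ commuting with everything and lying in the appropriate classical subalgebra $\frak s(V_0^X)$) to a nilpotent $X'_{nn}$ with $(X'_{nn})^2=0$, while keeping $X_s+X'_{nn}$ in $\overline{\EuScript O[X]}$ and while not changing the total rank.

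First I would isolate the claim on $V_0^X$: inside the classical Lie algebra $\frak s(V_0^X)$ (which is of the same type $\frak{sl},\frak{so},\frak{sp}$ as $\frak s(V)$ by the choice of form), the nilpotent $X_{nn}$ lies in the closure of an orbit containing a ``square-zero'' nilpotent, and one can choose such a square-zero nilpotent whose rank is at most $\rk X_{nn}$ but large enough so that $\rk X_s + \rk X'_{nn}$ still equals $\rk X = \rk X_s + \rk X_{nn}$ — wait, that forces $\rk X'_{nn}=\rk X_{nn}$, so what we actually need is a square-zero nilpotent of rank \emph{exactly} $\rk X_{nn}$ in $\overline{\EuScript O[X_{nn}]}\subset\frak s(V_0^X)$. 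This is a standard fact about nilpotent orbit closures: for a nilpotent $N$ the maximal rank achieved by square-zero elements of $\overline{\EuScript O[N]}$ is $\min(\dim\Imm N,\ \dim\Ker N)$, and the hypothesis $2\rk X\le d_V$ (which gives $\rk X_{nn}\le \tfrac12\dim V_0^X$, hence $\rk X_{nn}\le\dim\Ker X_{nn}|_{V_0}$, since $\rk X_s\le\tfrac12\dim V_{\hat0}^X$ as well) is exactly what guarantees a square-zero element of rank $\rk X_{nn}$ exists in that closure. Concretely, in the partition language: $X_{nn}$ corresponds to a partition $\lambda$ of $d_{V_0}$ (with the parity constraints for $\frak{so}/\frak{sp}$), its rank is $d_{V_0}-(\text{number of parts})$, and the ``collapse'' of $\lambda$ to a partition with all parts $\le 2$ having the same number of parts lies below $\lambda$ in the dominance order and hence in the orbit closure; the parity/type constraints are automatically satisfied by the collapse. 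Then $X':=X_s+X'_{nn}$ has $(X'_{nn})^2=0$ and $X'_{ns}=0$, i.e. $X'$ is rank-reduced, with $\rk X'=\rk X_s+\rk X'_{nn}=\rk X$, and $X'\in\overline{\EuScript O[X]}$ because degeneration of $X_{nn}$ inside $\frak s(V_0^X)$ (which commutes with $X_s$ on the complementary summand) induces a degeneration of $X_s+X_{nn}$ inside $\frak s(V)$.

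For uniqueness up to conjugation: a rank-reduced element $X'$ is determined by its semisimple part $X'_s$ (with non-zero eigenvalues, up to conjugacy a point of a Cartan modulo Weyl group, and $X'_s$ together with the multiplicities of eigenvalues is a conjugacy invariant) together with the single number $\rk X'_{nn}=\dim\Ker(X'_{nn})^{\perp}$ inside $V_0$, since a square-zero nilpotent in a classical Lie algebra is determined up to conjugacy by its rank. But $X'_s\in\overline{\EuScript O[X_s]}$ forces $X'_s$ conjugate to $X_s$ (a semisimple orbit is closed), and then $\rk X'_{nn}=\rk X'-\rk X'_s=\rk X-\rk X_s=\rk X_{nn}$ is forced as well; hence $X'$ is unique up to $\mathrm S(V)$-conjugacy. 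The main obstacle I anticipate is book-keeping the type constraints in the $\frak{so}$ and $\frak{sp}$ cases — verifying that the collapsed partition still satisfies the parity conditions defining a genuine nilpotent orbit in $\frak s(V_0^X)$, that the square-zero collapse is still $\le\lambda$ in the relevant (Gerstenhaber--Hesselink) closure order for type $B/C/D$, and that the $\frak{sl}$-complement-splitting in the $\frak{sl}$ case does not obstruct the induced degeneration of $X_s+X_{nn}$ in $\frak s(V)$; all of this is ``an exercise in linear algebra / nilpotent orbit combinatorics'' of the same flavour as the earlier lemmas, but it is where the real content sits.
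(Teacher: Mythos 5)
Your argument follows essentially the same route as the paper: pass from $X$ to $X_s+X_{nn}\in\overline{\EuScript O[X]}$, degenerate $X_{nn}$ to a square-zero nilpotent of the same rank (possible because $2\rk X_{nn}\le d_V$, via the partition collapse, which is what the paper extracts from~\cite{CM}), and get uniqueness from the facts that the semisimple part is fixed on the orbit closure and that square-zero nilpotents of fixed rank are conjugate up to outer automorphisms. The only blemish is your side formula $\min(\dim\Imm N,\dim\Ker N)$ for the maximal rank of square-zero elements in $\overline{\EuScript O[N]}$, which is not correct in general (e.g.\ for the partition $(3,3)$ one has $(2,2,2)$ in the closure), but it is used only in the regime $\rk N\le\dim\Ker N$ where your subsequent collapse computation correctly establishes what is needed.
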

\begin{proof}The condition $(X'_{nn})^2=0$ means that  $X_{nn}'$ is
nilpotent and the sizes of its Jordan blocks are at most 2$\times2$.
Moreover, the number of non-zero Jordan blocks of $X_{nn}'$ equals
$\rk X_{nn}'$. In particular, all nilpotent rank-reduced
endomorphisms in $\frak s(V)$ of fixed rank are conjugate (possibly,
by outer automorphisms of $\frak s(V)$). Since $2\rk X_{nn}\le
d_V$, there exists a nilpotent rank-reduced endomorphism $X_{rr}$ of
rank equal to $\rk X$ and such that $X_{rr}\in\overline{\EuScript
O[X_{nn}]}$~\cite{CM}. A rank-reduced endomorphism $X'\in\frak
s(V)$ such that $X_s'=X_s$ and $X'_{nn}$ is conjugate to $X_{rr}$,
is as desired. By construction, $X'$ is unique up to conjugation.
\end{proof}
\begin{lemma}\label{Lsl1}Assume $d_V>3d_W$. If for some $X\in\frak s(V)$ the image $\phi(\EuScript O[X])$ is not dense in $\frak s(W)$, then there exists a unique $\lambda\in\mathbb F$ such that \begin{center}$\rk(X-\lambda$Id$_V)<d_W$.\end{center}\end{lemma}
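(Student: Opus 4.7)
The plan is to handle uniqueness and existence separately: uniqueness is a quick linear-algebra dimension count, while existence is the substantive content and is best proved by contrapositive, combining the moment-map technology of Proposition~\ref{Pvi} with the Grassmannian reduction of Lemma~\ref{Lcgr} and the stabilizer vanishing of Lemma~\ref{Lla2}.

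For uniqueness, suppose for contradiction that two distinct scalars $\lambda_1 \neq \lambda_2$ both satisfy $\rk(X - \lambda_i\Id_V) < d_W$. Then each geometric eigenspace $\Ker(X - \lambda_i\Id_V)$ has dimension strictly greater than $d_V - d_W$, and since eigenspaces at distinct eigenvalues meet only in $0$, their direct sum is a subspace of $V$ of dimension exceeding $2(d_V - d_W)$. The hypothesis $d_V > 3d_W$ (in fact the weaker $d_V > 2d_W$ already suffices) forces $2(d_V - d_W) > d_V$, a contradiction.

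For existence I would argue by contrapositive, assuming $\rk(X - \lambda\Id_V) \ge d_W$ for every $\lambda \in \mathbb F$ and deducing that $\phi(\EuScript O[X])$ is dense in $\frak s(W)$. Equip $\EuScript O[X]$ with its Kirillov--Kostant symplectic form; the restricted $S(W)$-action is then Hamiltonian with moment map exactly $\phi|_{\EuScript O[X]}$, so Proposition~\ref{Pvi} identifies $\dim \phi(\EuScript O[X])$ with $\mathrm m_{S(W)}(\EuScript O[X])$. Density is therefore equivalent to exhibiting a single $Y \in \EuScript O[X]$ with a zero-dimensional $S(W)$-stabilizer. To produce such a $Y$, I would invoke Lemma~\ref{Lcgr} to pass to a finite $S(V)$-equivariant cover $\tilde{\EuScript O}$ of $\EuScript O[X]$ admitting an $S(V)$-equivariant surjection onto $S(V)/P$ for a maximal parabolic $P \subset S(V)$; for the classical groups at hand this quotient identifies with a (possibly isotropic) Grassmannian $\Gr(^{k}r; V)$. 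The maximal parabolic $P$ can be tuned by choosing which invariant subspace it stabilizes: combining the generalized-eigenspace decomposition of $X$ with kernel filtrations $\Ker(X-\mu\Id)^i$ on each generalized eigenspace produces a rich family of admissible indices $r$. A direct arithmetic check, using $\rk(X - \lambda\Id_V) \ge d_W$ for every $\lambda$ together with $d_V > 3d_W$, shows that at least one admissible $r$ lies inside the window $[d_W, d_V - d_W]$ required by Lemma~\ref{Lla2}; that lemma then exhibits a generic point $x \in \Gr(^{k}r; V)$ with trivial $S(W)$-stabilizer, and pulling $x$ back through the finite cover gives the desired $Y$ (the finite fiber contributes only a finite factor and preserves the triviality of the stabilizer).

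The main obstacle is the arithmetic of locating an admissible Grassmannian index $r$ in the window $[d_W, d_V - d_W]$ uniformly across all Jordan types of $X$ and all three series $\frak{sl}, \frak{so}, \frak{sp}$. In the orthogonal and symplectic cases the nonzero eigenvalues of $X$ pair up as $\pm\mu$ and the admissible parabolics correspond to isotropic flags, so the pool of available $r$'s is more restricted and the subset-sum/kernel-filtration bookkeeping must respect the $\pm$ symmetry; the strict inequality $d_V > 3d_W$, rather than the $d_V \ge 2d_W$ appearing in Lemma~\ref{Lla2}, supplies precisely the slack needed to ensure that some admissible $r$ falls in the window in all of these cases. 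A subsidiary point to monitor is that the hypothesis of Lemma~\ref{Lla2}, namely $\dim(V(x) \cap W') = r - d_W$, is a generic condition on $\Gr(^{k}r; V)$ for $r \ge d_W$, so it is automatically satisfied by generic $x$ after the pullback.
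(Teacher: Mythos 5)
Your strategy is viable, and in its existence step it is genuinely different from the paper's. The paper also starts from the moment-map identity (Proposition~\ref{Pvi}), equivariant maps to Grassmannians and Lemma~\ref{Lla2}, but it uses only the maps $Y\mapsto\oplus_{\mu\in S}V^Y_\mu$ to prove the dichotomy ``$d_V(S)<d_W$ or $d_V(S)>d_V-d_W$ for every $S$'', deduces a dominant eigenvalue $\lambda$ with $\dim V^X_\lambda>d_V-d_W$, and then---this is the step your route replaces---degenerates inside $\overline{\EuScript O[X]}$ to elements $X_{r'}$ with $\rk(X_{r'}-\lambda\mathrm{Id}_V)=r'$ of prescribed intermediate rank in order to exclude $\rk(X-\lambda\mathrm{Id}_V)\ge d_W$. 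You instead argue by contraposition and enlarge the family of canonical subspaces from sums of generalized eigenspaces to sums of kernels of powers, which lets you stay on the orbit itself and avoid the orbit-closure/rank-reduction analysis; the uniqueness count is the same easy estimate (left implicit in the paper).

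Two points in your write-up need repair or completion. First, Lemma~\ref{Lcgr} is the wrong vehicle: it produces one parabolic dictated by $X$ (and for $\frak{so},\frak{sp}$ its $G/P$ is an \emph{isotropic} Grassmannian with $r\le d_V/2$), so it cannot be ``tuned'', and your kernel-filtration subspaces are typically non-isotropic, hence never arise as such a $G/P$. Fortunately you do not need it: $Y\mapsto\oplus_{\mu\in S}\Ker(Y-\mu\mathrm{Id}_V)^{i_\mu}$ is already an $\mathrm{S}(V)$-equivariant map from $\EuScript O[X]$ onto the $\mathrm{S}(V)$-orbit of the canonical subspace inside $\Gr(^kr;V)$ (no covering, no parabolic), and a point of that orbit with trivial $\mathrm{S}(W)$-stabilizer pulls back to a point of $\EuScript O[X]$ with finite stabilizer, which by Proposition~\ref{Pvi} gives density. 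Second, the ``direct arithmetic check'' is the crux and must actually be carried out; it does go through: if $\dim\Ker(X-\mu\mathrm{Id}_V)\ge d_W$ for some $\mu$, then this dimension is already in $[d_W,d_V-d_W]$ because $\rk(X-\mu\mathrm{Id}_V)\ge d_W$; otherwise refine the chain $0\subset\Ker(X-\mu_1\mathrm{Id}_V)\subset\Ker(X-\mu_1\mathrm{Id}_V)^2\subset\dots\subset V^X_{\mu_1}\subset V^X_{\mu_1}\oplus\Ker(X-\mu_2\mathrm{Id}_V)\subset\dots\subset V$, whose successive increments are all bounded by $\max_\mu\dim\Ker(X-\mu\mathrm{Id}_V)<d_W$, so the first member of dimension $\ge d_W$ has dimension $<2d_W\le d_V-d_W$ by $d_V>3d_W$. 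With these adjustments your argument closes; note finally that your ``subsidiary point''---that a generic point of $\Gr(^kr;V)$ with the \emph{correct} $k$ meets $W'$ in dimension $r-d_W$ so that Lemma~\ref{Lla2} applies---is exactly the same implicit genericity step the paper itself relies on, so you are on equal footing there.
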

\begin{proof}
Set $d_V(\mu):=$dim$V^X_\mu$ and $d_V(S):=\sum_{\mu\in S}d_V(\mu)$ for any subset
$S\subset\mathbb F$. We have an S$(V)$-equivariant morphism
\begin{center}$\EuScript O[X]\to$Gr$(d_V(S); V),\hspace{10pt}X\longmapsto\oplus_{\mu\in
S}V^X_\mu$.\end{center} As $\phi(\EuScript O[X])$ is not dense in
$\frak s(W)$, the inequality m$_{\mathrm{S}(W)}(\EuScript
O[X])<$dim~S$(W)$ holds. Therefore, for any $S\subset\mathbb F$,
there exists $k$ such that
\begin{center}{}\hspace{90pt}m$_{\mathrm{S}(W)}(\Gr(^kd_V(S); V))<$dim~S$(W)$.\hspace{90pt} (3)\end{center}

We now show that, for any $S\subset\mathbb F$, either $d_V(S)<d_W$
or $d_V(S)>d_V-d_W$. Assume to the contrary that $d_W\le d_V(S)\le
d_V-d_W$. Let $x\in$Gr$(^kd_V(S); V)$ be a generic point and
$V(x)\subset V$ be the corresponding subspace. Then
\begin{center}$\dim(V(x)\cap W')=\dim V(x)-d_W$,\end{center}where
$W'$ is the S$(W)$-stable complement to $W$ in $V$, and
Lemma~\ref{Lla2} implies that the stabilizer of $x$ in S$(W)$ is
trivial. This contradicts (3).

We claim next that there exists $\lambda\in\mathbb F$ such that
$d_V(\lambda)>d_V-d_W$. Indeed, assuming that $d_V(\lambda)<d_W$ for
all $\lambda\in\mathbb F$, we see that the inequality $3d_W<d_V$,
together with the alternative $d_V(S)<d_W$ or $d_V(S)>d_V-d_W$,
implies $d_V(S)<d_W$ for any finite set $S$. This is again a
contradiction.

Fix now $\lambda$ with $d_V(\lambda)>d_V-d_W$. It is easy to see
that $\rk (X-\lambda\Id_V)=d_V(\mathbb C\backslash\lambda)+\rk
(X-\lambda\Id_V)|_{V^X_\lambda}$. Furthermore, we have a
decomposition
$$X-\lambda\mathrm{Id}_V=(X-\lambda\mathrm{Id}_V)_s+(X-\lambda\mathrm{Id}_V)_{ns}+(X-\lambda\mathrm{Id}_V)_{nn}.$$
Let $r'\in\mathbb Z_{\ge0}$ for $\frak s=\frak{sl}$ and
$r'\in2\mathbb Z_{\ge0}$ for $\frak s=\frak{so}, \frak{sp}$. If
$r'<\rk ((X-\lambda$Id$_V)_{nn}|_{V^X_\lambda})$, there exists
$X_{r'}^\lambda\in\overline{\EuScript
O[(X-\lambda\mathrm{Id}_V)_{nn}|_{V^X_\lambda}]}\subset\frak
s(V^X_\lambda)$ with $\rk X_{r'}^\lambda=r'$. Hence, if
 $\rk ((X-\lambda$Id$_V)_s)\le r'\le\rk (X-\lambda$Id$_V$), there
exists
\begin{center}$X_{r'}\in\overline{\EuScript O[X]}\subset\frak s(V)$\end{center}
with $X_{r'}-\lambda$Id=$(X-\lambda$Id$)_s$+$X^{\lambda}_{r'-\mathrm{rk}((X-\lambda\mathrm{Id})_s)}$, and thus $\rk (X_{r'}-\lambda$Id$_V)=r'$.

As, for any $d$ with $d_W\le d\le d_V-d_W$, the equality m$_{\mathrm{S}(W)}($Gr$(^kd; V))=$dim~S$(W)$ holds for some $k$, for any
$r'$ we have either dim~Im$(X_{r'}-\lambda$Id$_V)<d_W$ or
dim~Im$(X_{r'}-\lambda$Id$_V)>d_V-d_W$. Therefore, either
 $\rk (X-\lambda$Id$_V)<d_W$ or $\rk (X-\lambda$Id$_V)_s>d_V-d_W>d_W$. On
the other hand, $\rk (X-\lambda$Id$_V)_s<d_W$. This
implies\begin{center}$\rk(X-\lambda$Id$_V)~<~d_W$.\end{center}\end{proof}
\begin{lemma}\label{Lsl2}Assume $d_V\ge2d_W$, $d_W>r$, and let $X\in\frak s(V)^{\le r}$. If $\rk (X-\lambda$Id$_V)=r$ for some $\lambda\in\mathbb F$, then $\phi(\EuScript O[X])$ is dense
in $\frak s(W)^{\le r}$.\end{lemma}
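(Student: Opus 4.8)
The plan is to prove $\overline{\phi(\EuScript O[X])}=\frak s(W)^{\le r}$ by two inclusions. For $\subseteq$: since $\rk(X-\lambda\Id_V)=r$, for every $g\in\mathrm S(V)$ the operator $gXg^{-1}-\lambda\Id_V$ has rank $r$, so by Lemma~\ref{Lrkmaxsp} a) the compression $\pr_W\circ\bigl((gXg^{-1}-\lambda\Id_V)|_W\bigr)$ has rank $\le r$; as $\phi(gXg^{-1})$ differs from this compression by a scalar operator, $\phi(gXg^{-1})\in\frak s(W)^{\le r}$. Since $\frak s(W)^{\le r}$ is Zariski closed (it is a zero-set, see (2)), $Z:=\overline{\phi(\EuScript O[X])}\subseteq\frak s(W)^{\le r}$; moreover $Z$ is irreducible, being the closure of the image of the irreducible variety $\EuScript O[X]$, and $\mathrm S(W)$-stable by the equivariance of $\phi$.

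For the opposite inclusion I would reduce to a claim about compressions of a single operator. Write $X=\lambda\Id_V+M$ with $\rk M=r$; for $\frak s=\frak{so},\frak{sp}$ one has $\lambda=0$ automatically, since $d_V\ge 2d_W>2r$ rules out a nonzero eigenvalue of $X$ of multiplicity $\ge d_V-r$ (its opposite would also have to occur). Since $\phi$, extended $\mathbb F$-linearly to $\mathrm{gl}(V)$, annihilates scalar operators for $\frak{sl}$, and $\lambda=0$ for $\frak{so},\frak{sp}$, we get $\phi(\EuScript O[X])=\{\phi(gMg^{-1})\mid g\in\mathrm S(V)\}$, with $\phi(gMg^{-1})$ the compression of $gMg^{-1}$ to $W$, trace-corrected when $\frak s=\frak{sl}$. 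As $g$ runs over $\mathrm S(V)$ the conjugates $gMg^{-1}$ exhaust the similarity class of $M$, so $\{\phi(gMg^{-1})\}$ is, up to the $\mathrm{GL}(W)$-conjugation built into the identification $g|_W$, precisely the set of (trace-corrected) compressions of the fixed rank-$r$ operator $M$ to $d_W$-dimensional subspaces of $V$. Now the trace-correction is a morphism carrying the variety $\mathcal R$ of rank-$\le r$ operators on $W$ onto $\frak s(W)^{\le r}$ --- for $\frak{sl}$ via the identity $Y=(Y-\mu\Id_W)-\tfrac{\mathrm{tr}(Y-\mu\Id_W)}{d_W}\Id_W$, and for $\frak{so},\frak{sp}$ simply because $\frak s(W)^{\le r}=\mathcal R$ (at least when $r\le\tfrac{d_W}{2}$) --- so by continuity the inclusion $\frak s(W)^{\le r}\subseteq Z$ follows once we know that the compressions of $M$ to $d_W$-subspaces of $V$ are dense in $\mathcal R$.

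This last, purely linear-algebraic statement is where I expect the real work to be. The set of those compressions is constructible, irreducible, invariant under $\mathrm{GL}(W)$-conjugation, and contained in $\mathcal R$, and by Lemma~\ref{Lrkmaxsp} b) it contains an operator of rank exactly $r$; but $\mathrm{GL}(W)$ has infinitely many orbits on $\mathcal R$, so these facts only pin down one orbit's worth, and the actual difficulty is to produce every eigenvalue configuration. I would settle it with a dimension count exploiting the slack $d_V\ge 2d_W$: a generic $W'\in\Gr(d_W;V)$ meets $\Ker M$ in dimension exactly $d_W-r$ and is transverse to $\Imm M$, so the compression $M|_{W'}$ has rank exactly $r$, and one checks that $W'\mapsto M|_{W'}$ (up to an isomorphism $W'\cong W$) is dominant onto $\mathcal R$ --- there is enough room because $\dim\Gr(d_W;V)=d_W(d_V-d_W)$ is as large as it needs to be, which is precisely where $d_V\ge 2d_W$ enters --- whence $Z$, an irreducible closed subvariety of $\mathcal R$ of full dimension, must equal $\mathcal R$. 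An alternative that sidesteps invariant theory is to realize each Jordan type of rank $\le r$ on $W$ as $M|_{W'}$ for an explicit choice of $W'$, again using $d_V\ge 2d_W$ to guarantee the required freedom.
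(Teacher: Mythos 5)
Your $\subseteq$ direction and your reduction of $\supseteq$ to a statement about compressions of a single rank-$r$ operator $M$ are both sound, and the latter is a reasonable way to frame the lemma (with the caveat that for $\frak{so},\frak{sp}$ the conjugates of $M$ only give compressions along orthogonal splittings of $V$, so the relevant target is $\{Y\in\frak s(W):\rk Y\le r\}$ rather than all of $\mathcal R\subset\End W$; with $r\le d_W/2$ this indeed equals $\frak s(W)^{\le r}$, but you should say this rather than write $\mathcal R$).

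The genuine gap is the last paragraph. The whole content of the lemma is precisely the claim you defer to "one checks that $W'\mapsto M|_{W'}$ is dominant onto $\mathcal R$," and the argument you sketch does not establish it. Comparing $\dim\Gr(d_W;V)$ to $\dim\mathcal R$ shows at best that there is no dimensional obstruction; it does not show the map is dominant, and in particular does not control which conjugacy classes appear. Since $\mathrm{GL}(W)$ (and a fortiori $\mathrm{S}(W)$) has infinitely many orbits on $\mathcal R$, knowing the image is $\mathrm{S}(W)$-stable, irreducible, and of full dimension would suffice — but the full-dimension assertion is exactly what you haven't proved. You also leave the alternative ("realize each Jordan type ... by an explicit choice of $W'$") unexecuted, and the $\frak{so},\frak{sp}$ cases need additional care (the form must be nondegenerate on $W'$, $M$ is skew/Hamiltonian, and for $\frak{sp}$ with $r$ odd one cannot produce semisimple rank-$r$ compressions at all).

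The paper closes this gap by two devices you do not use. First, it invokes Lemma~\ref{Lrr} to replace $X$ by a rank-reduced element $X'\in\overline{\EuScript O[X]}$ of the same rank: this is legitimate since $\phi(\EuScript O[X'])\subset\overline{\phi(\EuScript O[X])}$, and it normalizes $X-\lambda\Id_V$ into a direct sum of $2\times2$ (resp.\ $4\times4$) blocks $A(t_i)$, $B$, and a zero block. Second, with $W$ chosen compatibly with this block decomposition, it computes $\phi$ on the subvariety $\bigoplus_i\EuScript O[A(t_i)]\oplus\bigoplus_j\EuScript O[B]\oplus C\subset\EuScript O[X]$ explicitly and shows it contains all (block-)diagonal matrices of rank $r$, i.e.\ a dense set of semisimple rank-$r$ elements (or, for $\frak{sp}$ with $r$ odd, the "regular" rank-$r$ elements); density of these in $\frak s(W)^{\le r}$ is then exactly Lemmas~\ref{Lrkspo} and~\ref{Lrkspe}. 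Your outline omits both the rank-reduction step and any replacement for this explicit computation, so as written it is an incomplete proof rather than an alternative one.
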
 We consider separately the cases
$\frak s=\frak{sl}$ and $\frak s=\frak{so}, \frak{sp}$.
\begin{proof}[Proof of Lemma~\ref{Lsl2} for $\frak s=\frak{sl}$] By Lemma~\ref{Lrr} we can assume that $X-\lambda$Id$_V$ is rank-reduced.
Therefore the linear operator $X-\lambda$Id$_V: V\to V$ is conjugate
to a direct sum
$$\bigoplus\limits_{1\le i\le\mathrm{rk}X_s}A(t_i)\bigoplus\limits_{1\le j\le\mathrm{rk}X_{nn}}B\bigoplus C,$$
where $C$ is the zero operator, and $A(t), B$ are operators of
the following forms:
$$A(t):=\left(\begin{array}{cc}t&0\\0&0\\\end{array}\right),\hspace{10pt}B:=\left(\begin{array}{cc}0&1\\0&0\\\end{array}\right).$$ We denote the elements of the respective bases of the 2-dimensional subspaces of $V$ corresponding to $A(t_i)$ and $B$ by $a_1[i], a_2[i]$ and $b_1[j],
b_2[j]$, and the elements of the basis of the subspace corresponding
to $C$ by $c[k]$. We can further assume that
\begin{center}$W:=$span$\{a_1[i], b_1[j], c[k]\mid
1\le i\le\mathrm{rk}X_s, 1\le j\le\mathrm{rk}X_{nn}, 1\le k\le
d_W-r$\}\end{center} and that the SL$(W)$-invariant complement $W'$ of $W$ is the span of the remaining
elements of the basis $\{a_1[i], a_2[i], b_1[j], b_2[j], c_k\}_{i,
j, k}$ of $V$. Then
$$\phi(\bigoplus_i\EuScript O[A(t_i)]\bigoplus_j\EuScript
O[B]\bigoplus C)\subset\frak{sl}(\bigoplus\limits_{1\le
i\le\mathrm{rk}X_s}\frak{gl}\langle a_1[i],
a_2[i]\rangle\bigoplus\limits_{1\le
j\le\mathrm{rk}X_{nn}}\frak{sl}\langle b_1[i], b_2[i]\rangle).$$
Moreover, $\phi(\bigoplus_i\EuScript O[A(t_i)]\bigoplus_j\EuScript
O[B]\bigoplus C)$ consists of all diagonal matrices of the form
\begin{center}diag$(\lambda_1,..., \lambda_r,0,...,
0),\hspace{10pt}\lambda_1+...+\lambda_r=0.$\end{center} This follows
from the obvious statement that the matrices in $\EuScript O[A(t)]$
for $t\ne 0,$ and in $\EuScript O[B]$ can have arbitrary values in
their left top corner. Therefore $\phi(\EuScript O[X])$ contains all
semisimple elements of a given rank $r=\rk X$. As such elements are
dense in $\frak{sl}(W)^{\le r}$, $\phi(\EuScript O[X])$ is dense in
$\frak{sl}(W)^{\le r}$.\end{proof} In the rest of this section
$\frak s=\frak{so}$, $\frak{sp}$. To prove Lemma~\ref{Lsl2} in this
case, we need some preliminary lemmas.
\begin{lemma}\label{Leig0}Let $X\in\frak s(V)$. Assume that 2$\rk(X-\lambda\Id_V)<d_V$ for some $\lambda\in\mathbb F$. Then $\lambda=0$.\end{lemma}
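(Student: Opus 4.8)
The plan is to exploit the skew-symmetry of $X$ with respect to the fixed bilinear form on $V$. Write $Y:=X-\lambda\Id_V$ and let $K:=\Ker Y$ be the $\lambda$-eigenspace of $X$ in $V$. By rank-nullity and the hypothesis $2\rk Y<d_V$ we have $\dim K=d_V-\rk Y>\tfrac{d_V}{2}$. The key observation to record is that $K$ must then be isotropic unless $\lambda=0$.

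To see this, let $\langle\cdot,\cdot\rangle$ denote the fixed nondegenerate form on $V$ (symmetric if $\frak s=\frak{so}$, antisymmetric if $\frak s=\frak{sp}$). Since $X\in\frak s(V)$, we have $\langle Xv,w\rangle=-\langle v,Xw\rangle$ for all $v,w\in V$. Taking $v,w\in K$ and using $Xv=\lambda v$, $Xw=\lambda w$, I would compute
$$\lambda\langle v,w\rangle=\langle Xv,w\rangle=-\langle v,Xw\rangle=-\lambda\langle v,w\rangle,$$
so that $2\lambda\langle v,w\rangle=0$; as the characteristic is zero, either $\lambda=0$ or $\langle v,w\rangle=0$.

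If $\lambda\ne 0$, this shows the form vanishes on $K\times K$, i.e. $K$ is an isotropic subspace of dimension $>\tfrac{d_V}{2}$. This contradicts the elementary fact that a nondegenerate symmetric or antisymmetric bilinear form on a $d_V$-dimensional space admits no isotropic subspace of dimension exceeding $\tfrac{d_V}{2}$. Hence $\lambda=0$, as claimed.

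I do not expect a genuine obstacle here: the argument is a couple of lines of linear algebra. The only subtlety worth flagging is that one should work with the honest eigenspace $\Ker(X-\lambda\Id_V)$ and not with the generalized eigenspace $V^X_\lambda$, since the latter need not be isotropic; and that the conclusion really uses that $\frak s$ is $\frak{so}$ or $\frak{sp}$ — for $\frak{sl}(V)$ the form is zero and the statement fails.
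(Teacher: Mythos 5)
Your proof is correct, but it follows a genuinely different route from the paper's. The paper works with the \emph{generalized} eigenspace $V^X_\lambda$: from $\rk(X-\lambda\Id_V)+\dim V^X_\lambda\ge d_V$ the hypothesis gives $2\dim V^X_\lambda>d_V$, and then the paper invokes the symmetry $\dim V^X_\lambda=\dim V^X_{-\lambda}$ for $\frak{so},\frak{sp}$ to conclude that $\lambda\ne 0$ would force $\dim V^X_\lambda+\dim V^X_{-\lambda}>d_V$ for two distinct (hence transversal) generalized eigenspaces, which is impossible. You instead work with the honest eigenspace $K=\Ker(X-\lambda\Id_V)$, get $\dim K>d_V/2$ from rank--nullity, and then deduce directly from skew-adjointness of $X$ that $K$ is totally isotropic when $\lambda\ne 0$, contradicting the dimension bound for isotropic subspaces. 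Both arguments are short and elementary; yours avoids invoking the symmetry of the generalized eigenspace dimensions and derives what is needed from the defining relation $\langle Xv,w\rangle=-\langle v,Xw\rangle$, whereas the paper's argument packages that same information into the known spectral symmetry of $\frak{so}(V)$ and $\frak{sp}(V)$. Your remark that the honest kernel (not the generalized eigenspace) is the right object to prove isotropic is a fair caution, and your note that the statement fails for $\frak{sl}$ correctly identifies the implicit hypothesis $\frak s=\frak{so},\frak{sp}$ under which the lemma is used in the paper.
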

\begin{proof}Obviously, $\rk (X-\lambda$Id$_V)+d_V(\lambda)\ge d_V$~(recall that $d_V(\lambda)=\dim V_\lambda^X$). Therefore 2$\rk(X-\lambda\Id_V)<d_V$ implies that $2d_V(\lambda)>d_V$.

However, for $\frak{so}, \frak{sp}$, we have $d_V(\lambda)=d_V(-\lambda)$. Therefore $\lambda\ne0$ would mean that $\lambda\ne-\lambda$ and $\dim V_\lambda^X+\dim V_{-\lambda}^X>d_V$, which is false. Hence $\lambda=0$.\end{proof}

Fix $r\in\mathbb Z_{\ge0}$. If $r$ is even we denote by $\frak
s(V)^r_{reg}$ the set of semisimple elements of $\frak s(V)$ which have
rank $r$. If $r$ is odd we denote by $\frak{sp}(V)^r_{reg}$ the set
of elements $X$ of $\frak{sp}(V)$ such that \begin{center}$\rk
X_{s}=r-1$, $X_{ns}=0$, $\rk X_{nn}=1$.\end{center}
In particular, $X_{nn}$ is nilpotent and has a
single non-zero Jordan block of size 2$\times$2.
\begin{lemma}\label{Lrkpre}a) Assume $d_V$ is even. Then $\overline{\frak{sp}(V)^{d_V-1}_{reg}}=\frak{sp}(V)^{\le(d_V-1)}$.\\b) Assume $d_V$ is odd. Then $\overline{\frak{so}(V)^{d_V-1}_{reg}}=\frak{so}(V)^{\le(d_V-1)}$.\end{lemma}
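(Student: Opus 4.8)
I would prove the two inclusions separately, the first being routine. Since $\frak s(V)^{\le(d_V-1)}$ is Zariski closed, to get $\overline{\frak s(V)^{d_V-1}_{reg}}\subseteq\frak s(V)^{\le(d_V-1)}$ it suffices to check that every $X\in\frak s(V)^{d_V-1}_{reg}$ satisfies $\rk(X-0\cdot\Id_V)\le d_V-1$, which is clear: in case b) such an $X$ is semisimple of rank $d_V-1$, and in case a) it has $\rk X=\rk X_s+\rk X_{nn}=(d_V-2)+1=d_V-1$ (since $X_s$ is invertible on $V^X_{\hat0}$, so $\rk X_s=\dim V^X_{\hat0}=d_V-2$).

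The reverse inclusion $\frak s(V)^{\le(d_V-1)}\subseteq\overline{\frak s(V)^{d_V-1}_{reg}}$ is the heart of the matter, and I would prove it by showing that $\frak s(V)^{d_V-1}_{reg}$ is dense in $\frak s(V)^{\le(d_V-1)}$. The input is that the semisimple part of any $X$ lies in $\overline{\EuScript O[X]}$, together with its sharpening for $\frak{sp}$ recorded in Lemma~\ref{Lrr}; reading these degenerations backwards reduces the task to locating the generic stratum of $\frak s(V)^{\le(d_V-1)}$. Case b) is then settled by the parity of $d_V$: for $d_V$ odd every element of $\frak{so}(V)$ has $0$ among its eigenvalues (the nonzero ones occur in pairs $\pm\mu$), so $\rk X\le d_V-1$ for all $X$ and $\frak{so}(V)^{\le(d_V-1)}=\frak{so}(V)$; on the other hand a regular semisimple element of $\frak{so}(V)$ then has a one-dimensional kernel, i.e. rank exactly $d_V-1$, hence lies in $\frak{so}(V)^{d_V-1}_{reg}$, and such elements are dense. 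Thus $\overline{\frak{so}(V)^{d_V-1}_{reg}}=\frak{so}(V)=\frak{so}(V)^{\le(d_V-1)}$.

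Case a) is where I expect the main difficulty: one must present an arbitrary element of $\frak{sp}(V)^{\le(d_V-1)}$---in particular an arbitrary nilpotent element, whose generalized $0$-eigenspace is all of $V$---as a limit of elements of $\frak{sp}(V)^{d_V-1}_{reg}$, whose generalized $0$-eigenspace is only two-dimensional with $X$ acting there as a single $2\times2$ nilpotent Jordan block. The structural reason this should work is the parity constraint for $\frak{sp}$: the generalized $0$-eigenspace of a symplectic operator has even dimension, so among singular symplectic operators the smallest attainable rank is $d_V-1$, and it is attained---inside the fibre of the adjoint quotient of $\frak{sp}(V)$ through a given $X$---precisely on the open, highest-dimensional orbit that is semisimple on the nonzero part and has a single $2\times2$ nilpotent block at $0$, i.e. on $\frak{sp}(V)^{d_V-1}_{reg}$. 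I would make this precise either by an explicit one-parameter degeneration---from $X$ pass to $X_s+X_{nn}$, render it rank-reduced via Lemma~\ref{Lrr}, deform $X_s$ on $V^X_{\hat0}$ to a regular semisimple element, and then peel pairs of nonzero eigenvalues $\pm t\nu_i$ off the nilpotent part $X_{nn}$ and let $t\rightarrow 0$, repeating until only a single $2\times2$ block survives---or, more compactly, by computing that $\frak{sp}(V)^{d_V-1}_{reg}$ is irreducible of dimension $\dim\frak{sp}(V)-1$ (a generic such element has centralizer of dimension $\tfrac{d_V}{2}$ and $\tfrac{d_V-2}{2}$ free eigenvalue parameters), noting $\frak{sp}(V)^{d_V-1}_{reg}\subseteq\frak{sp}(V)^{\le(d_V-1)}$, and checking that $\frak{sp}(V)^{\le(d_V-1)}$ is irreducible of the same dimension (for instance by flatness of the adjoint quotient of $\frak{sp}(V)$, whose fibres are irreducible). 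The bookkeeping in the eigenvalue-splitting degeneration---keeping the symplectic structure and staying inside the orbit closure at each step---is what I would expect to cost the most effort.
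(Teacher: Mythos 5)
The paper gives no proof of this lemma (it is dismissed as an ``exercise in linear algebra''), so there is nothing to compare you against; I can only assess your sketch on its own terms. Your easy inclusion and your case b) are correct and complete. For case a), your second, ``compact'' route is the one that actually works: $\frak{sp}(V)^{d_V-1}_{reg}$ is the $\Sp(V)$-saturation of an irreducible family of block-diagonal elements (invertible semisimple of rank $d_V-2$ plus one nonzero nilpotent $2\times 2$ block), hence irreducible of dimension $\dim\frak{sp}(V)-1$, your centralizer count being right; and the singular locus $\{X\in\frak{sp}(V):\rk X\le d_V-1\}=\{\det X=0\}$ is an irreducible hypersurface of the same dimension (your appeal to flatness of the adjoint quotient together with Kostant's irreducibility of its fibres does give this, via openness; a cheaper route is to write the singular locus as the $\Sp(V)$-saturation of the singular part of a Borel subalgebra), so the closed full-dimensional subset $\overline{\frak{sp}(V)^{d_V-1}_{reg}}$ must exhaust it. One caveat you should state explicitly: with definition (2) read literally (arbitrary $\lambda$), every operator has an eigenvalue, so $\frak{sp}(V)^{\le(d_V-1)}$ would be all of $\frak{sp}(V)$ and part a) would be false; like the paper's own later use of the lemma (proof of Lemma~\ref{Lrkspo}), you are implicitly and correctly reading $\frak{sp}(V)^{\le(d_V-1)}$ as $\{X:\rk X\le d_V-1\}$.

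Your first, ``explicit degeneration'' route, however, has a genuine directional error. Passing from $X$ to $X_s+X_{nn}$ and then rank-reducing via Lemma~\ref{Lrr} produces elements of $\overline{\EuScript O[X]}$, i.e.\ elements \emph{below} $X$ in the closure order; showing that those lie in $\overline{\frak{sp}(V)^{d_V-1}_{reg}}$ proves nothing about $X$ itself (and Lemma~\ref{Lrr} also requires $2\rk X\le d_V$, which can fail here). To argue element-wise you must keep $X$ fixed and approximate it: approximate the invertible part $X|_{V^X_{\hat 0}}$, including its nilpotent piece $X_{ns}$, by regular semisimple (hence invertible) elements of $\frak{sp}(V^X_{\hat 0})$, which reduces everything to showing that a nilpotent element of $\frak{sp}(V^X_0)$, and by $\Sp$-invariance and orbit closure just the single full-size Jordan block, is a limit of elements of the regular set --- e.g.\ along a Kostant section through it, or by an explicit perturbation splitting off distinct pairs $\pm t\nu_i$. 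Relatedly, your claim that inside the fibre of the adjoint quotient through a given $X$ the rank $d_V-1$ (you wrote ``smallest attainable rank''; it should be the largest rank of a singular element) is attained precisely on $\frak{sp}(V)^{d_V-1}_{reg}$ is false whenever $0$ occurs in that fibre with multiplicity greater than $2$: the regular orbit there carries a single nilpotent Jordan block of size $2m>2$ at $0$, so it has rank $d_V-1$ but does not lie in your set. This is exactly why one needs the across-fibre limit supplied by the irreducibility argument of your second route, which I would therefore make the actual proof.
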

\begin{proof}Exercise in linear algebra.\end{proof}

In connection with the following two lemmas, note that $\rk X$ is
even for any $X\in\frak{so}(V)$. Recall also that on $\frak s(V)$
there is the following partial order: $X_1$ is {\it lower} than
$X_2$ if $\EuScript O[X_1]\subset\overline{\EuScript O[X_2]}$. This
order can be described explicitly in terms of Young diagrams (or
equivalently, in terms of the Jordan normal forms of $X_1, X_2$),
see~\cite{CM}. 

\begin{lemma}\label{Lrkspo} Let $r$ be an odd integer such that $1\le r\le d_V$; assume that $d_V$ is even. Then $\frak{sp}(V)^{\le r}=\overline{\frak{sp}(V)^r_{reg}}$.\end{lemma}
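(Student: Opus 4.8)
The plan is to prove the two inclusions $\overline{\frak{sp}(V)^r_{reg}}\subseteq\frak{sp}(V)^{\le r}$ and $\frak{sp}(V)^{\le r}\subseteq\overline{\frak{sp}(V)^r_{reg}}$ separately, the first being immediate and the second reducing to a comparison of two irreducible varieties of equal dimension. For the first inclusion, I would check that every $X\in\frak{sp}(V)^r_{reg}$ has rank exactly $r$: since $X_{ns}=0$ and $X_s$, $X_{nn}$ act on the complementary subspaces $V^X_{\hat 0}$ and $V^X_0$ (with $X_s$ invertible on the former), one has $\rk X=\rk X_s+\rk X_{nn}=(r-1)+1=r$. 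Hence $X\in\frak{sp}(V)^{\le r}$ by taking $\lambda=0$ in~(2), and since $\frak{sp}(V)^{\le r}$ is Zariski closed, $\overline{\frak{sp}(V)^r_{reg}}\subseteq\frak{sp}(V)^{\le r}$.

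For the reverse inclusion I would first pass to the locus $\frak{sp}(V)^{\le r}_0:=\{X\in\frak{sp}(V):\rk X\le r\}$ corresponding to the shift $\lambda=0$ in~(2): using Lemma~\ref{Leig0} together with the fact that for $\frak{sp}$ nonzero eigenvalues come in pairs $\pm\mu$ of equal multiplicity, one sees that outside the bounded range $2r\ge d_V$ the set $\frak{sp}(V)^{\le r}$ coincides with $\frak{sp}(V)^{\le r}_0$; the remaining values of $d_V$, in particular $r=d_V-1$, are covered by Lemma~\ref{Lrkpre}a). So it suffices to prove $\overline{\frak{sp}(V)^r_{reg}}=\frak{sp}(V)^{\le r}_0$. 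Both sides are closed and $\mathrm{Sp}(V)$-stable, and $\overline{\frak{sp}(V)^r_{reg}}\subseteq\frak{sp}(V)^{\le r}_0$ because rank is lower semicontinuous; I would then show each side is irreducible and compute its dimension. For $\frak{sp}(V)^{\le r}_0$: the assignment sending $X$ to the form $(v,w)\mapsto\langle Xv,w\rangle$ is a rank-preserving linear isomorphism from $\frak{sp}(V)$ onto the space of symmetric forms on $V$, identifying $\frak{sp}(V)^{\le r}_0$ with the symmetric determinantal variety $\{S:\rk S\le r\}$, which is irreducible (it is the image of $A\mapsto AA^{\mathrm t}$) of dimension $d_Vr-\binom r2$.

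For $\overline{\frak{sp}(V)^r_{reg}}$: the set $\frak{sp}(V)^r_{reg}$ is the image of the irreducible family of quadruples $(V_1,V_2,X_1,X_2)$ with $V=V_1\perp V_2$ orthogonal, $\dim V_1=r-1$, $X_1\in\frak{sp}(V_1)$ regular semisimple and $X_2\in\frak{sp}(V_2)$ a rank-one nilpotent, under $(V_1,V_2,X_1,X_2)\mapsto X_1\oplus X_2$; hence it, and its closure, is irreducible. For generic such $X=X_1\oplus X_2$ one has $Z_{\frak{sp}(V)}(X)=Z_{\frak{sp}(V_1)}(X_1)\oplus Z_{\frak{sp}(V_2)}(X_2)$, there being no cross terms since $X_1$ is invertible and $X_2$ nilpotent, of dimension $\tfrac{r-1}2+\binom{d_V-r+1}2$ by the standard centralizer-dimension formula applied to the nilpotent orbit of type $(2,1^{d_V-r-1})$ in $\frak{sp}_{d_V-r+1}$; adding the $\tfrac{r-1}2$ parameters of the semisimple part $X_1$ gives $\dim\overline{\frak{sp}(V)^r_{reg}}=\dim\frak{sp}(V)-\binom{d_V-r+1}2$. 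An elementary identity gives $\dim\frak{sp}(V)-\binom{d_V-r+1}2=d_Vr-\binom r2$, so the two irreducible varieties have the same dimension; since one contains the other, they coincide, which finishes the proof.

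The step I expect to be the main obstacle is the dimension count for $\overline{\frak{sp}(V)^r_{reg}}$: one must verify that the generic orbit in $\frak{sp}(V)^r_{reg}$ really has the minimal possible centralizer — so that the parametrizing family is generically finite onto its image — and that no lower-rank boundary stratum of $\frak{sp}(V)^{\le r}_0$ has dimension exceeding $d_Vr-\binom r2$, i.e. that the closure of $\frak{sp}(V)^r_{reg}$ is not a proper subvariety. This is where the explicit description of the $\mathrm{Sp}(V)$-orbit closure order in terms of Young diagrams (invoked before the statement) does the real work. The identification of $\frak{sp}(V)^{\le r}_0$ with a symmetric determinantal variety, and hence its irreducibility, is the other point that needs care, but it is entirely standard.
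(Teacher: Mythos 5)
Your approach is genuinely different from the paper's. The paper argues constructively via the $X$-stable orthogonal decomposition $V=V_s\oplus V_n$ into the invertible-semisimple and nilpotent pieces of $X$: it handles $X|_{V_s}$ through the subalgebra of $\frak{sp}(V_s)$ isomorphic to $\frak{gl}_m$, and it dominates $X|_{V_n}$ in the orbit-closure order of~\cite{CM} by a single $(r_n+1)\times(r_n+1)$ Jordan block, which Lemma~\ref{Lrkpre} then places in $\overline{\frak{sp}(V_n)^{r_n}_{reg}}$. You instead argue by irreducibility and a dimension count; the identification of $\{X\in\frak{sp}(V):\rk X\le r\}$ with a symmetric determinantal variety via $X\mapsto\langle X\cdot,\cdot\rangle$ is a nice observation the paper does not exploit, and your centralizer arithmetic $\dim\frak{sp}(V)-\binom{d_V-r+1}{2}=d_Vr-\binom{r}{2}$ checks out. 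The paper's route is inductive and stays entirely inside orbit-closure combinatorics; yours is shorter once the determinantal-variety and centralizer facts from~\cite{CM} are taken as known.

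There is a genuine gap in your reduction to the rank locus $\frak{sp}(V)^{\le r}_0:=\{X:\rk X\le r\}$. Lemma~\ref{Leig0} forces $\lambda=0$ only under the strict inequality $2\rk(X-\lambda\Id_V)<d_V$, so it yields $\frak{sp}(V)^{\le r}=\frak{sp}(V)^{\le r}_0$ only when $2r<d_V$; and Lemma~\ref{Lrkpre}~a) covers only $r=d_V-1$. For odd $r\ge3$ the even dimensions with $r+3\le d_V\le 2r$ are left over, and there the two sides, with $\frak{sp}(V)^{\le r}$ read literally from~(2), actually differ: take $d_V=2r$ and $X$ semisimple with eigenvalues $\pm\lambda$ ($\lambda\ne0$), each of multiplicity $r$; then $\rk(X-\lambda\Id_V)=r$, so $X\in\frak{sp}(V)^{\le r}$, but $\rk X=2r>r$, so $X\notin\overline{\frak{sp}(V)^r_{reg}}$, whose elements have rank at most $r$. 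No dimension count can close this. Your argument is sound if one restricts to $2r<d_V$ or, equivalently, reads $\frak{sp}(V)^{\le r}$ as the plain rank locus for $\frak s=\frak{so},\frak{sp}$; the paper's own proof silently makes exactly that replacement, so this is not unique to your write-up, but your proposal should not claim the remaining dimensions are "covered" by Lemma~\ref{Lrkpre}~a).
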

\begin{proof} By definition $\frak{sp}(V)^r_{reg}\subset\frak{sp}(V)^{\le r}$. Therefore is sufficient to prove that any element $X$ of rank at most $r$ lies in $\overline{\frak{sp}(V)^r_{reg}}$. Fix $X\in\frak{sp}(V)^{\le r}$. There exists an $X$-stable orthogonal decomposition $V_s\oplus V_n=V$ such that $X|_{V_n}$ is nilpotent and $X|_{V_s}$ is given (in an appropriate basis) by a matrix of type
\begin{center}$A:=\left(\begin{array}{cc}F&0\\0&-F^t\\\end{array}\right).$\end{center}
Set $r_s:=\rk X|_{V_s}$, $r_n:=\rk X|_{V_n}$. Then $r_s$ and $d_{V_s}:=\dim V_s$ are even, and $r_s+r_n=r$.

To prove that $X\in\overline{\frak{sp}(V)^r_{reg}}$ it suffices to
prove that $X|_{V_s}\in\frak{sp}(V)^{r_s}_{reg}$ and
$X|_{V_n}\in\overline{\frak{sp}(V_n)^{r_n}_{reg}}$. First of all we
check that $X|_{V_s}\in\overline{\frak{sp}(V_s)^{r_s}_{reg}}$. The
matrices of type $A$ form a subalgebra isomorphic to $\frak{gl}_m$
for $m=\frac{d_{V_s}}2$. As
$X|_{V_s}\in\frak{gl}_m^{\le\frac{r_s}2}$, we have
$X|_{V_s}\in\overline{(\frak{gl}_m)_{ss}^{\frac{r_s}2}}$. Therefore
$X|_{V_s}\in\overline{\frak{sp}(V_s)^{r_s}_{reg}}$.

Set $X':=X|_{V_n}$. It remains to show that $X'\in\overline{\frak{sp}(V_n)^{r_n}_{reg}}$ (note that $\rk X'=r_n$).

Let $X_{max}\in\frak{sp}(V_n)$ be a nilpotent element of rank $r_n$
with a single non-zero Jordan block of size $(r_n+1)\times(r_n+1)$.
Such an element of $\frak{sp}(V_n)$ exists by~\cite{CM}, and
moreover, $X'\in\overline{\EuScript O[X_{max}]}$.

It suffices to prove that
$X_{max}\in\overline{\frak{sp}(V_n)^{r_n}_{reg}}$. Using the
explicit description of the coadjoint orbits of a simplectic group
given in~\cite{CM} one can check that there exists an
$X_{max}$-stable orthogonal decomposition $V_n'\oplus V_n''=V$ such
that\begin{center}$X_{max}|_{V_n''}=0$, $d_{V_n'}=\rk
X_{max}+1$.\end{center} By Lemma~\ref{Lrkpre},
$X_{max}|_{V_n'}\in\frak{sp}(V_n')^{r_n}_{reg}$, and thus
$X_{max}\in\overline{\frak{sp}(V_n)^{r_n}_{reg}}$.\end{proof}
\begin{lemma}\label{Lrkso}\label{Lrkspe}  Let $r$ be an even integer such that $2\le r<d_V$. Then \begin{center}$\frak s(V)^{\le r}=\overline{\frak s(V)^r_{reg}}$.\end{center}\end{lemma}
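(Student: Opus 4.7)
I will prove the two inclusions separately. The inclusion $\overline{\frak s(V)^r_{reg}}\subseteq\frak s(V)^{\le r}$ is immediate: any semisimple $X\in\frak s(V)$ of rank $r$ satisfies $\rk(X-0\cdot\Id_V)=r$, so $X\in\frak s(V)^{\le r}$, and the latter set is Zariski-closed (being cut out by rank conditions, with the minimum taken over $\lambda$).

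For the reverse inclusion $\frak s(V)^{\le r}\subseteq\overline{\frak s(V)^r_{reg}}$, my plan has two parts. First, I reduce to the case $\rk X\le r$ using Lemma~\ref{Leig0}: when $r<d_V/2$ that lemma forces $\lambda=0$ in the definition of $\frak s(V)^{\le r}$, so $\frak s(V)^{\le r}=\{Y\in\frak s(V):\rk Y\le r\}$; the borderline range $2r\ge d_V$ is treated analogously using the $\pm$-eigenvalue symmetry of $\frak s$. Second, starting from $X$ with $\rk X\le r$, I apply Lemma~\ref{Lrr} to produce a rank-reduced element $X'=X'_s+X'_{nn}\in\overline{\EuScript O[X]}$ with $(X'_{nn})^2=0$ and $\rk X'=\rk X$. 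On the generalized $0$-eigenspace $V_0$ of $X'_s$, the nilpotent $X'_{nn}$ consists of $2\times 2$ Jordan blocks plus zeros; for $\frak s=\frak{sp}$ each block lies in a symplectic $2$-subspace on which $\frak s$ restricts as $\frak{sp}_2=\frak{sl}_2$, while for $\frak s=\frak{so}$ the blocks pair naturally (since even parts have even multiplicity in $\frak{so}$-Jordan types) and each pair lies in a hyperbolic $4$-subspace where $\frak{so}_4\cong\frak{sl}_2\oplus\frak{sl}_2$ acts. Deforming each nilpotent $\bigl(\begin{smallmatrix}0&1\\0&0\end{smallmatrix}\bigr)$ to the semisimple $\bigl(\begin{smallmatrix}t&0\\0&-t\end{smallmatrix}\bigr)$ and reassembling gives a family $X'(t)\in\frak s(V)$ of semisimple rank-$\rk X$ elements with $X'(t)\to X'$ as $t\to 0$; if $\rk X<r$, I add a further small semisimple perturbation of rank $r-\rk X$ supported on the remaining part of the $0$-eigenspace, raising the rank to exactly $r$.

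The main obstacle is that this construction shows $X'\in\overline{\frak s(V)^r_{reg}}$, whereas Lemma~\ref{Lrr} provides only $X'\in\overline{\EuScript O[X]}$ and not the reverse, so the desired conclusion $X\in\overline{\frak s(V)^r_{reg}}$ does not follow directly. I close this gap by a dimension-and-irreducibility argument: the variety $\{Y\in\frak s(V):\rk Y\le r\}$ is irreducible (parametrized as an $\frak s$-adapted vector bundle over the Grassmannian of $(d_V-r)$-dimensional kernels), and its dimension equals $\dim\overline{\frak s(V)^r_{reg}}$, which is computed from the centralizer $\frak{gl}_1^{r/2}\times\frak s(V_0)$ of a generic semisimple rank-$r$ element together with the $r/2$-dimensional eigenvalue parameter. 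Since $\overline{\frak s(V)^r_{reg}}\subseteq\{Y\in\frak s(V):\rk Y\le r\}$ with both sides irreducible of the same dimension, they coincide, which delivers the desired inclusion.
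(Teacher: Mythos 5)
Your final argument is correct, but it reaches the hard inclusion by a genuinely different route than the paper. You are right that Lemma~\ref{Lrr} only gives $X'\in\overline{\EuScript O[X]}$, which points the wrong way; the paper circumvents precisely this by degenerating from above rather than from below: it splits $V$ into an $X$-stable orthogonal sum $V_s\oplus V_n$, treats $X|_{V_s}$ inside the $\frak{gl}_m$ of matrices $\left(\begin{smallmatrix}F&0\\0&-F^t\end{smallmatrix}\right)$, and for the nilpotent part replaces $X|_{V_n}$ by the \emph{maximal} nilpotent $X_{max}$ of the same rank (one large Jordan block, resp.\ a large block plus a $2\times2$ block for $\frak{sp}$ and even rank), so that $X|_{V_n}\in\overline{\EuScript O[X_{max}]}$ by the closure order on nilpotent orbits~\cite{CM}, and then exhibits $X_{max}$ explicitly as a limit of regular elements. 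Your patch --- irreducibility of $\{Y\in\frak s(V)\mid\rk Y\le r\}$ (a symmetric, resp.\ Pfaffian, determinantal variety, via the kernel bundle over $\Gr(d_V-r; V)$) together with equality of dimensions, both sides being $rd_V-\tfrac{r(r-1)}2$ for $\frak{sp}$ and $rd_V-\tfrac{r(r+1)}2$ for $\frak{so}$ --- is valid and in fact makes your block-by-block deformation of the rank-reduced element logically superfluous; you should, however, actually record these two dimension counts (orbit dimension from the centralizer $\frak{gl}_1^{r/2}\oplus\frak s(V_0)$ plus the $r/2$ eigenvalue parameters versus the determinantal dimension), since the whole proof now rests on them. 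What each approach buys: the paper's explicit degenerations stay within the toolkit of nilpotent orbit closures and also carry the odd-rank case of Lemma~\ref{Lrkspo}, while your argument is shorter, uniform in $\frak{so}$ and $\frak{sp}$, and independent of~\cite{CM}. One caveat you share with the paper: reducing $\frak s(V)^{\le r}$ to the locus $\{\rk X\le r\}$ is immediate (Lemma~\ref{Leig0}) only when $2r<d_V$; your claim that the range $2r\ge d_V$ is ``treated analogously'' is not substantiated (e.g.\ $\mathrm{diag}(1,1,-1,-1)\in\frak{sp}_4$ lies in $\frak{sp}_4^{\le 2}$ by the definition (2) yet has rank $4$), but the paper's proof performs the same reduction silently, so on this point you are no worse off than the original.
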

\begin{proof} By definition, $\frak s(V)^r_{reg}\subset\frak s(V)^{\le r}$. Therefore its suffices to prove that any element $X$ of rank at most $r$ lies in $\overline{\frak s(V)^r_{reg}}$.
For $\frak s=\frak{so}$ the arguments are the same as in the proof
of Lemma~\ref{Lrkspo}. This applies also to the case $\frak
s=\frak{sp}$ if $\rk X$ is odd. Thus in the remainder of the
proof we can assume that $\frak s=\frak{sp}$ and $\rk X$ is even.

Let $X_{max}\in\frak{sp}(V_n)$ be a nilpotent element of rank $r_n$
with two non-zero Jordan block of sizes $r_n\times r_n$ and $2\times
2$. Such an element of $\frak{sp}(V_n)$ exists by~\cite{CM}, and
moreover, $X'\in\overline{\EuScript O[X_{max}]}$.

It suffices to prove that
$X_{max}\in\overline{\frak{sp}(V)^r_{reg}}$. There exists an
$X_{max}$-stable orthogonal decomposition $V_n'\oplus V_n''\oplus
V_n'''=V$ such that \begin{center}$d_{V_n''}=2$, $d_{V_n'}=\rk
X_{max}$, and $X|_{V_n'''}=0$.\end{center} Since
$X|_{V_n'}\in\overline{\frak{sp}(V_n')^{d_{V_n'}-1}_{reg}}$,
$X|_{V_n'\oplus V_n''}$ lies in the closure in $\frak{sp}(V_n'\oplus
V_n'')$ of the set of elements $Y\in\frak{sp}(V_n'\oplus V_n'')^{\le
r}$ such that\begin{center}$\rk Y_{ss}=\rk X_{max}-2$, $\rk
Y_{nn}=2$.\end{center} As $Y_{nn}\in\overline{\frak{sp}(\mathbb
F^4)^2_{reg}}$, we have
$X_{max}\in\overline{\frak{sp}(V)^r_{reg}}$.\end{proof}
\begin{proof}[Proof of Lemma~\ref{Lsl2} for $\frak s=\frak{so}, \frak{sp}$]By
Lemma~\ref{Leig0}, we have $\lambda=0$, and therefore $\rk X=r$. By
Lemma~\ref{Lrr} we can assume that $X$ is rank-reduced. Note that
the standard $\frak{gl}_2$-subalgebra of $\frak s_4\cong\frak{so}_4,
\frak{sp}_4$ given by matrices
\begin{center}$\left(\begin{array}{cc}F&0\\0&-F^t\\\end{array}\right),\hspace{10pt} F\in\frak{gl}_2$\end{center}
in an appropriate basis.

Assume that $\rk X$ is even. Then $X$ is conjugate (via S$(V)$) to a
direct sum
\begin{center}$\bigoplus\limits_{1\le
i\le\mathrm{rk}X_s}A(t_i)\bigoplus\limits_{1\le
j\le\mathrm{rk}X_{nn}}B\bigoplus C$\end{center}where $C$ is the
zero-operator, and $A(t)$, $B$ are of the form
\begin{center}$A(t):=\left(\begin{array}{cccc}t&0&0&0\\0&0&0&0\\0&0&-t&0\\0&0&0&0\\\end{array}\right),\hspace{10pt}B:=\left(\begin{array}{cccc}0&1&0&0\\0&0&0&0\\0&0&0&0\\0&0&-1&0\\\end{array}\right)
$.\end{center}Let $a_1[i], a_2[i], a_3[i], a_4[i]$ and $b_1[j], b_2[j], b_3[j],
b_4[j]$ denote the elements of the basis of the 4-dimensional
subspaces of $V$ corresponding to $A(t_i)$  and $B$ respectively, and let $c[k]$ be the elements of the basis of the subspace
corresponding to $C$. The restriction of the fixed form to
\begin{center}$E:=$span$\{a_1[i], a_3[i], b_1[j], b_3[i],
c[k]\mid$\\$ 1\le i\le\mathrm{rk}X_s, 1\le j\le\mathrm{rk}X_{nn},
1\le k\le d_W-2r$\}\end{center} is nondegenerate and therefore we can assume that $W=E$.
The orthogonal complement $W^\bot$ to $W$ is
the span of the remaining elements in the above basis of $V$. Then
$$\phi(\bigoplus_i\EuScript O[A(t_i)]\bigoplus_j\EuScript
O[B]\bigoplus C)\subset\bigoplus\limits_{1\le
i\le\mathrm{rk}X_s}\frak{gl}_2\bigoplus\limits_{1\le
j\le\mathrm{rk}X_{nn}}\frak{gl}_2\subset\frak s(V),$$ and we claim
that $\phi(\bigoplus_i\EuScript O[A(t_i)]\bigoplus_j\EuScript
O[B]\bigoplus C)$ contains all diagonal matrices of the form
\begin{center}diag$(\lambda_1, -\lambda_1, \lambda_2, -\lambda_2...,
\lambda_{\mathrm{rk}X}, -\lambda_{\mathrm{rk}X},0,...,
0).$\end{center} This follows from the obvious statement that
matrices in $\EuScript O[A(t)] $ for $t\ne 0$ and in $\EuScript
O[B]$ can have arbitrary values in their left top 2$\times$2-corner
for $\frak s=\frak{sp}$ and arbitrary antisymmetric values for
$\frak s=\frak{so}$.

Therefore $\phi(\EuScript O[X])$ contains all
semisimple elements of rank $r=\rk X$. As such elements are dense in
$\frak s(W)^{\le r}$ for $r$ even, $\phi(\EuScript O[X])$ is
dense in $\frak s(W)^{\le r}$.

It remains to consider the case when $\rk X$ is odd. Here $\frak
s=\frak{sp}$. Arguments similar to those above show that
$\phi(\EuScript O[X])$ contains all elements of $\frak{sp}(W)$
which has the form
\begin{center}diag$(\lambda_1, -\lambda_1, \lambda_2, -\lambda_2,...,
\lambda_{\frac{r-1}{2}}, -\lambda_{\frac{r-1}{2}}, 0, ... ,0
)\oplus\left(\begin{array}{cc}0&1\\0&0\\\end{array}\right)$\end{center}in
an appropriate basis. Then $\phi(\EuScript O[X])$ is dense in
$\frak{sp}(W)^{\le r}$ by Lemma~\ref{Lrkspo}.\end{proof}
\section{Proof of Theorems~\ref{Tlinf} and~\ref{Tlinfrk}}\label{Ssch}
In the rest of the paper we assume that $\frak g_\infty$ is a
locally simple Lie algebra which may be finitary (i.e. isomorphic to
$\frak{sl}_\infty, \frak{so}_\infty, \frak{sp}_\infty$), diagonal,
or non-diagonal. Let $\frak g_\infty=\lim\limits_{\to}\frak g_n$ for
a fixed sequence of embeddings (1). In the special case of $\frak
g_\infty\cong\frak{sl}_\infty, \frak{so}_\infty, \frak{sp}_\infty$
we assume in addition that the simple Lie algebras $\frak g_n$
satisfy the assumptions from Section~\ref{Spintro}. Let $G_n$ be the
adjoint group of $\frak g_n$. Recall that, if $J\subset${\bf
S}$^\cdot(\frak g)$ is a Poisson ideal of locally infinite
codimension, the intersections $J_n:=J\cap${\bf S}$^\cdot(\frak
g_n)$ determine proper $G_n$-stable closed subvarieties
$\Var(J_n)\subset\frak g_n^*$ which form an inverse
system\begin{center}...$\to \Var(J_n)\to[p_n]
\Var(J_{n-1})\to...$\end{center}under the natural projections $\frak
g_n^*\to\frak g_{n-1}^*$. Moreover,
$\Var(J_{n-1})=\overline{\pr_n\Var(J_n)}$ where the closure is taken
in $\frak g_{n-1}^*$.

Let $\frak k_1$ be a simple finite-dimensional Lie algebra and
$\frak k_2\subset\frak k_1$ be a simple finite-dimensional
subalgebra. The restriction of the Cartan-Killing of $\frak k_1$ to
$\frak k_2$ is proportional to the Cartan-Killing form of $\frak
k_2$ with coefficient which we denote I$_{\frak k_2}^{\frak k_1}$.
This coefficient, known as the {\it Dynkin index}, is
multiplicative: if $\frak k_3\subset\frak k_2\subset\frak k_1$ is a
chain of inclusions, then
\begin{center}I$_{\frak k_3}^{\frak k_1}=$I$_{\frak k_3}^{\frak
k_2}$I$_{\frak k_2}^{\frak k_1}$.\end{center}The Dynkin index is always a
positive integer~\cite{Dy}. Moreover, if $\frak k_2\subset\frak k_1$ are classical simple Lie algebras of the same type and of rank at least 5, then $\I_{\frak k_2}^{\frak k_1}=1$ if and only if the natural module of $\frak k_1$ decomposes over $\frak k_2$ as a natural plus a trivial module~\cite[Proposition~2.3]{DP}.
\begin{proof}[Proof of Theorem~\ref{Tlinf}]Let $J$ be a non-zero Poisson ideal of locally infinite codimension in {\bf S}$^\cdot(\frak
g_\infty)$. Without loss of generality we may assume that $J$ is a
radical ideal, as the radical of a Poisson of locally infinite
codimension ideal in ${\bf S}^\cdot(\frak g_\infty)$ is again
Poisson and of locally infinite codimension.

Fix $n$ so that $J_n=J\cap${\bf S}$^\cdot(\frak g_n)$ is non-zero and
of infinite codimension in {\bf S}$^\cdot(\frak g_n)$. The image of
any $G_{n+m}$-orbit in $\Var(J_{n+m})$ under the morphism
$\Var(J_{n+m})\to\frak g_n^*$ is not dense in $\frak g_n^*$ as it
lies in the proper closed subvariety $\Var(J_n)\subset\frak g_n^*$.
Therefore Proposition~\ref{Csl} implies that dim$(\frak g_n\cdot
V_{n+m})$ is bounded as a function on $m$. Hence the number of
non-trivial simple $\frak g_n$-constituents in $V_{n+m}$ and their
dimensions are simultaneously bounded. As a consequence, I$_{\frak
g_n}^{\frak g_{m+n}}$ is bounded as a function of $m>0$. Therefore
there exists $N>0$ such that I$_{\frak g_n}^{\frak g_{n+1}}=1$ for
all $n>N$. Now~\cite[Corollary~2.4]{DP} implies $\frak
g_\infty\cong\frak{sl}_\infty, \frak{so}_\infty$ or
$\frak{sp}_\infty$. \end{proof}
\begin{proof}[Proof of Theorem~\ref{Tlinfrk}]If $J$ is of locally finite codimension in {\bf S}$^\cdot(\frak g_\infty)$, then \begin{center}$\Var(J)=0=\frak s_\infty^{\le 0}$.\end{center} Therefore, in the rest of the proof we can assume that $J$ is non-zero and of locally infinite codimension in {\bf S}$^\cdot(\frak g_\infty)$.

Fix $n$ so that $J_n=J\cap${\bf S}$^\cdot(\frak g_n)$ is non-zero
and of infinite codimension in ${\bf S}^\cdot(\frak g_n)$. Since for
any non-zero $X\in \Var(J_{m+n})$ the image in $\frak g_n^*$ of the
$G_{m+n}$-orbit $\EuScript O[X]\subset\frak g_{m+n}^*$ is not dense
in $\frak g_n^*$, Lemma~\ref{Lsl1} implies that $\Var(J_{m+n})\subset\frak
g_{m+n}^{\le r_n'}$ for some $r_n'$ which depends on $n$ only. Let
$r_n$ be the minimal such $r_n'$. By Lemma~\ref{Lsl1}, $r_n<n$.

The inequality $r_n<n$ allows us to apply Lemma~\ref{Lsl2} when $m>
n$. It implies that the image in $\frak g_n^{\le r_n}$ of $\EuScript
O[X]\subset\frak g_{m+n}^*$ is dense in $\frak g_n^{\le r_n}$ for
any $X\in \Var(J_{m+n})$ with $\rk X=r_n$. Furthermore, by
definition, $r_n\le r_{n+m}$. Lemma~\ref{Lrkmaxsp} implies that
$r_{m+n}\ge r_n$, and therefore $r_n=r_{m+n}$. Set $r=r_n$. Then
$J=~J^{\le r}$.\end{proof}
\section{Some corollaries}\label{Sil}
Corollary~\ref{Ctlinf} implies that if $\frak g_\infty$ is a locally simple Lie algebra which is not
finitary, then any ideal in $\U(\frak g_\infty)$ is of locally finite
codimension. Furthermore, a result of A. Zhilinskii~\cite{Zh1}
claims that $\U(\frak g_\infty)$ admits an ideal of locally finite
codimension which is not the augmentation ideal if and only if $\frak
g_\infty$ is diagonal. These two statements yield the following corollary.
\begin{corollary}[Baranov's Conjecture] Let $\frak g_\infty$ be any locally simple Lie algebra. Then the augmentation ideal is the only non-zero ideal in $\U(\frak g_\infty)$ if and only if $\frak g_\infty$ is not diagonal.\end{corollary}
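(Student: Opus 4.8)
The plan is to combine the two ingredients that have just been assembled. The first is Corollary~\ref{Ctlinf}: if $\U(\frak g_\infty)$ admits a non-zero ideal of locally infinite codimension, then $\frak g_\infty$ is one of the three finitary Lie algebras $\frak{sl}_\infty$, $\frak{so}_\infty$, $\frak{sp}_\infty$ — none of which, let us note, is diagonal in a way relevant here except that it does lie in the diagonal class, so more care is needed (see the obstacle below). The second is the quoted result of A.~Zhilinskii~\cite{Zh1}: $\U(\frak g_\infty)$ admits an ideal of locally finite codimension different from the augmentation ideal if and only if $\frak g_\infty$ is diagonal. The strategy is thus to split any non-zero proper ideal $I\subset\U(\frak g_\infty)$ into the two cases ``locally finite codimension'' and ``locally infinite codimension'' and dispatch each case with one of these two results.

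First I would prove the ``if'' direction: suppose $\frak g_\infty$ is not diagonal, and let $I\subset\U(\frak g_\infty)$ be any non-zero ideal; the goal is to show $I$ is the augmentation ideal. Since $\frak g_\infty$ is not diagonal, it is in particular not isomorphic to $\frak{sl}_\infty$, $\frak{so}_\infty$ or $\frak{sp}_\infty$ (these three are diagonal), so by the contrapositive of Corollary~\ref{Ctlinf} the ideal $I$ must be of locally finite codimension. Then Zhilinskii's result, again via its contrapositive, forces $I$ to be the augmentation ideal. Conversely, for the ``only if'' direction, suppose $\frak g_\infty$ is diagonal; then directly by Zhilinskii's result there is an ideal of locally finite codimension which is not the augmentation ideal, so the augmentation ideal is not the unique non-zero ideal. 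This establishes the equivalence.

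The one genuine subtlety — and the step I expect to require the most attention — is the logical bookkeeping around the three finitary algebras. The statement of the corollary is about \emph{all} locally simple Lie algebras, including $\frak{sl}_\infty$, $\frak{so}_\infty$, $\frak{sp}_\infty$, which \emph{are} diagonal; for these the right-hand side of the equivalence (``$\frak g_\infty$ is not diagonal'') is false, so one must check that the left-hand side (``the augmentation ideal is the only non-zero ideal'') is also false, i.e. that $\U(\frak{sl}_\infty)$ etc. really do admit other ideals. This is exactly what the ``only if'' direction via Zhilinskii provides, since these algebras are diagonal; so the argument is self-consistent, but it is worth stating explicitly that no separate treatment of the finitary case is needed beyond invoking Zhilinskii. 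Everything else is a formal two-line deduction from the two cited results, so the proof is short; the care lies entirely in quoting the hypotheses of Corollary~\ref{Ctlinf} and of~\cite{Zh1} in the correct direction.
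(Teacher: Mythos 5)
Your argument is correct and is essentially the paper's own: the paper likewise combines Corollary~\ref{Ctlinf} (non-finitary, hence in particular non-diagonal, forces every ideal to be of locally finite codimension) with Zhilinskii's criterion that a locally finite codimension ideal other than the augmentation ideal exists precisely when $\frak g_\infty$ is diagonal. Your extra remark that the finitary algebras $\frak{sl}_\infty,\frak{so}_\infty,\frak{sp}_\infty$ are diagonal, so the ``only if'' direction covers them via Zhilinskii with no separate treatment, is exactly the bookkeeping the paper leaves implicit.
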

Furthermore, Theorems~\ref{Tlinf},~\ref{Tlinfrk} imply the
following.
\begin{corollary}Let $\frak g_\infty$ be any locally simple Lie
algebra and $I\subset\U(\frak g_\infty)$ be an ideal. Then\\
a) $\Var(I)\ne 0$ implies that $\frak g_\infty\cong\frak{sl}_\infty,
\frak{so}_\infty, \frak{sp}_\infty$;\\
b) if $\frak g_\infty=\frak{sl}_\infty, \frak{so}_\infty,
\frak{sp}_\infty$ and $I\subset\U(\frak g_\infty)$ is an ideal of
locally infinite codimension, then $\Var(I)=\frak s_\infty^{\le r}$
for some $r\in\mathbb Z_{\ge 1}$.\end{corollary}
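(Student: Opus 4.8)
The plan is to deduce both statements directly from Theorems~\ref{Tlinf} and~\ref{Tlinfrk} by means of the dictionary established in Section~\ref{Spre} relating an ideal $I\subset\U(\frak g_\infty)$, the $\frak g_\infty$-stable (hence Poisson) ideal $\gr I\subset{\bf S}^\cdot(\frak g_\infty)$, its radical $\rad(\gr I)$, and the associated ``variety'' $\Var(I)$. First I would record that we may assume $I\neq0$; the zero ideal, for which $\Var(I)=\frak g_\infty^*$, is implicitly excluded. Note that $I\neq0$ forces $\gr I\neq0$, since the top-degree component of any non-zero element of $I$ is a non-zero element of $\gr I$.

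For part a), assume $\Var(I)\neq0$. By the equivalence recorded in Section~\ref{Spre} ($\gr I$ is of locally finite codimension in ${\bf S}^\cdot(\frak g_\infty)$ if and only if $\Var(I)=0$), the ideal $\gr I$ is of locally infinite codimension. Hence $\gr I$ is a non-zero Poisson ideal of ${\bf S}^\cdot(\frak g_\infty)$ of locally infinite codimension, and Theorem~\ref{Tlinf} yields $\frak g_\infty\cong\frak{sl}_\infty,\frak{so}_\infty,\frak{sp}_\infty$.

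For part b), let $\frak g_\infty=\frak{sl}_\infty,\frak{so}_\infty,\frak{sp}_\infty$ and let $I$ be of locally infinite codimension; then $\gr I$ is of locally infinite codimension, equivalently $\Var(I)\neq0$. Put $J:=\rad(\gr I)$. It is a radical, $\frak g_\infty$-stable (hence Poisson) ideal, it is non-zero since $J\supseteq\gr I\neq0$, and it has the same zero-set as $\gr I$, namely $\Var(I)\neq0$; therefore $J$ is again of locally infinite codimension. Theorem~\ref{Tlinfrk} now gives $J=J^{\le r}$ for some $r\in\mathbb Z_{\ge0}$, and by the construction of $J^{\le r}$ we obtain $\Var(I)=\Var(J)=\Var(J^{\le r})=\varprojlim\frak g_n^{\le r}=\frak s_\infty^{\le r}$. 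Finally, $r=0$ is impossible: for each $n$ one has $\frak g_n^{\le 0}=\{X\in\frak g_n\mid X=\lambda\,\Id_{V_n}\text{ for some }\lambda\in\mathbb F\}$, and a non-zero scalar operator lies in none of $\frak{sl}(V_n),\frak{so}(V_n),\frak{sp}(V_n)$ (it is not traceless, and not skew-symmetric with respect to the fixed nondegenerate form), so $\frak g_n^{\le0}=\{0\}$ and $\frak s_\infty^{\le0}=0$, contradicting $\Var(I)\neq0$. Hence $r\ge1$.

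The proof is purely formal: all the content sits in Theorems~\ref{Tlinf} and~\ref{Tlinfrk} (and the results feeding them, notably Proposition~\ref{Csl} and Lemmas~\ref{Lsl1} and~\ref{Lsl2}). The two points worth a moment's attention are that passing from $\gr I$ to its radical preserves ``locally infinite codimension'' --- which is why the argument is phrased via the common zero-set rather than by directly comparing codimensions, where the implication can go the wrong way --- and the elementary check that $\frak g_n^{\le0}=\{0\}$, which rules out the degenerate value $r=0$. I foresee no genuine obstacle.
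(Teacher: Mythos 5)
Your proof is correct and is exactly the deduction the paper intends when it says ``Theorems~\ref{Tlinf},~\ref{Tlinfrk} imply the following'' without spelling out the details. You correctly invoke the Section~\ref{Spre} dictionary (non-zero ideal $\Rightarrow$ non-zero $\gr I$; $\Var(I)=0\Leftrightarrow\gr I$ of locally finite codimension), pass to $J=\rad(\gr I)$ by comparing zero-sets rather than codimensions (which is also the reasoning implicitly used at the start of the paper's proof of Theorem~\ref{Tlinf}), and rule out $r=0$ by the elementary computation $\frak g_n^{\le 0}=\{0\}$. The only point worth flagging is that the corollary as stated must implicitly exclude $I=0$ (where $\Var(I)=\frak g_\infty^*\ne0$ for every $\frak g_\infty$), which you correctly noticed.
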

A.~Zhilinskii~\cite{Zh3} (see also~\cite{Zh2}) has given a
description of all ideals of locally finite codimension in
 $\U(\frak g_\infty)$ for an arbitrary locally simple Lie algebra $\frak g_\infty$.
Therefore the problem of describing all ideals in $\U(\frak
g_\infty)$ gets reduced to the problem of describing all ideals of
locally infinite codimension in $\U(\frak g_\infty)$ for $\frak
g_\infty=\frak{sl}_\infty, \frak{so}_\infty, \frak{sp}_\infty$. In
the following two sections we will show in particular that all
proj-varieties $\frak s^{\le r}$ arise as associated ``varieties''
of ideals respectively of $\U(\frak{sl}_\infty),
\U(\frak{so}_\infty), \U(\frak{sp}_\infty)$.

\section{Coherent local systems of modules and a classification of prime integrable ideals of $\U(\frak g_\infty)$}\label{Sls}
In this section we review some published and unpublished results of
A. Zhilinskii and draw corollaries.
\begin{definition}An ideal $I\subset\U(\frak g_\infty)$ is {\it integrable} if for
any finitely generated subalgebra $U'\subset\U(\frak g_\infty)$, the
ideal $I\cap U'$ in $U'$ is an intersection of ideals of finite
codimension of $U'$.\end{definition} If a $\frak g_\infty$-module
$M$ is {\it integrable}, i.e. $\dim\U(\frak g')m<\infty$ for any
$m\in M$ and any finite-dimensional subalgebra $\frak g'\subset\frak
g_\infty$, the annihilator of $M$ in $\U(\frak g_\infty)$ is an
integrable ideal. Note that an equivalent definition of an
integrable $\U(\frak g_\infty)$ module is a left $\U(\frak
g_\infty)$-module $M$ for which $\dim(U'm)<\infty$ for any $m\in M$
and any finitely generated subalgebra $U'\subset \U(\frak
g_\infty)$. Integrable ideals in $\U(\frak g_\infty)$ are described
as annihilators of coherent local systems of finite-dimensional
$\frak g_n$-modules as introduced by A.~Zhilinskii in~\cite{Zh1}. We
discuss this topic below.

\subsection{Integrable ideals and coherent local systems}\label{SScls}
\begin{definition}{\it A coherent local system of modules} (further shortened as {\it c.l.s.}) for $\frak g_\infty=\varinjlim\frak g_n$ is a collection of sets \begin{center}$\{Q_n\}_{n\in\mathbb Z_{\ge1}}\subset\prod_{n\in\mathbb Z_{\ge1}}\Irr\frak g_n$\end{center} such that $Q_m=\langle Q_n\rangle_m$ for any $n>m$.\end{definition}
If $Q$ is a c.l.s., then $\cap_{z\in Q_m}\Ann_{\U(\frak
g_m)}z\subset\cap_{z\in Q_n}\Ann_{\U(\frak g_n)}z$ for any $n>m$.
Therefore $\cup_m(\cap_{z\in Q_m}\Ann_{\U(\frak g_m)}z$) is an ideal
of $\U(\frak g)$; we denote it by $I(Q)$. Note that $I(Q)$ is
integrable.

A.~Zhilinskii~\cite{Zh2},~\cite{Zh1},~\cite{Zh3} has classified
c.l.s. for any locally simple Lie algebra $\frak g_\infty$. Below we
show how this classification leads to a description of integrable
ideals of $\U(\frak g_\infty)$ for $\frak g_\infty=\frak{sl}_\infty,
\frak{so}_\infty, \frak{sp}_\infty$.

A c.l.s. $Q$ is {\it irreducible} if $Q\ne Q'\cup Q''$ with
$Q'\notin Q''$ and $Q''\notin Q'$. Let $Q$ be any c.l.s. and let
$\mathcal S(Q)$ be the set of irreducible c.l.s. which are contained
in $Q$. Then $\mathcal S(Q)$ has a finite subset of elements
$Q(1),..., Q(r)$ which are maximal by inclusion, and
$Q=\cup_rQ(r)$~\cite{Zh2}; we call $Q(r)$ the {\it irreducible
components} of $Q$. This makes apparent the analogy between c.l.s.
and algebraic varieties.

Any integrable $\frak g_\infty$-module $M$ determines a c.l.s.
$Q:=\{Q_n\}_{n\in\mathbb Z_{\ge0}}$, where
\begin{center}$Q_n:=\{z\in\Irr \frak g_n\mid\Hom_{\frak g_n}(z, M)\ne 0$\}.\end{center} We denote this relation by $Q\leftarrow M$. We also recall that an integrable $\frak g_\infty$-module $M$ is {\it
locally simple} if $M=\varinjlim M_n$ for a suitable chain
$...\subset M_n\subset M_{n+1}\subset...$ of simple
finite-dimensional $\frak g_n$-submodules $M_n$ of $M$.

\begin{proposition}[{~\cite[Lemma 1.1.2]{Zh2}}]\label{Pintm}If $Q$ is an irreducible c.l.s., then $I(Q)$ is the annihilator of some locally simple integrable $\frak g_\infty$-module. In particular, $I(Q$) is primitive and hence prime.\end{proposition}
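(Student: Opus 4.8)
The plan is to deduce everything from the construction of a single locally simple integrable module, which in turn is built from a ``thread'' inside $Q$.

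\emph{Reduction.} First I would show it suffices to produce a locally simple integrable $\frak g_\infty$-module $M$ whose coherent local system is $Q$, i.e.\ with $Q\leftarrow M$. For such an $M$ the restriction $M|_{\frak g_n}$ is a (possibly infinite) semisimple $\frak g_n$-module in which precisely the simple modules $z\in Q_n$ occur, each with positive multiplicity, so $\Ann_{\U(\frak g_n)}(M|_{\frak g_n})=\bigcap_{z\in Q_n}\Ann_{\U(\frak g_n)}z$; since $\Ann_{\U(\frak g_\infty)}M\cap\U(\frak g_n)=\Ann_{\U(\frak g_n)}(M|_{\frak g_n})$, taking the union over $n$ gives $\Ann_{\U(\frak g_\infty)}M=\bigcup_n\bigcap_{z\in Q_n}\Ann_{\U(\frak g_n)}z=I(Q)$. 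Moreover any locally simple module $M=\varinjlim M_n$ is simple over $\frak g_\infty$, since a nonzero submodule contains some $M_n$, hence all $M_m$ with $m\ge n$, hence $M$; thus $I(Q)=\Ann_{\U(\frak g_\infty)}M$ is primitive, and a primitive ideal is prime (if $ab\in\Ann M$ with $b\notin\Ann M$, then $bM=M$, so $aM=abM=0$).

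\emph{Modules from threads.} Next I would observe that, up to an irrelevant choice of connecting maps, a locally simple integrable module is the same datum as a \emph{thread}: a sequence $\tau=(z_n)_{n\ge1}$ with $z_n\in\Irr\frak g_n$ and $z_n$ a $\frak g_n$-constituent of $z_{n+1}|_{\frak g_n}$. Given $\tau$, put $M_\tau:=\varinjlim z_n$ for fixed embeddings $z_n\hookrightarrow z_{n+1}$; then $M_\tau$ is locally simple and integrable, and its coherent local system is $\langle\tau\rangle$ with $\langle\tau\rangle_m:=\bigcup_{k\ge m}\langle z_k\rangle_m$ (the sets $\langle z_k\rangle_m$ increase with $k$, and $\langle\tau\rangle$ satisfies the coherence condition). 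If every $z_n$ lies in $Q_n$, then $\langle\tau\rangle\subseteq Q$; conversely, since $Q_m=\langle Q_{m+1}\rangle_m$ for all $m$, each $z\in Q_n$ lies on a thread contained in $Q$, so $Q=\bigcup_\tau\langle\tau\rangle$. Finally each $\langle\tau\rangle$ is \emph{irreducible}: in a decomposition $\langle\tau\rangle=Q'\cup Q''$ at least one of $\{m:z_m\in Q'_m\}$, $\{m:z_m\in Q''_m\}$ is infinite, and cofinality of that set forces $\langle\tau\rangle\subseteq Q'$ or $\langle\tau\rangle\subseteq Q''$.

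\emph{The crux, and the main obstacle.} It remains to show that an \emph{irreducible} $Q$ equals $\langle\tau\rangle$ for a single thread; this is the delicate point. My approach: for $z\in Q_n$ let $R[z]$ be the collection of all modules occurring in the restriction to some $\frak g_m$ of an element of $Q$ that lies ``above'' $z$ (reachable by repeatedly passing from a module to one of which it is a restriction-constituent). One checks that $R[z]$ is a sub-c.l.s.\ of $Q$, that $R[z]=\bigcup_{\tau\ni z}\langle\tau\rangle$, that $Q=\bigcup_{z\in Q_n}R[z]$ for each fixed $n$, and that $z'\in R[z]$ implies $R[z']\subseteq R[z]$. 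Using Zhilinskii's finiteness of the set of irreducible components of a c.l.s.\ one reduces to finitely many maximal $R[z]$, and irreducibility of $Q$ then forces a single maximal one: $R[z^\ast]=Q$ for some $z^\ast$. Fixing such a $z^\ast$ and enumerating all pairs $(m,y)$ with $y\in Q_m$ as $(m_1,y_1),(m_2,y_2),\dots$, I would build a thread $\tau$ through $z^\ast$ by a diagonal argument: at stage $j$, prolong the current finite initial segment far enough upward that $y_j$ occurs as a $\frak g_{m_j}$-constituent of the new top module---possible precisely because $R[z^\ast]=Q$. Then $\langle\tau\rangle_m\supseteq Q_m$ for all $m$, so $\langle\tau\rangle=Q$ and $M:=M_\tau$ is the required module. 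The genuinely difficult ingredient is the control of the reachability classes $R[z]$ and the extraction of a single maximal one from irreducibility; this is the heart of the cited lemma of Zhilinskii, and it can alternatively be read off from his explicit classification of irreducible coherent local systems (reviewed in Section~\ref{Sls}) by exhibiting the locally simple module case by case.
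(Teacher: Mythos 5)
The paper does not actually prove this statement: it is imported verbatim from Zhilinskii (\cite[Lemma 1.1.2]{Zh2}), so there is no internal argument to compare against. Your reduction steps are fine and are the standard way to read the lemma: if $M=\varinjlim M_n$ is locally simple and integrable with c.l.s. exactly $Q$, then $\Ann_{\U(\frak g_n)}(M|_{\frak g_n})=\bigcap_{z\in Q_n}\Ann_{\U(\frak g_n)}z$, whence $\Ann_{\U(\frak g_\infty)}M=I(Q)$; a locally simple module is simple (a nonzero submodule contains some $M_n$, hence all later $M_m$), so $I(Q)$ is primitive, and primitive implies prime (argue with two-sided ideals $A,B$: $ABM=0$ and $BM=M$ force $AM=0$; your element-wise version with ``$bM=M$'' is not literally correct since $bM$ need not be a submodule). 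The thread formalism and the verification that $\langle\tau\rangle$ is a coherent, irreducible local system are also correct.

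The genuine gap is exactly where you flag it: the claim that an irreducible c.l.s. equals $\langle\tau\rangle$ for a single thread. Your sketch via reachability classes does not close it. Concretely, the asserted monotonicity ``$z'\in R[z]$ implies $R[z']\subseteq R[z]$'' is false for c.l.s. in general: take $\frak g_\infty=\frak{sl}_\infty$ and $Q=\mathcal L_1\cup\mathcal R_1$, so $Q_n=\{[\mathrm{triv}],[V_n],[V_n^*]\}$; for $z=[V_n]$ the only elements of $Q$ above $z$ are the $[V_N]$, so $R[z]=\mathcal L_1$, yet the trivial class $z'$ lies in $R[z]$ while $R[z']=Q\not\subseteq\mathcal L_1$. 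Of course this $Q$ is reducible, but nothing in your outline uses irreducibility to establish the monotonicity, and for irreducible $Q$ that property (equivalently, $R[w]=Q$ for every $w\in Q$) is essentially the assertion to be proved. Moreover, your diagonal construction needs $y_j\in R[w_k]$ for the current top module $w_k$ of the partial thread, not merely $R[z^\ast]=Q$, so it silently relies on this same monotonicity; and the passage from the possibly infinite union $Q=\bigcup_{z\in Q_n}R[z]$ (note $Q_n$ may be infinite, e.g. for $\mathcal L_1^\infty$) to ``finitely many maximal $R[z]$'' is not justified, since the $R[z]$ need not be irreducible and Zhilinskii's finiteness of irreducible components does not bound them. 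As you yourself acknowledge, this is the heart of the cited lemma, so the proposal as written re-derives the easy half and defers the hard half to Zhilinskii — which is in effect what the paper does by citation, but it does not constitute an independent proof.
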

Fix $n$. The set Irr$\frak g_n$ is parametrized by the lattice
$\Lambda_i$ of integral dominant weights of $\frak g_n$. Let $z_1,
z_2$ be isomorphism classes of simple $\frak g_n$-modules with
respective highest weights $\lambda_1, \lambda_2$. We denote by
$z_1z_2$ the isomorphism class of a simple module with highest
weight $\lambda_1+\lambda_2$. If $S_1, S_2\subset$Irr$\frak g_n$ we
set\begin{center}$S_1S_2:=\{z\in$Irr$\frak g_n\mid z=z_1z_2$ for
some $z_1\in S_1$ and $z_2\in S_2\}$.\end{center}If $Q'$, $Q''$ are
c.l.s., we denote by $Q'Q''$ the smallest c.l.s. such that
$(Q')_i(Q'')_i\subset(Q'Q'')_i$. By definition, $Q'Q''$ is the {\it
product} of $Q'$ and $Q''$. If $\frak g_\infty=\frak{sl}_\infty,
\frak{sp}_\infty$, then
by~\cite{Zh2}\begin{center}$(Q')_n(Q'')_n=(Q'Q'')_n$.\end{center}
\subsection{Zhilinskii's classification of c.l.s.} In this subsection we reproduce A.~Zhilinskii's classification of irreducible c.l.s. for $\frak g_\infty\cong\frak{sl}_\infty, \frak{so}_\infty, \frak{sp}_\infty$.

In the rest of the paper $\frak g_\infty=\frak{sl}_\infty, \frak{so}_\infty$ or $\frak{sp}_\infty$ and $\frak g_n=\frak s(V_n)$ is a sequence (1) of finite-dimensional simple Lie algebras satisfying the assumptions of Section~\ref{Spintro}. We set $V_\infty:=\varinjlim V_n$ and $(V_\infty)_*:=\varinjlim V_n^*$.

The following irreducible c.l.s. are by definition the {\it basic
c.l.s.}:
\begin{center}for $\frak g_\infty=\frak{sl}_\infty: \mathcal E\leftarrow\Lambda^\cdot V_\infty,\hspace{10pt}\mathcal L_p\leftarrow\Lambda^p V_\infty,\hspace{10pt} \mathcal
L_p^\infty\leftarrow${\bf S}$^\cdot(V_\infty\otimes\mathbb
F^p),$\\$\mathcal
R_q\leftarrow\Lambda^q(V_\infty)_*,\hspace{10pt} \mathcal R_q^\infty \leftarrow${\bf
S}$^\cdot((V_\infty)_*\otimes\mathbb F^q),
\hspace{10pt}\mathcal E^\infty$ (all modules);\\for $\frak
g_\infty=\frak{sp}_\infty: \mathcal E\leftarrow\Lambda^\cdot
V_\infty,\hspace{10pt}\mathcal L_p\leftarrow\Lambda^p
V_\infty,\hspace{10pt} \mathcal L_p^\infty\leftarrow${\bf S}$^\cdot(V_\infty\otimes\mathbb
F^p)$,\hspace{10pt}\\$\mathcal E^\infty$ (all modules);\\for $\frak g_\infty=\frak{so}_\infty: \mathcal E\leftarrow\Lambda^\cdot V_\infty,\hspace{10pt}\mathcal L_p\leftarrow\Lambda^p V_\infty,\hspace{10pt}
\mathcal L_p^\infty\leftarrow${\bf S}$^\cdot(V_\infty\otimes\mathbb F^p)$,\\
$\mathcal R$ (spinor modules),\hspace{10pt}$\mathcal E^\infty$ (all
modules),\end{center} where $p, q\in\mathbb Z_{\ge1}$.

\begin{proposition}[Unique factorization property~\cite{Zh2}]Any irreducible c.l.s. can be expressed uniquely as a product as follows:
$$\begin{tabular}{ccc}$(\mathcal L_v^\infty\mathcal
L_{v+1}^{x_{v+1}}\mathcal L_{v+2}^{x_{v+2}}...\mathcal
L_n^{x_n})~~\mathcal E^m~~(\mathcal R_w^\infty\mathcal
R_{w+1}^{z_{w+1}}\mathcal R_{w+2}^{z_{w+2}}...\mathcal R_n^{z_n})$&
for $\frak g_\infty=\frak{sl}_\infty$,&(4)\\$(\mathcal
L_v^\infty\mathcal L_{v+1}^{x_{v+1}}\mathcal
L_{v+2}^{x_{v+2}}...\mathcal L_n^{x_n})~~\mathcal E^m$ or $(\mathcal
L_v^\infty\mathcal L_{v+1}^{x_{v+1}}\mathcal
L_{v+2}^{x_{v+2}}...\mathcal L_n^{x_n})~~\mathcal E^m~~\mathcal R$&
for $\frak g_\infty=\frak{so}_\infty$,&(5)\\$(\mathcal
L_v^\infty\mathcal L_{v+1}^{x_{v+1}}\mathcal
L_{v+2}^{x_{v+2}}...\mathcal L_n^{x_n})~~\mathcal E^m$& for $\frak
g_\infty=\frak{sp}_\infty$,&(6)\end{tabular}$$where
\begin{center}$m,n, v, w\in\mathbb Z_{\ge0}$, $v, w\le n$,\\$x_i, z_j\in\mathbb Z_{\ge0}$ for $v+1\le
i\le n$ and $w+1\le j\le n$.\end{center}Here, for $v=0$, $\mathcal
L_v^\infty$ is assumed to be the identity (the c.l.s. consisting of
the isomorphism class of the trivial 1-dimensional module at all
levels), and for $w=0$, $\mathcal R_w^\infty$ is assumed to be the
identity.
\end{proposition}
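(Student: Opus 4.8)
The plan is to reduce the proposition to a purely combinatorial classification of irreducible c.l.s. and then to an elementary factorization statement in the monoid of c.l.s.\ under the product operation. First I would fix the standard parametrization of $\Irr\mathfrak g_n$ by Young-diagram data: for $\mathfrak g_\infty=\mathfrak{sp}_\infty$ a partition with at most $n$ rows; for $\mathfrak g_\infty=\mathfrak{so}_\infty$ such a partition together with a label recording whether the module is of tensor or of spinor type; for $\mathfrak g_\infty=\mathfrak{sl}_\infty$ a pair of partitions $(\lambda^+,\lambda^-)$ encoding the dominant and antidominant parts of the highest weight. Under the fixed inclusion $\mathfrak g_n\hookrightarrow\mathfrak g_{n+1}$, for which $V_{n+1}\cong V_n\oplus(\text{trivial})$ as $\mathfrak g_n$-modules, the branching rule restricting a simple $\mathfrak g_{n+1}$-module to $\mathfrak g_n$ is governed by deletion of a horizontal strip from the diagram (together, in the $\mathfrak{sl}$ case, with the mirror operation on $\lambda^-$, and, in the $\mathfrak{so}$ case, the compatible rule for the spin label). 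Consequently $\langle Q_{n+1}\rangle_n$ is exactly the closure of $Q_{n+1}$ inside $\Irr\mathfrak g_n$ under this deletion operation, so that a c.l.s.\ is precisely a compatible inverse system $(Q_n)$ of subsets of $\Irr\mathfrak g_n$ each closed under horizontal-strip deletion.

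The main work, and the step I expect to be the hardest, is the classification of the irreducible such systems. For an irreducible $Q$ one observes that a diagram occurring in $Q_n$ can grow, as $n$ increases, in two essentially independent ways: its number of rows can be unbounded, or the lengths of its first several rows can be unbounded (and for $\mathfrak{sl}$ the same dichotomy applies separately to the $\lambda^-$-part). One then shows that the closure condition forces $\bigcup_nQ_n$, read as a set of diagrams, to consist of all subdiagrams of a single ``profinite limiting shape'', and that an indecomposable limiting shape must be one of the finitely many types entering (4)--(6): a band of $v$ rows of unbounded length (giving $\mathcal L_v^\infty$), a region consisting of $x_i$ rows of bounded length $i$ for finitely many $i>v$ (giving $\prod_i\mathcal L_i^{x_i}$), a staircase region of boundedly many columns growing with $n$ (giving $\mathcal E^m$), the mirror of these on the $\lambda^-$-side for $\mathfrak{sl}$ (giving $\mathcal R_w^\infty$ and $\prod_j\mathcal R_j^{z_j}$), and the single spin label for $\mathfrak{so}$ (giving $\mathcal R$); the one remaining irreducible system, ``all diagrams'' ($\mathcal E^\infty$), is dispatched separately. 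Carrying the three root systems through this analysis simultaneously is delicate; one must also handle the fact that for $\mathfrak{so}_\infty$ the identity $(Q')_n(Q'')_n=(Q'Q'')_n$ fails — which is why the spin factor must be split off before the rest — and control the non-stabilized behaviour for small $n$.

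Granting the classification, each irreducible c.l.s.\ is encoded by a finite tuple $(v;x_{v+1},\dots,x_n;m;w;z_{w+1},\dots,z_n)$ of non-negative integers (with a parity bit in the $\mathfrak{so}$ case), and it remains to check that the product of c.l.s.\ acts on these data by coordinatewise addition of the shape information. The inclusion $(Q')_n(Q'')_n\subseteq(Q'Q'')_n$ is immediate from the definition of the product; equality of the resulting irreducible components follows by matching the growth rate of the two sides, e.g.\ the degree of the polynomial counting $|Q_n|$ restricted to a fixed box. Given the resulting monoid structure — the $\mathcal L_p^\infty$ (and $\mathcal R_q^\infty$, $\mathcal E^\infty$) are idempotent, $\mathcal L_p^\infty\mathcal L_q=\mathcal L_p^\infty$ for $q\le p$, $\mathcal E^\infty$ is absorbing, $\mathcal R^2=\mathcal E$ for $\mathfrak{so}$, and the remaining generators $\mathcal L_p$ (resp.\ $\mathcal R_q$, $\mathcal E$) are free — the normal forms (4)--(6) are canonical coset representatives, and uniqueness of the factorization is then immediate. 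This recovers the classification of~\cite{Zh2}.
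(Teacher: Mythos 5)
The paper does not prove this proposition; it is imported wholesale from Zhilinskii (note the citation in the proposition header), and no proof is given in the text. So there is no ``paper's proof'' to compare your sketch against --- you are attempting to reconstruct Zhilinskii's original argument.

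As an attempted reconstruction, your outline has the right general shape (parametrize $\Irr\frak g_n$ by partitions, interpret a c.l.s.\ as a compatible system of sets closed under the branching rule, then classify irreducible such systems and read off the monoid structure), but there are concrete gaps. First, your description of the branching rule is wrong for types $\frak{so}$ and $\frak{sp}$: under $V_{n+1}\cong V_n\oplus(\text{trivial})$ the one-step restriction is governed by \emph{double} interlacing (the Littlewood--Brauer branching rules, passing through the intermediate odd/``$\frak{gl}$'' step), not by removal of a single horizontal strip; your stated rule only holds for $\frak{sl}$. This may wash out after taking the closure over all $n$, but as written the key combinatorial premise is incorrect in two of the three cases. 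Second, the classification of the ``indecomposable limiting shapes'' --- which is the actual content of the proposition --- is asserted pictorially (``one observes \dots one then shows'') with no argument. In particular, you give no reason why an irreducible c.l.s.\ cannot combine unboundedly many rows with boundedly many columns in some intermediate ``triangular'' way, nor why $v$ must be finite whenever $m$ is finite, and you do not verify the asserted monoid relations ($\mathcal L_p^\infty\mathcal L_q=\mathcal L_p^\infty$ for $q\le p$, $\mathcal R^2=\mathcal E$ for $\frak{so}$, freeness of the remaining generators). Third, the ``match growth rates of $|Q_n|$'' device is a heuristic, not a proof, and as the paper itself notes the level-wise identity $(Q')_n(Q'')_n=(Q'Q'')_n$ fails for $\frak{so}_\infty$, which you flag but never actually handle. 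Until the classification step is carried out in detail (which is essentially the whole of Zhilinskii's theorem), the proposal is an outline, not a proof.
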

We say that an irreducible c.l.s. of $\frak{so}_\infty$ is of {\it
integer type} if its expression does not contain $\mathcal R$, and
of {\it semiinteger type} otherwise. Furthermore, we define a c.l.s.
$Q$ to be {\it of finite type} if the set $Q_n$ is finite for all
$n\ge1$. It is easy to see that the ideal $I(Q)$ is of locally
finite-codimension in $\U(\frak g_\infty)$ if and only if $Q$ is of
finite type.

If $\frak g_\infty=\frak{sl}_\infty, \frak{sp}_\infty$, the
irreducible c.l.s. of finite type form a free lattice (by means of
product) generated by $\mathcal E, \mathcal L_p, \mathcal R_q$ for
$\frak g_\infty=\frak{sl}_\infty$ and by $\mathcal E, \mathcal L_p$
for $\frak g_\infty=\frak{sp}_\infty$. For $\frak
g=\frak{so}_\infty$ the set of irreducible c.l.s. of finite type
equals the union $\EuScript N\sqcup\EuScript N\mathcal R$, where
$\EuScript N$ is a free lattice generated by $\mathcal E, \mathcal
L_p$, and $\EuScript N\mathcal R:=\{N\mathcal R\mid N\in\EuScript
N\}$~\cite{Zh2}.

\subsection{Partial order by inclusion on c.l.s.}\label{SSOrd}To a c.l.s. $Q$ for $\frak{sl}_\infty$ in the form (4) A.~Zhilinskii assigns the following two non-increasing sequences of elements of
$\mathbb
Z_{\ge0}\cup\{+\infty\}$\begin{center}$\{l_i:=m+\Sigma_{j\ge
i}x_j\}_i$ and $\{r_i=m+\Sigma_{j\ge i}z_j\}_i$, \end{center}where
it is assumed $l_1=l_2=...=l_v:=+\infty=:r_1=...=r_w$. Note that
$$\lim\limits_{i\to\infty} l_i=\lim\limits_{i\to\infty}r_i=m.$$
Similarly, to a c.l.s. for $\frak{sp}_\infty$ or $\frak{so}_\infty$
in the form (5) or (6) A.~Zhilinskii assigns the non-increasing
sequence\begin{center}$\{l_i:=m+\Sigma_{j\ge
i}x_i\}_i$,\end{center}where $l_1=...=l_v:=+\infty$. Again
$\lim\limits_{i\to\infty}l_i=m$.

Zhilinskii establishes the following inclusion criterion~\cite{Zh2}.
A c.l.s. $Q$ for $\frak{sl}_\infty$ contains a c.l.s. $Q'$ if and
only if, for some $a, b\in\mathbb Z_{\ge0}$, we have
\begin{center}$a+b=m-m'$, $l_i\ge l_i'+a$, $r_i\ge r_i'+b$\end{center} for
the corresponding sequences $\{l_i\}, \{r_i\}$ and $\{l_i'\},
\{r_i'\}$. If $Q$ and $Q'$ are c.l.s. of $\frak{sp}_\infty$, then
$Q'\subset Q$ if and only if\begin{center}$l_i\ge
l_i'$.\end{center} Finally, if $Q$ and $Q'$ are c.l.s. of $\frak{so}_\infty$, then
$Q'\subset Q$ if and only if $Q$ and $Q'$ have the same integer
or semiinteger type, and
\begin{center}$l_i\ge l_i'$.\end{center}
\begin{corollary}\label{Lmid} If $\frak g_\infty=\frak{sl}_\infty, \frak{sp}_\infty$, the only minimal c.l.s. is the trivial c.l.s. (i.e. the c.l.s. $Q$ with $Q_n$ being the isomorphism class of the 1-dimensional trivial $\frak g_n$-module for all $n\ge1$). If $\frak g_\infty=\frak{so}_\infty$, there are two minimal c.l.s.: the trivial one and $\mathcal R$.\end{corollary}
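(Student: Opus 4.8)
The plan is to deduce the statement from Zhilinskii's inclusion criterion (\S\ref{SSOrd}) together with the decomposition of a c.l.s.\ into its irreducible components recalled in \S\ref{SScls}. Throughout, "c.l.s." is tacitly taken to be non-empty (the empty c.l.s.\ gives rise to no proper ideal); note that a non-empty c.l.s.\ $Q$ is automatically non-empty at every level, since the restriction of a non-zero $\frak g_n$-module to $\frak g_m$ is non-zero and the relation $Q_m=\langle Q_n\rangle_m$ then forces $Q_m\ne\emptyset$. The first observation is that a minimal c.l.s.\ must be irreducible: if $Q=Q(1)\cup\dots\cup Q(r)$ with $r\ge2$ distinct irreducible components, then each $Q(i)$ is a non-empty c.l.s.\ strictly contained in $Q$. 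So it suffices to decide which irreducible c.l.s.\ are minimal.

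The key point is the claim: \emph{every irreducible c.l.s.\ $Q$ contains the trivial c.l.s., unless $\frak g_\infty=\frak{so}_\infty$ and $Q$ is of semiinteger type, in which case $Q$ contains $\mathcal R$.} To prove it, write $Q$ in its normal form (4), (5), or (6), with associated non-increasing sequence $\{l_i\}$ (and $\{r_i\}$ for $\frak{sl}_\infty$) in $\mathbb Z_{\ge0}\cup\{+\infty\}$; since the exponents $x_j,z_j$ are $\ge0$, one has $l_i\ge m\ge0$ and $r_i\ge m\ge0$ for all $i$. The trivial c.l.s.\ is the empty product, so its sequences are identically $0$ and it is of integer type, while $\mathcal R$ likewise has sequences identically $0$ but is of semiinteger type. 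Now apply the inclusion criterion of \S\ref{SSOrd}. For $\frak{sp}_\infty$ one needs only $l_i\ge0$, which holds. For $\frak{so}_\infty$ one needs matching type and $l_i\ge0$; this gives $(\text{trivial})\subseteq Q$ when $Q$ is of integer type and $\mathcal R\subseteq Q$ when $Q$ is of semiinteger type. For $\frak{sl}_\infty$ one needs $a,b\in\mathbb Z_{\ge0}$ with $a+b=m$, $l_i\ge a$ and $r_i\ge b$ for all $i$; the choice $a=m$, $b=0$ works since $l_i\ge m$ and $r_i\ge0$. This proves the claim.

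It remains to assemble the pieces. Let $Q$ be a minimal c.l.s.; it is irreducible, so by the claim it contains the trivial c.l.s.\ or, for $\frak{so}_\infty$, $\mathcal R$, and minimality forces $Q$ to equal one of these. Conversely, the trivial c.l.s.\ is minimal: if a c.l.s.\ $Q'$ is contained in it, choose an irreducible component $Q'(1)$ of $Q'$; then $Q'(1)\subseteq Q'\subseteq(\text{trivial})$, while the claim gives $(\text{trivial})\subseteq Q'(1)$ (for $\frak{so}_\infty$, note $Q'(1)$ is of integer type, being contained in an integer-type c.l.s.), so all three coincide and $Q'$ is the trivial c.l.s. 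The same reasoning, with the inclusion criterion forcing any c.l.s.\ inside $\mathcal R$ to be of semiinteger type, shows $\mathcal R$ is minimal for $\frak{so}_\infty$; and since the trivial c.l.s.\ and $\mathcal R$ are of different type they are incomparable, so the list is complete.

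There is no genuine obstacle here; the only care needed is in the reduction to irreducible c.l.s., in tracking the integer/semiinteger dichotomy for $\frak{so}_\infty$, and in the small amount of bookkeeping with the auxiliary parameters $a,b$ appearing in the $\frak{sl}_\infty$ inclusion criterion.
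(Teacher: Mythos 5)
Your argument is correct and follows precisely the route the paper indicates with its one-line proof ``Follows from the inclusion criterion of A.~Zhilinskii'': you reduce to irreducible c.l.s.\ via the decomposition into irreducible components and then verify containment of the trivial c.l.s.\ (or $\mathcal R$ in the semiinteger case) directly from the $\{l_i\}$, $\{r_i\}$ criteria of \S\ref{SSOrd}. The only difference is that you spell out the bookkeeping (the choice $a=m$, $b=0$ for $\frak{sl}_\infty$, the type-matching for $\frak{so}_\infty$) which the paper leaves implicit.
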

\begin{proof}Follows from the inclusion criterion of A.~Zhilinskii.\end{proof}
\subsection{Tensor product and ideals}\label{SStpi}Fix $i$. If $S_1, S_2\subset\Irr\frak g_n$ we
set\begin{center}$S_1\otimes S_2:=\{z\in\Irr\frak g_n \mid$Hom$(z,
z_1\otimes z_2)\ne 0$ for some $z_1\in S_1$ and $z_2\in
S_2\}$.\end{center}Furthermore, it is clear that the tensor product
of two c.l.s. is a well defined c.l.s.. One can check
that\begin{center}$(\mathcal L_v^\infty\mathcal
L_{v+1}^{x_{v+1}}\mathcal L_{v+2}^{x_{v+2}}...\mathcal
L_n^{x_n})~~\mathcal E^m~~(\mathcal R_w^\infty\mathcal
R_{w+1}^{z_{w+1}}\mathcal R_{w+2}^{z_{w+2}}...\mathcal
R_n^{z_n})=$\\$(\mathcal L_1^\infty)^{\otimes v}\otimes(\mathcal
R_1^\infty)^{\otimes w}\otimes((\mathcal L_1^{x_{v+1}}\mathcal
L_2^{x_{v+2}}...\mathcal L_{n-v}^{x_n})~~\mathcal E^m~~(\mathcal
R_1^{z_{w+1}}\mathcal R_2^{z_{w+2}}...\mathcal
R_{n-w}^{z_n}))$\end{center}for $\frak g_\infty=\frak{sl}_\infty$,
and\begin{center}$(\mathcal L_v^\infty\mathcal
L_{v+1}^{x_{v+1}}\mathcal L_{v+2}^{x_{v+2}}...\mathcal
L_n^{x_n})~~\mathcal E^m=(\mathcal L_1^\infty)^{\otimes
v}\otimes((\mathcal L_1^{x_{v+1}}\mathcal L_2^{x_{v+2}}...\mathcal
L_{n-v}^{x_n})~~\mathcal E^m)$,\\\hspace{12pt}$(\mathcal
L_v^\infty\mathcal L_{v+1}^{x_{v+1}}\mathcal
L_{v+2}^{x_{v+2}}...\mathcal L_n^{x_n})~~\mathcal E^m~~\mathcal
R=(\mathcal L_1^\infty)^{\otimes v}\otimes((\mathcal
L_1^{x_{v+1}}\mathcal L_2^{x_{v+2}}...\mathcal
L_{n-v}^{x_n})~~\mathcal E^m~~\mathcal R)$\hspace{12pt}
(7)\end{center}for $\frak g_\infty=\frak{so}_\infty,
\frak{sp}_\infty$, where $(\mathcal L_1^\infty)^{\otimes
v}:=\mathcal L_1^\infty\otimes\mathcal
L_1^\infty\otimes...\otimes\mathcal L_1^\infty (v$ times). Note that
formula (7) applies to $\frak{so}_\infty$-case only.

The above formulas yield a different parametrization of the
irreducible c.l.s.: for $\frak g_\infty=\frak{sl}_\infty$ the
irreducible c.l.s. are parametrized by triples $(v, w, Q_f)$, where
$v, w\in\mathbb Z_{\ge0}$ and $Q_f$ is an irreducible c.l.s. of
finite type, and for $\frak g_\infty=\frak{so}_\infty,
\frak{sp}_\infty$ the irreducible c.l.s. are parametrized by pairs
$(v, Q_f)$, where $v\in\mathbb Z_{\ge0}$ and $Q_f$ is an irreducible
c.l.s. of finite type.

For $\frak g_\infty=\frak{sl}_\infty$ we set
\begin{center}$\cls(v, w, Q_f):=(\mathcal L_1^\infty)^{\otimes
v}\otimes(\mathcal R_1^\infty)^{\otimes w}\otimes Q_f$\end{center}
and
\begin{center}$I(v, w, Q_f):=I(\cls(v, w, Q_f))$.\end{center}
However, it is easy to check that the annihilators of $\mathcal
L_1^\infty$ and $\mathcal R_1^\infty$ in $\U(\frak{sl}_\infty)$
coincide. Therefore
\begin{center}$I(v, w, Q_f)$=$I(v+w, 0, Q_f)$.\end{center}
In what follows we call an irreducible c.l.s. for $\frak
g_\infty=\frak{sl}_\infty$ of the form $\cls(v, 0, Q_f)$ a {\it left
irreducible} c.l.s. An arbitrary {\it left} c.l.s. is defined as a
finite union of left irreducible c.l.s. We denote the left
irreducible c.l.s. for $\frak g_\infty=\frak{sl}_\infty$ by $\cls(v,
Q_f)$. We also set $I(v, Q_f):=I(v, 0, Q_f)$.

Note that for $\frak g_\infty=\frak{so}_\infty, \frak{sp}_\infty$ an
arbitrary irreducible c.l.s. corresponds to a pair $(v, Q_f)$.
Therefore the notations $\cls(v, Q_f)$ and $I(v, Q_f)$ make sense in
all three cases: $\frak{sl}_\infty, \frak{so}_\infty,
\frak{sp}_\infty$.
\subsection{Classification of integrable ideals in $\U(\frak g_\infty)$}\label{SSsl}Recall that $\frak g_n=\frak{sl}_{n+1}$ for $\frak g_\infty\cong\frak{sl}_\infty$. In this case the space of weights of $\frak g_n$ is identified
with the set of $(n+1)$-tuples $(\lambda_1,..., \lambda_{n+1})$, $\lambda_i\in\mathbb F$, up to the equivalence relation \begin{center}$(\lambda_1,..., \lambda_{n+1})\sim(\lambda_1+k,..., \lambda_{n+1}+k)$\end{center}for $k\in\mathbb F$. The lattice of integral dominant weights is identified with the set of weights such that $\lambda_{i+1}-\lambda_i\in\mathbb Z_{\ge0}$ for $1\le i\le n-1$.

\label{SSsop} For $\frak g_\infty\cong\frak{sp}_\infty$, the space
of weights of $\frak g_n\cong\frak{sp}_{2n}$ is identified with the
set of $n$-tuples $(\lambda_1,..., \lambda_n)$, $\lambda_i\in\mathbb
F$. The lattice of integral dominant weights is identified with the
set of weights such that $\lambda_i\in\mathbb Z_{\ge0}$ and
$\lambda_{i+1}\ge\lambda_i$ for~$1\le i\le n-1$.

For $\frak g_\infty\cong\frak{so}_\infty$, we assume that $\frak
g_n\cong\frak{so}_{2n}$. The space weights of $\frak
g_n=\frak{so}_{2n}$ is identified with the set of $n$-tuples
$(\lambda_1,..., \lambda_n)$, $\lambda_i\in\mathbb F$. The lattice
of integral dominant weights is identified with the set of weights
such that $\lambda_i\in\frac{1}{2}\mathbb Z$,
$\lambda_{i+1}-\lambda_i\in\mathbb Z_{\le0}$ for~$1\le i\le n-1$ and
$\lambda_{n-1}\ge|\lambda_n|$.

Let $\mathcal W_n$ denote the Weyl group of $\frak g_n$, and let
$\rho_n$ be the half-sum of positive roots of $\frak g_n$. The set
of radical ideals of Z$_{\U(\frak g_n)}$ is identified with the set
of Zariski-closed $\mathcal W_n$-stable subsets of the space of
weights of $\frak g_n$.

Let $Q$ be a c.l.s. For any $n\in\mathbb Z_{\ge1}$ we denote by
$\overline{Q_n}$ the Zariski-closure of the set of highest weights
of the isomorphism classes of simple $\frak g_n$-modules from $Q_n$
in the space of weights of $\frak g_n$.

\begin{lemma}\label{Cirs}\label{Csp2012}\label{Csp2011}a) For any $v\in\mathbb Z_{\ge0}$ and any c.l.s. $Q_f$ of finite type, the following conditions are equivalent:
\begin{enumerate}
\setcounter{AP}{1}\item[(\roman{AP})] $(v, Q_f)=(v', Q_f')$,
\setcounter{AP}{2}\item[(\roman{AP})] $\mathcal W_n(\rho_n+\overline{\mathrm{cls}(v, Q_f)_n})=\mathcal W_n(\rho_n+\overline{\mathrm{cls}(v', Q_f')_n)}$ for all $n\in\mathbb Z_{\ge1}$.
\end{enumerate}
b) Let $z\in\Irr \frak g_n$ and let $\lambda_z$ be the highest
weight of a representative of $z$. For $\frak
g_\infty\cong\frak{so}_\infty$, $\frak{sp}_\infty$, we
have\begin{center}$\lambda_z+\rho_n\in\mathcal
W_n(\rho_n+\overline{\mathrm{cls}(v, Q_f)_n})$\end{center} if and
only if $z\in\cls(v, Q_f)_n$. For $\frak
g_\infty\cong\frak{sl}_\infty$, we
have\begin{center}$\lambda_z+\rho_n\in\mathcal
W_n(\rho_n+\overline{\mathrm{cls}(v, Q_f)_n})$\end{center}if and
only if there exist $v', v''\in\mathbb Z_{\ge0}$ such that
$v'+v''=v$ and $z\in\cls(v', v'', Q_f)_n$.\\
c) Let $v\in\mathbb Z_{\ge0}$ and $Q_f$ be a c.l.s. for $\frak
g_\infty$. Then, for $\frak g_\infty\cong\frak{so}_\infty$,
$\frak{sp}_\infty$ we have $Q(I(v, Q_f))=\cls(v, Q_f)$. For $\frak
g_\infty\cong\frak{sl}_\infty$ we have \begin{center}$Q(I(v,
Q_f))=\cup_{v'+v''=v}\cls(v', v'', Q_f)$.\end{center}
\end{lemma}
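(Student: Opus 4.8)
The plan is to treat the three parts in the order (a), (b), (c), with part (a) being the logical core and the only place where a substantial argument is needed; parts (b) and (c) will then follow by unwinding definitions together with the classification results recalled in Section~\ref{Sls}.

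For part (a), the implication (i)$\Rightarrow$(ii) is immediate: if $(v,Q_f)=(v',Q_f')$ then $\mathrm{cls}(v,Q_f)=\mathrm{cls}(v',Q_f')$ as c.l.s., hence $Q_n$ coincides with $Q_n'$ for all $n$, and therefore so do their Zariski-closures and the $\mathcal W_n$-orbits of the $\rho_n$-shifts. The content is (ii)$\Rightarrow$(i). Here I would argue as follows. Fix $n$ and recall the explicit parametrizations of the weight lattices of $\frak g_n$ given in Subsections~\ref{SSsl}--\ref{SSsop}. The key observation is that a $\mathcal W_n$-orbit through a strictly dominant point (i.e.\ a point $\rho_n+\lambda$ with $\lambda$ dominant) meets the dominant chamber in exactly one point, so that $\mathcal W_n(\rho_n+\overline{Q_n})$ determines $\overline{Q_n}$ uniquely as the intersection of that orbit-closure with the shifted dominant cone, minus the $\rho_n$-shift. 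Thus condition (ii) forces $\overline{\mathrm{cls}(v,Q_f)_n}=\overline{\mathrm{cls}(v',Q_f')_n}$ for all $n$. It then remains to recover $(v,Q_f)$ from the chain of Zariski-closures $\{\overline{\mathrm{cls}(v,Q_f)_n}\}_n$. For this I would use Zhilinskii's unique factorization (Proposition after (3)) and the inclusion criterion of Subsection~\ref{SSOrd}: the exponent $v$ is read off as $\lim_{i\to\infty}$ of the "number of infinite entries" visible in the closures at level $n$ as $n\to\infty$ — concretely, $v$ is the largest integer such that $(\mathcal L_1^\infty)^{\otimes v}$ divides the c.l.s., and this divisibility is detected by the shape of $\overline{\mathrm{cls}(v,Q_f)_n}$ (the presence of a full "first $v$ coordinates free" subcone). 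Once $v$ is determined, dividing out $(\mathcal L_1^\infty)^{\otimes v}$ (and $(\mathcal R_1^\infty)^{\otimes w}$ in the $\frak{sl}_\infty$ case, but recall $I$ does not see the split between $v$ and $w$) leaves a finite-type c.l.s. whose closures $\overline{(Q_f)_n}$ are finite sets of weights; a finite-type irreducible c.l.s. is determined by its members at each level by the unique factorization for finite-type c.l.s.\ into $\mathcal E,\mathcal L_p$ (and $\mathcal R_q$, $\mathcal R$), so $Q_f=Q_f'$.

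For part (b), I would argue directly from the description of $\mathrm{cls}(v,Q_f)$ as $(\mathcal L_1^\infty)^{\otimes v}\otimes Q_f$ (or with the extra $(\mathcal R_1^\infty)^{\otimes w}$ factor) via the tensor-product formulas (7) and their $\frak{sl}_\infty$-analogue in Subsection~\ref{SStpi}. The point is that a weight $\lambda_z$ satisfies $\lambda_z+\rho_n\in\mathcal W_n(\rho_n+\overline{\mathrm{cls}(v,Q_f)_n})$ precisely when the unique dominant representative of that orbit is $\mu+\rho_n$ with $\mu\in\overline{\mathrm{cls}(v,Q_f)_n}$, and since $\overline{\mathrm{cls}(v,Q_f)_n}$ is by construction the Zariski-closure of the set of highest weights occurring in $\mathrm{cls}(v,Q_f)_n$, one must check that this closure contains no dominant integral weights beyond those already in $\mathrm{cls}(v,Q_f)_n$. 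For $\frak{so}_\infty,\frak{sp}_\infty$ this is a direct check using the explicit half-space/cone description of $\overline{\mathrm{cls}(v,Q_f)_n}$: the integral dominant points of the closure are exactly the members. For $\frak{sl}_\infty$ the subtlety is that $\mathcal L_1^\infty$ and $\mathcal R_1^\infty$ have the same annihilator, so the closure of $\mathrm{cls}(v,0,Q_f)_n$ actually equals the closure of $\cup_{v'+v''=v}\mathrm{cls}(v',v'',Q_f)_n$, which is why the statement is phrased with the existential $v'+v''=v$; I would verify this equality of closures from the weight-coordinate description (shifting "mass" between the first-$v'$ and last-$v''$ coordinates sweeps out the same closed set).

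For part (c), I would combine (b) with the definition $I(v,Q_f)=I(\mathrm{cls}(v,Q_f))$ and Proposition~\ref{Pintm} (so $I(v,Q_f)$ is the annihilator of a locally simple integrable module, hence prime, and $Q(I(v,Q_f))$ is a well-defined c.l.s.). By definition $Q(I(v,Q_f))_n=\{z\in\Irr\frak g_n\mid z\ \text{occurs in some integrable module annihilated by}\ I(v,Q_f)\}$, and since $I(v,Q_f)\cap\U(\frak g_n)=\cap_{z\in\mathrm{cls}(v,Q_f)_n}\Ann_{\U(\frak g_n)}z$, a simple $\frak g_n$-module lies in $Q(I(v,Q_f))_n$ iff its central character (equivalently, via Harish-Chandra, its $\mathcal W_n$-orbit of $\rho_n$-shifted highest weight) meets $\mathcal W_n(\rho_n+\overline{\mathrm{cls}(v,Q_f)_n})$ — this is where part (b) is applied verbatim. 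For $\frak{so}_\infty,\frak{sp}_\infty$ this gives $Q(I(v,Q_f))_n=\mathrm{cls}(v,Q_f)_n$, hence $Q(I(v,Q_f))=\mathrm{cls}(v,Q_f)$; for $\frak{sl}_\infty$ it gives $Q(I(v,Q_f))_n=\cup_{v'+v''=v}\mathrm{cls}(v',v'',Q_f)_n$, hence the stated formula.

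The main obstacle I anticipate is the (ii)$\Rightarrow$(i) direction of part (a), specifically the step of reconstructing the factorization exponent $v$ (and the finite-type part $Q_f$) purely from the chain of Zariski-closures of the $\mathcal W_n$-orbits: one must be careful that passing from $Q_n$ to its Zariski-closure and then to a $\mathcal W_n$-orbit does not lose the information distinguishing, say, $\mathcal L_v^\infty$ from $\mathcal L_v^\infty\mathcal L_{v+1}^{x}$ for some large $x$ at a fixed level $n$ — the distinction only becomes visible in the limit $n\to\infty$, so the argument genuinely needs the full chain, not any single level, and must be phrased stably in $n$.
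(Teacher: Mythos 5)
Your proposal follows essentially the same route as the paper: the whole lemma is anchored on an explicit description of the Zariski closure $\overline{\cls(v,Q_f)_n}$ (the first $v$ weight coordinates free, the remaining block constrained by $Q_f$), and parts (b), (c) are then deduced by passing through the central characters $\mathcal W_n(\rho_n+\cdot)$. That is exactly what the paper does. However, there is an error in your treatment of the $\frak{sl}_\infty$ case of part (b): you assert that $\overline{\cls(v,0,Q_f)_n}$ equals $\overline{\cup_{v'+v''=v}\cls(v',v'',Q_f)_n}$ and propose to verify it by a "mass-shifting" computation. This is false already for $v=1$, $Q_f=\mathbb F$: the closures of $(\mathcal L_1^\infty)_n$ and $(\mathcal R_1^\infty)_n$ are the distinct lines through $\omega_1$ and $\omega_n$ in the weight space of $\frak{sl}_{n+1}$. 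The union $\cup_{v'+v''=v}$ in the statement of (b) is \emph{not} due to the closures coinciding; it is produced by the Weyl group, which after the $\rho_n$-shift carries one line to the other. Concretely, a dominant $\lambda_z$ can satisfy $\lambda_z+\rho_n\in\mathcal W_n(\rho_n+\overline{\cls(v,0,Q_f)_n})$ because some Weyl-conjugate $w(\lambda_z+\rho_n)-\rho_n$ (which need not itself be dominant) lies in $\overline{\cls(v,0,Q_f)_n}$; the set of such $\lambda_z$ is strictly larger than the dominant integral part of the closure.

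The same issue infects your argument for part (a): the step "the $\mathcal W_n$-orbit through $\rho_n+\overline{Q_n}$ determines $\overline{Q_n}$ by intersecting with the shifted dominant cone" is not correct in general (non-dominant points of the closure get conjugated into the dominant cone and add extra points). What one actually shows is that the \emph{pair} $(v,Q_f)$ — not the closure itself — is determined by the $\mathcal W_n$-orbits for all $n$; this is a weaker and correct statement, and it is what the paper's explicit description of the closure buys you. Once you replace "the closure is recovered" with "the pair is recovered," and replace the false equality of closures with the observation that the Weyl orbits of the $\rho_n$-shifted closures of $\cls(v',v'',Q_f)_n$ (over all $v'+v''=v$) coincide, your argument lines up with the paper's.
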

\begin{proof}Part a) follows from the following explicit description
of the closure $\overline{\mathrm{cls}(v, Q_f)_{n+v}}$: if $Q_f$ is
an irreducible c.l.s. of finite type, then $(\lambda_1,...,
\lambda_{n+v})\in\overline{\mathrm{cls}(v, Q_f)_{n+v}}$ if and only
if the simple $\frak g_n$-module with highest weight
$(\lambda_{v+1},..., \lambda_{n+v})$ lies in $(Q_f)_n$.

Part b) is a straightforward corollary of part a). We proceed to c).

We fix $v, Q_f, n$. Let $z\in Q(I(v, Q_f))_n$ and $\lambda_z$ be the
highest weight of $z$. By definition, $I\cap\U(\frak
g_n)\subset\Ann_{\U(\frak g_n)}z$. Then $I\cap\mathrm Z(\U(\frak
g_n))\subset(\Ann_{\U(\frak g_n)}z)\cap\mathrm Z(\U(\frak g_n))$.
This inclusion is equivalent to the
condition\begin{center}$\lambda_z+\rho_n\in\mathcal
W_n(\rho_n+\overline{\mathrm{cls}(v, Q_f)_n})$.\end{center}
Therefore $z\in\cls(v, Q_f)_n$ for $\frak
g_\infty\cong\frak{sp}_\infty, \frak{so}_\infty$, and
$z\in\cup_{v'+v''=v}\cls(v', v'', Q_f)_n$ for $\frak
g_\infty\cong\frak{sl}_\infty$ by b).

Hence $Q(I(v, Q_f))=\cls(v, Q_f)$ for $\frak
g_\infty\cong\frak{so}_\infty, \frak{sp}_\infty$.

Assume that $\frak g_\infty\cong\frak{sl}_\infty$. Then $I(v', v'',
Q_f)=I(v'+v'', Q_f)$ and thus $\cls(v', v'', Q_f)\subset Q(I(v'+v'',
Q_f))$. Hence $Q(I(v, Q_f))\cup_{v'+v''=v}\cls(v', v'',
Q_f)$.\end{proof}

For any ideal $I\subset\U(\frak g_\infty)$,
set\begin{center}$Q(I)_n:=\{z\in\Irr{\frak g_n}\mid I\cap\U(\frak
g_n)\subset\Ann_{\U(\frak g_n)}z\}$\end{center} and note that $Q(I)$
is a well-defined c.l.s.. Note that if $I$ is integrable, then $I$
is the annihilator of $Q(I)$.
\begin{proposition}Assume $\frak g_\infty=\frak{sl}_\infty, \frak{so}_\infty, \frak{sp}_\infty$. An integrable ideal of $\U(\frak g_\infty)$ is prime if and only if it is primitive.~\footnote{The analogous statement is false for $\frak g_n$. For instance, $I(\mathcal L_1)\cap\U(\frak g_n), n\ge2,$ is an integrable prime ideal of $\U(\frak g_n)$ which is not primitive.}\end{proposition}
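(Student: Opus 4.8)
The plan is to prove the two implications separately. The implication ``primitive $\Rightarrow$ prime'' does not use integrability: it is the general fact that the annihilator of a simple module over an associative ring is prime. Concretely, if $I=\Ann_{\U(\frak g_\infty)}M$ with $M$ a simple $\U(\frak g_\infty)$-module and $AB\subseteq I$ for two-sided ideals $A,B$ with $B\not\subseteq I$, then $BM\neq 0$, hence $BM=M$ by simplicity of $M$, and therefore $AM=A(BM)=(AB)M=0$, i.e.\ $A\subseteq I$.

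For the converse, let $I$ be an integrable prime ideal and put $Q:=Q(I)$. Since $I$ is integrable it equals the annihilator of its c.l.s., i.e.\ $I=I(Q)$ (recorded just above). By Zhilinskii's structure theory of coherent local systems~\cite{Zh2}, $Q$ has finitely many maximal irreducible components $Q(1),\dots,Q(r)$ with $Q=\bigcup_{j=1}^{r}Q(j)$. Unwinding the definition of $I(\cdot)$ level by level, at each level $n$ one has $\bigcap_{z\in Q_n}\Ann_{\U(\frak g_n)}z=\bigcap_{j=1}^{r}\bigl(\bigcap_{z\in Q(j)_n}\Ann_{\U(\frak g_n)}z\bigr)$; passing to the unions over $n$ and using that $r$ is finite and that these unions are directed, one obtains $I=I(Q)=\bigcap_{j=1}^{r}I(Q(j))$. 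By Proposition~\ref{Pintm}, each $I(Q(j))$ is primitive, and in particular a proper two-sided ideal of $\U(\frak g_\infty)$.

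It then remains a purely ring-theoretic step: a prime ideal that contains a finite product of two-sided ideals contains one of the factors. Since $I(Q(1))\cdots I(Q(r))\subseteq\bigcap_{j}I(Q(j))=I$, primeness of $I$ forces $I(Q(j_0))\subseteq I$ for some $j_0$; combined with $I=\bigcap_{j}I(Q(j))\subseteq I(Q(j_0))$ this gives $I=I(Q(j_0))$, which is primitive. I do not expect a genuine obstacle here: the argument merely repackages facts already available above. The two places that need care are the identity $I(Q)=\bigcap_{j}I(Q(j))$, which rests on the finiteness of the set of irreducible components of a c.l.s.\ (part of Zhilinskii's classification), and the verification that Proposition~\ref{Pintm} really yields a primitive --- hence proper two-sided --- ideal for each component, so that the noncommutative prime-ideal argument applies verbatim.
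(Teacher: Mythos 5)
Your argument is correct and follows essentially the same route as the paper: write $I=I(Q(I))$, decompose $Q(I)$ into its finitely many irreducible components, use primeness of $I$ against the product $I(Q(1))\cdots I(Q(r))\subset I$ to conclude $I=I(Q(j_0))$, and invoke Proposition~\ref{Pintm} for primitivity, with the converse being the standard fact that primitive ideals are prime. Your extra care with the identity $I(Q)=\bigcap_j I(Q(j))$ and with spelling out ``primitive $\Rightarrow$ prime'' only makes explicit what the paper leaves implicit.
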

\begin{proof}Let $I$ be a prime integrable ideal of $\U(\frak g_\infty)$. Then $I$ is the annihilator of $Q(I)$. Let $Q_1,..., Q_s$ be the irreducible components of $Q(I)$. We have\begin{center}$I=I(Q(I))\subset\cap_{i\le s}I(Q_s)$\end{center}and\begin{center}$I(Q_1)I(Q_2)...I(Q_s)\subset I(Q(I))=I$.\end{center}
Therefore $I=I(Q(I))$ coincides with $I(Q_i)$ for some irreducible
c.l.s. $Q_i$. Thus the ideal $I(Q_i)$ is primitive by
Proposition~\ref{Pintm}. On the other hand, any primitive ideal is
prime.\end{proof}

For $\frak g_\infty=\frak{sl}_\infty$, we denote by $Q_l(I)$ the
union of all left irreducible components of $Q(I)$. For $\frak
g_\infty=\frak{so}_\infty, \frak{sp}_\infty$, we put $Q_l(I)=Q(I)$.

\begin{theorem}\label{Tclid}a) The maps
\begin{center}$I\mapsto Q_l(I),$\\$Q\mapsto I(Q)$\end{center}
are mutually inverse bijections between the set of prime integrable ideals $I\subset\U(\frak g_\infty)$ and the set of irreducible left c.l.s..\\
b) If $\frak g_\infty=\frak{so}_\infty, \frak{sp}_\infty$, the maps in a) extend to mutually inverse anti-isomorphisms\begin{center}$\hspace{150pt}I\mapsto Q(I),\hspace{150pt}(8)$\\$\hspace{150pt}Q\mapsto I(Q)\hspace{150pt}(9)$\end{center}between the lattice of integrable ideals in $\U(\frak g_\infty)$ and the lattice of c.l.s. for $\frak g_\infty$~\footnote{Contrary to our conventions from Section~\ref{Spre}, here we consider $\U(\frak g_\infty)$ as an integrable ideal and \{0\} as c.l.s.}.\\
c) If $\frak g_\infty=\frak{sl}_\infty$, any integrable ideal of
$\U(\frak{sl}_\infty)$ equals $I(Q)$ for some left c.l.s. $Q$ for
$\frak g_\infty$.\end{theorem}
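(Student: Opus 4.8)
The plan is to deduce all three parts from two simple facts, together with Lemma~\ref{Cirs}(c) and Zhilinskii's classification. The first fact is the formal identity $I(Q'\cup Q'')=I(Q')\cap I(Q'')$ for any two c.l.s. $Q',Q''$: intersecting with $\U(\frak g_m)$ turns it into the obvious equality $\cap_{z\in Q'_m\cup Q''_m}\Ann_{\U(\frak g_m)}z=(\cap_{z\in Q'_m}\Ann_{\U(\frak g_m)}z)\cap(\cap_{z\in Q''_m}\Ann_{\U(\frak g_m)}z)$, after which one takes the union over $m$. The second fact is that for a finite-dimensional simple $\frak g_m$-module $z$ the ideal $\Ann_{\U(\frak g_m)}z$ is \emph{prime}, since $\U(\frak g_m)/\Ann_{\U(\frak g_m)}z\cong\End z$ is a matrix algebra, hence a simple (so prime) ring. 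I will also use freely that $I(Q)$ is integrable, that $I=I(Q(I))$ for every integrable $I$, that a c.l.s. has only finitely many irreducible components, that a prime integrable ideal equals $I(Q_1)$ for a single irreducible c.l.s. $Q_1$ (the Proposition preceding the theorem), and, for $\frak g_\infty=\frak{sl}_\infty$, the identity $I(\cls(v,w,Q_f))=I(\cls(v+w,Q_f))$ established before the theorem.

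For part (a): given an irreducible left c.l.s. $Q=\cls(v,Q_f)$, the ideal $I(Q)$ is integrable and, by Proposition~\ref{Pintm}, primitive, hence prime. Conversely, if $I$ is prime integrable then $I=I(\cls(v,Q_f))$ for some left irreducible $\cls(v,Q_f)$, and I must compute $Q_l(I)$. For $\frak g_\infty=\frak{so}_\infty,\frak{sp}_\infty$, Lemma~\ref{Cirs}(c) gives $Q(I)=\cls(v,Q_f)$, already irreducible, so $Q_l(I)=\cls(v,Q_f)$. For $\frak g_\infty=\frak{sl}_\infty$, Lemma~\ref{Cirs}(c) gives $Q(I)=\cup_{v'+v''=v}\cls(v',v'',Q_f)$; applying Zhilinskii's inclusion criterion to the associated sequences $\{l_i\},\{r_i\}$ shows these summands are pairwise incomparable, hence are exactly the irreducible components of $Q(I)$, and that $\cls(v,Q_f)$ is the unique left one among them, so again $Q_l(I)=\cls(v,Q_f)$. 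Thus $I(Q_l(I))=I$ and $Q_l(I(\cls(v,Q_f)))=\cls(v,Q_f)$, yielding the mutually inverse bijections.

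For part (b) (with $\frak g_\infty=\frak{so}_\infty,\frak{sp}_\infty$): both maps are order-reversing, directly from the definitions, and $I=I(Q(I))$ while $I(Q)$ is always integrable, so it remains only to prove $Q(I(Q))=Q$ for every c.l.s. $Q$. Write $Q=Q_1\cup\dots\cup Q_r$ with $Q_1,\dots,Q_r$ its finitely many irreducible components. The inclusion $Q\subset Q(I(Q))$ follows since $I(Q)\subset I(Q_i)$ gives $Q_i=Q(I(Q_i))\subset Q(I(Q))$, using $Q(I(Q_i))=Q_i$ from Lemma~\ref{Cirs}(c). For the reverse inclusion, fix $z\in Q(I(Q))_m$; by the first fact $I(Q)\cap\U(\frak g_m)=\cap_i(I(Q_i)\cap\U(\frak g_m))$, so the product $\prod_i(I(Q_i)\cap\U(\frak g_m))$ lies inside $\Ann_{\U(\frak g_m)}z$, and primeness of $\Ann_{\U(\frak g_m)}z$ forces $I(Q_j)\cap\U(\frak g_m)\subset\Ann_{\U(\frak g_m)}z$ for some $j$, i.e. $z\in(Q_j)_m\subset Q_m$. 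Hence $Q(I(Q))=Q$, the two maps are mutually inverse, and being order-reversing bijections between posets one of which — integrable ideals, with $\U(\frak g_\infty)$ adjoined — is a lattice, they are lattice anti-isomorphisms. This reverse inclusion is the main obstacle of the whole proof: it is the only place where primeness of $\Ann z$, finiteness of the number of irreducible components, and the identification $Q(I(Q_i))=Q_i$ all enter simultaneously; everything else is bookkeeping with Zhilinskii's classification and inclusion criterion.

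For part (c) (with $\frak g_\infty=\frak{sl}_\infty$): let $I$ be an arbitrary integrable ideal, so $I=I(Q(I))$, and write $Q(I)=Q_1\cup\dots\cup Q_r$ for its irreducible components, each $Q_i=\cls(v_i,w_i,Q_{f,i})$ by Zhilinskii's classification. Put $Q_i':=\cls(v_i+w_i,Q_{f,i})$, a left irreducible c.l.s.; then $I(Q_i)=I(Q_i')$ by $I(\cls(v,w,Q_f))=I(\cls(v+w,Q_f))$, and by the first fact
\[
I=I(Q(I))=\bigcap_{i=1}^{r}I(Q_i)=\bigcap_{i=1}^{r}I(Q_i')=I\Bigl(\bigcup_{i=1}^{r}Q_i'\Bigr).
\]
Since $\bigcup_i Q_i'$ is a finite union of left irreducible c.l.s., it is a left c.l.s., which proves the claim.
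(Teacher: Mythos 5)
Your proof is correct and follows essentially the same route as the paper's, but you fill in a step the paper glosses over: in part (b) the paper asserts $Q(\cap_{i\le s}I(Q_i))=\cup_{i\le s}Q(I(Q_i))$ with only the laconic justification ``by a),'' whereas you give the actual argument — that $\Ann_{\U(\frak g_m)}z$ is prime (being the kernel of the surjection onto the matrix algebra $\End z$), so the product $\prod_i\bigl(I(Q_i)\cap\U(\frak g_m)\bigr)\subset\Ann_{\U(\frak g_m)}z$ forces some factor into the annihilator. You also spell out, via Zhilinskii's inclusion criterion, why the summands $\cls(v',v'',Q_f)$ with $v'+v''=v$ are pairwise incomparable and hence are exactly the irreducible components of $Q(I(v,Q_f))$ in the $\frak{sl}_\infty$ case, which the paper leaves implicit when it proves $I(v,0,Q_f)=I(v',0,Q_f')\Rightarrow(v,Q_f)=(v',Q_f')$ instead. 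Both proofs use the same skeleton (prime integrable $\Rightarrow I=I(Q_i)$ for a single irreducible $Q_i$ via $I(Q_1)\cdots I(Q_s)\subset I\subset\cap I(Q_i)$; reduction of $\cls(v,w,Q_f)$ to $\cls(v+w,0,Q_f)$; finite decomposition of a c.l.s. into irreducible components), so the difference is one of explicitness rather than strategy; your version is the more self-contained.
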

\begin{proof}Let $I$ be a prime integrable ideal of $\U(\frak g_\infty)$. By definition, $I$ is the annihilator of $Q(I)$. Let $Q_1,..., Q_s$ be the irreducible components of $Q(I)$. Then \begin{center}$I=I(Q(I))\subset\cap_{i\le s}I(Q_s)$\end{center}and\begin{center}$I(Q_1)I(Q_2)...I(Q_s)\subset I(Q(I))=I$.\end{center}
Therefore $I=I(Q(I))$ coincides with $I(Q_i)$ for some irreducible
c.l.s. $Q_i$. Hence, by Subsection~\ref{SStpi}, $I=I(v, Q_f)$ for
some $v\in\mathbb Z_{\ge0}$ and some c.l.s. of finite type $Q_f$.

To prove a) it remains to check that the annihilators of the c.l.s.
corresponding to distinct pairs $(v, Q_f)$ and $(v', Q_f')$ are
distinct. For $\frak g_\infty=\frak{sl}_\infty$ we have to prove
that $I(v, 0, Q_f)=I(v', 0, Q_f')$ if and only if $v=v'$ and
$Q_f=Q_f'$. For $\frak g_\infty=\frak{so}_\infty, \frak{sp}_\infty$,
we have to check that $I(v, Q_f)=I(v', Q_f')$ if and only if $v=v'$
and $Q_f=Q_f'$. Both statements follow from Lemma~\ref{Cirs}. Hence,
a) is proved.

To prove b) we assume that $\frak g_\infty\cong\frak{so}_\infty,
\frak{sp}_\infty$. Let $Q$ be any c.l.s.. Then $Q=\cup_{i\le s}Q_i$
for some irreducible c.l.s. $Q_i$, and $I(Q)=\cap_{i\le s}I(Q_i)$.
On the other hand, $Q_l(\cap_{i\le s}I(Q_i))=Q(\cap_{i\le s}I(Q_i))$
by definition, and\begin{center}$Q(\cap_{i\le s}I(Q_i))=\cup_{i\le
s}(Q(I(Q_i)))=\cup_{i\le s}Q_i=Q$\end{center} by a). Therefore the
maps (8) and (9) are mutually inverse. In addition, it is now
obvious that both maps are anti-homomorphisms of lattices. This
proves b).

To prove c) we assume that $\frak g_\infty\cong\frak{sl}_\infty$.
Let $I$ be an integrable ideal of $\U(\frak{sl}_\infty)$. Then
$I=\cap_{i\le s}I_s$, where $I_i$ are prime integrable ideals of
$\U(\frak{sl}_\infty)$. For any $i\le s$, $I_i=I(v_i, (Q_f)_i)$ for
some $(v_i, (Q_f)_i)$ as in a). In particular, $I_i$ is the
annihilator of a left local system $Q_i:=(v_i, 0, (Q_f)_i)$. Then
$I$ is the annihilator of $\cup_{i\le s}Q_i$. This proves
c).\end{proof} Note that for $\frak g_\infty\cong\frak{sl}_\infty$
the annihilators of $\mathcal L_2^\infty$ and $(\mathcal
L_2^\infty)\cup(\mathcal L_1^\infty\mathcal R_1)$  coincide. This
shows in particular that the one-to-one correspondence between left
irreducible c.l.s. of $\frak g_\infty$ and prime integrable ideals
of $\U(\frak g_\infty)$ can not be extended to an anti-isomorphism
between the corresponding lattices.

Nevertheless,  Theorem~\ref{Tclid}~c) provides a certain description
of general integrable ideal of $\U(\frak{sl}_\infty)$: it yields a
surjection from the set of c.l.s. of $\frak{sl}_\infty$ to the set
of integrable ideals of $\U(\frak{sl}_\infty)$. Two c.l.s. $Q_1,
Q_2$ determine the same integrable ideal $I(Q_1)=I(Q_2)$ if and only
if $Q(I(Q_1))=Q(I(Q_2))$. For any irreducible c.l.s. $Q$, the c.l.s.
$Q(I(Q))$ is described by Lemma~\ref{Cirs}~c). If $Q$ is a reducible
c.l.s., i.e. $Q=\cup_{j\le s}Q_j$ for some irreducible c.l.s.
$Q_1,..., Q_j$, then $Q(I(Q))=\cup_{j\le s}Q(I(Q_j))$. This allows
in principle to check when $Q(I(Q_1))=Q(I(Q_2))$.

Furthermore, let's point out that Theorem~\ref{Tclid}, together with
A. Zhilinskii's result~\cite{Zh2},~\cite{Zh3} that any coherent
local system is the union of finitely many coherent local systems,
implies that the lattice of integrable ideals of $\U(\frak
g_\infty)$ satisfies the ascending chain condition. This has already
been stated in~\cite{Zh2} and~\cite{Zh3}. We would like to think of
this result as of ``relative N\"otherianity'' of the algebra
$\U(\frak g_\infty)$. We don't know whether $\U(\frak g_\infty)$ is
two-sided N\"otherian.

Theorem~\ref{Tclid} and Corollary~\ref{Lmid} imply immediately that
if $\frak g_\infty=\frak{sl}_\infty, \frak{sp}_\infty$, the
augmentation ideal is the unique integrable maximal ideal of
$\U(\frak g_\infty)$. For $\frak g_\infty=\frak{so}_\infty$ there
are two integrable maximal ideals: the augmentation ideal and the
``spinor ideal'' $I(\mathcal
R)$.
\begin{corollary}The algebras $\U(\frak{sl}_\infty), \U(\frak{so}_\infty$) and $\U(\frak{sp}_\infty$) are pairwise nonisomorphic.\end{corollary}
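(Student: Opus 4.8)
The plan is to separate the three enveloping algebras by \emph{intrinsic} invariants of their lattices of integrable ideals. The key point is that the property of a two-sided ideal $I\subset\U(\frak g_\infty)$ being \emph{integrable} (Section~\ref{SScls}) is purely ring-theoretic, so any ring isomorphism $\U(\frak g_\infty)\to\U(\frak h_\infty)$ carries integrable ideals to integrable ideals and preserves inclusions among them; hence any numerical quantity extracted from the poset of integrable ideals is an isomorphism invariant.

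First I would use the first invariant: the number of maximal integrable ideals. By the discussion immediately preceding the corollary, $\U(\frak{so}_\infty)$ has exactly two of them (the augmentation ideal and the spinor ideal $I(\mathcal R)$), whereas $\U(\frak{sl}_\infty)$ and $\U(\frak{sp}_\infty)$ each have exactly one, namely the augmentation ideal $\frak m$. This already gives $\U(\frak{so}_\infty)\not\cong\U(\frak{sl}_\infty)$ and $\U(\frak{so}_\infty)\not\cong\U(\frak{sp}_\infty)$, so it remains to distinguish $\U(\frak{sl}_\infty)$ from $\U(\frak{sp}_\infty)$.

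For the latter I would use a second invariant: the number of integrable ideals \emph{covered} by $\frak m$ in the lattice of integrable ideals, i.e.\ the number of maximal integrable ideals strictly contained in $\frak m$. The claim is that this number is $1$ for $\frak{sp}_\infty$ and $2$ for $\frak{sl}_\infty$. Everything is computed on the coherent-local-system side. Note that $\frak m=I(\mathbf 1)$, where $\mathbf 1$ is the trivial c.l.s., and that by Corollary~\ref{Lmid} every nonempty c.l.s.\ of $\frak{sl}_\infty$ or $\frak{sp}_\infty$ contains $\mathbf 1$. Reading off the non-increasing sequences $\{l_i\}$ (and, for $\frak{sl}_\infty$, $\{r_i\}$) attached to an irreducible c.l.s.\ in Section~\ref{SSOrd}, one checks at once that the c.l.s.\ which cover $\mathbf 1$ are exactly $\mathcal L_1$ for $\frak{sp}_\infty$ (sequence $(1,0,0,\dots)$) and exactly $\mathcal L_1,\mathcal R_1$ for $\frak{sl}_\infty$, and that a reducible c.l.s.\ can never cover $\mathbf 1$. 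For $\frak{sp}_\infty$ one then invokes the lattice anti-isomorphism of Theorem~\ref{Tclid}(b): the unique integrable ideal covered by $\frak m$ is $I(\mathcal L_1)=\Ann_{\U(\frak{sp}_\infty)}(\Lambda^1 V_\infty)$. For $\frak{sl}_\infty$ one argues directly, using that $I=I(Q(I))$ for integrable $I$ and that $I\mapsto Q(I)$, $Q\mapsto I(Q)$ reverse inclusions: if $I(\mathcal L_1)\subseteq I\subseteq\frak m$ with $I$ integrable then $\mathbf 1\subseteq Q(I)\subseteq\mathcal L_1$, whence $Q(I)\in\{\mathbf 1,\mathcal L_1\}$ and $I\in\{\frak m,I(\mathcal L_1)\}$; likewise with $\mathcal R_1$; and if $I$ is integrable and covered by $\frak m$ then $Q(I)\supsetneq\mathbf 1$ forces $Q(I)$ to contain one of the two covers $\mathcal L_1,\mathcal R_1$ of $\mathbf 1$, so $I=I(\mathcal L_1)$ or $I=I(\mathcal R_1)$. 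Finally $I(\mathcal L_1)\ne I(\mathcal R_1)$ by Theorem~\ref{Tclid}(a), since $\mathcal L_1$ and $\mathcal R_1$ are distinct irreducible left c.l.s. Hence $\frak m$ covers exactly two integrable ideals in $\U(\frak{sl}_\infty)$ and exactly one in $\U(\frak{sp}_\infty)$, giving $\U(\frak{sl}_\infty)\not\cong\U(\frak{sp}_\infty)$.

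I expect the main obstacle to be the $\frak{sl}_\infty$ part of the second count. Because the correspondence between c.l.s.\ and integrable ideals of $\U(\frak{sl}_\infty)$ is \emph{not} a lattice anti-isomorphism --- only the left irreducible c.l.s.\ biject with prime integrable ideals (Theorem~\ref{Tclid}(a),(c)), and collapses of the type noted right after Theorem~\ref{Tclid} do occur --- one cannot simply transport the combinatorics of coherent local systems. The identity $I=I(Q(I))$, valid for \emph{integrable} ideals, together with the inclusion-reversing property of $I\mapsto Q(I)$, is exactly what lets one pin down the integrable ideals lying just below $\frak m$ and rule out spurious reducible ideals sneaking in between $I(\mathcal L_1)$ (or $I(\mathcal R_1)$) and $\frak m$.
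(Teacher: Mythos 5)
Your proof is correct and takes essentially the same route as the paper: both read two isomorphism invariants off the poset of integrable ideals (first the number of maximal integrable ideals to set $\U(\frak{so}_\infty)$ apart, then a count just below the augmentation ideal to separate $\U(\frak{sl}_\infty)$ from $\U(\frak{sp}_\infty)$), and both compute these on the coherent-local-system side via Theorem~\ref{Tclid} and the inclusion criterion of Subsection~\ref{SSOrd}. The only difference is in the second invariant: the paper counts \emph{prime submaximal} integrable ideals, which lets it invoke the bijection of Theorem~\ref{Tclid}~a) directly, whereas you count the integrable ideals \emph{covered} by the augmentation ideal and therefore need the extra bookkeeping (via $I=I(Q(I))$ and order reversal) to rule out non-prime integrable ideals sitting strictly between $I(\mathcal L_1)$ or $I(\mathcal R_1)$ and $\frak m$ in the $\frak{sl}_\infty$ case --- which you do correctly; since the covers turn out to be prime anyway, the two counts coincide ($2$ for $\frak{sl}_\infty$, $1$ for $\frak{sp}_\infty$) and both arguments succeed.
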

\begin{proof}The algebras
 $\U(\frak{sl}_\infty), \U(\frak{sp}_\infty)$ have each a unique integrable maximal ideal, while the algebra
 $\U(\frak{so}_\infty$) has two integrable maximal ideals. Hence
 $\U(\frak{sl}_\infty)\not\cong\U (\frak{so}_\infty)$ and $\U(\frak{sp}_\infty)\not\cong\U (\frak{so}_\infty)$ .

Consider now prime submaximal integrable ideals in
$\U(\frak{sl}_\infty)$ and $\U(\frak{sp}_\infty)$, i.e. integrable
prime ideals which are properly contained only in integrable maximal
ideals. Using the inclusion criterion of irreducible c.l.s. from
Subsection~\ref{SSOrd}, one checks immediately (using
Theorem~\ref{Tclid}~a)) that $\U(\frak{sl}_\infty)$ has two such
submaximal ideals, namely $I(\mathcal L_1)$ and $I(\mathcal R_1)$,
while $\U(\frak{sp}_\infty)$ has a single such ideal $I(\mathcal
L_1)$. Hence $\U(\frak{sl}_\infty)\not\cong\U(\frak{sp}_\infty)$.
\end{proof}
Finally, note that$$\hspace{82pt}\Var(I(v,
Q_f))=\begin{cases}\frak{sl}_\infty^{\le v}&\mathrm{for~}\frak
g_\infty=\frak{sl}_\infty,\\\frak{so}_\infty^{\le
2v}&\mathrm{for~}\frak
g_\infty=\frak{so}_\infty,\\\frak{sp}_\infty^{\le
2v}&\mathrm{for~}\frak
g_\infty=\frak{sp}_\infty.\\\end{cases}\hspace{82pt}(10)$$ Since
$\frak{so}^{\le2v+1}_\infty=\frak{so}^{\le 2v}_\infty$, all possible
associated ``varieties'' of ideals in $\U(\frak{sl}_\infty)$ and
$\U(\frak{so}_\infty)$ appear in the right-hand side of (10). The
only possible associated ``varieties'' which do not appear in the
right-hand side of (10) are $\frak{sp}^{\le 2v+1}_\infty$ for
$v\in\mathbb Z_{\ge0}$, and these proj-varieties are not associated
``varieties'' of integrable ideals of $\U(\frak{sp}_\infty)$.
\section{Some non-integrable ideals of $\U(\frak{sp}_\infty)$}
We will now provide non-integrable ideals of $\U(\frak{sp}_\infty)$
whose associated ``varieties'' are the proj-varieties
$\frak{sp}_\infty^{\le 2v+1}$ for $z\in\mathbb Z_{\ge0}$. We start
with a lemma and a corollary.

Let $\frak g$ be a semisimple Lie algebra. Consider the coproduct
\begin{center}$\Delta: \U(\frak g)\to \U(\frak
g)\otimes\U(\frak g)~(x\mapsto x\otimes1+1\otimes x$ for $x\in\frak
g)$.\hspace{20pt}\end{center} In what follows we will denote the
``diagonal'', ``left'' and ``right'' copies of $\frak g$
respectively by $\frak g_{\Delta}, \frak g_l, \frak g_r$, i.e. we
have $$\Delta: \U(\frak g_{\Delta})\subset\U(\frak
g_l)\otimes\U(\frak g_r).$$
\begin{lemma}\label{Ltens}Let $I_l, I_r$ be
ideals of $\U(\frak g)$. Then
$$\Var(I_l)+\Var(I_r)\subset\Var(\U(\frak g_\Delta)/(\U(\frak g_\Delta)\cap(I_l\otimes 1+1\otimes
I_r))),$$where $\Var(I_l)+\Var(I_r)$ is a pointwise sum of the
varieties $\Var(I_l)$ and $\Var(I_r)$ inside $\frak g^*$.\end{lemma}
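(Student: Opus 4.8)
The plan is to translate the statement about varieties into a statement about graded ideals in the symmetric algebra, where the Poisson structure makes the inclusion transparent. First I would recall that for a semisimple finite-dimensional $\frak g$, the associated variety $\Var(I)$ is the zero set in $\frak g^*$ of $\gr I$, or equivalently of $\rad(\gr I)$. So it suffices to show: if $\xi\in\Var(I_l)$ and $\eta\in\Var(I_r)$ (viewed as points of $\frak g^*$), then $\xi+\eta$ lies in the zero set of $\gr J$, where $J:=\U(\frak g_\Delta)\cap(I_l\otimes 1+1\otimes I_r)$, regarded as an ideal of $\U(\frak g_\Delta)$ with quotient $\U(\frak g_\Delta)/J$. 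Concretely, $\gr J\subset {\bf S}^\cdot(\frak g_\Delta)$, and I want to show every $f\in\gr J$ vanishes at $\xi+\eta$.

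The key step is to make the coproduct explicit at the level of symmetric algebras. The coproduct $\Delta\colon\U(\frak g)\to\U(\frak g)\otimes\U(\frak g)$ is filtered, and its associated graded map is the comultiplication ${\bf S}^\cdot(\frak g)\to{\bf S}^\cdot(\frak g)\otimes{\bf S}^\cdot(\frak g)\cong {\bf S}^\cdot(\frak g\oplus\frak g)$ dual to the addition map $\frak g^*\oplus\frak g^*\to\frak g^*$, $(\xi,\eta)\mapsto\xi+\eta$ (this is just the statement that on the linear generators $x\in\frak g$ we send $x\mapsto x\otimes 1+1\otimes x$, which remains true after passing to $\gr$). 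Now take $f\in\gr J$, say $f$ is the symbol of some $u\in\U(\frak g_\Delta)$ with $u\in I_l\otimes 1+1\otimes I_r$ and $\deg u$ minimal. Write $\Delta(u)=\sum a_i\otimes b_i + \sum c_j\otimes d_j$ with $a_i\in I_l$ and $d_j\in I_r$; passing to the associated graded of ${\bf S}^\cdot(\frak g\oplus\frak g)$ and keeping top-degree terms, the symbol of $\Delta(u)$ is $\Delta_{\gr}(f)$ and it lies in $(\gr I_l)\otimes {\bf S}^\cdot(\frak g) + {\bf S}^\cdot(\frak g)\otimes(\gr I_r)$. Evaluating $\Delta_{\gr}(f)$ at the point $(\xi,\eta)\in\frak g^*\oplus\frak g^*$ gives $f(\xi+\eta)$ on one side; on the other side, every term of $(\gr I_l)\otimes{\bf S}^\cdot(\frak g)$ vanishes at $(\xi,\eta)$ because $\xi\in\Var(I_l)=\Var(\gr I_l)$, and symmetrically for the $\frak g_r$-factor. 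Hence $f(\xi+\eta)=0$, which is exactly $\xi+\eta\in\Var(J)$, i.e. $\Var(I_l)+\Var(I_r)\subset\Var(\U(\frak g_\Delta)/J)$.

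The main obstacle, and the only place requiring care, is the compatibility of taking top-degree symbols with the coproduct: one must check that $\gr(\U(\frak g_\Delta)\cap(I_l\otimes 1+1\otimes I_r))$ is contained in $\big((\gr I_l)\otimes{\bf S}^\cdot(\frak g)+{\bf S}^\cdot(\frak g)\otimes(\gr I_r)\big)\cap {\bf S}^\cdot(\frak g_\Delta)$, where the intersection with ${\bf S}^\cdot(\frak g_\Delta)$ is via the graded comultiplication. In general $\gr(A\cap B)$ may be strictly smaller than $\gr A\cap\gr B$, but for the purpose of the \emph{inclusion} of varieties we only need the easy direction $\gr(A\cap B)\subseteq\gr A\cap\gr B$, which holds because the degree filtration on the intersection is induced from that on the ambient algebra. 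Concretely: if $u\in\U(\frak g_\Delta)$ has $u\in I_l\otimes1+1\otimes I_r$, then either $\Delta(u)$ has strictly smaller degree than $u$ (in which case its symbol contributes nothing of top degree, but then we replace $u$ by a lower-degree representative, or simply note the symbol of $u$ in ${\bf S}^\cdot(\frak g_\Delta)$ maps under $\Delta_{\gr}$ to the symbol of $\Delta(u)$, possibly zero, and a zero symbol vanishes everywhere), or the top-degree part of $\Delta(u)$ lands in $(\gr I_l)\otimes{\bf S}^\cdot(\frak g)+{\bf S}^\cdot(\frak g)\otimes(\gr I_r)$. In all cases $f(\xi+\eta)=0$. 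This completes the argument.
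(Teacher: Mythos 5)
Your argument is correct and follows essentially the same route as the paper: pass to associated graded objects, use the easy inclusion $\gr\bigl(\U(\frak g_\Delta)\cap(I_l\otimes\U(\frak g_r)+\U(\frak g_l)\otimes I_r)\bigr)\subset\gr I_l\otimes{\bf S}^\cdot(\frak g_r)+{\bf S}^\cdot(\frak g_l)\otimes\gr I_r$, and evaluate at $(\xi,\eta)$ using that the graded coproduct is dual to the addition map $\frak g^*\oplus\frak g^*\to\frak g^*$. You merely make explicit the last identification, which the paper leaves implicit (and your worry about a possible degree drop under $\Delta$ is moot, since the graded comultiplication is injective).
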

\begin{proof}We have $$\gr(\U(\frak g_\Delta)\cap(I_l\otimes\U(\frak g_r)+\U(\frak g_l)\otimes
I_r))\subset\gr(I_l\otimes\U(\frak g_r)+\U(\frak g_l)\otimes
I_r)=\gr I_l\otimes{\bf\mathrm S}^\cdot(\frak g_r)+{\bf\mathrm
S}^\cdot(\frak g_l)\otimes\gr
I_r.$$Therefore$$\Var(\gr(I_l))+\Var(\gr(I_r))\subset\Var(\gr(\U(\frak
g_\Delta)\cap(I_l\otimes\U(\frak g_r)+\U(\frak g_l)\otimes
I_r))).$$\end{proof}
\begin{corollary}\label{Ctens}Let $M_l, M_r$ be $\frak g$-modules with annihilators $I_l, I_r$. Let $I_{\Delta}$ be the annihilator of $M_l\otimes M_r$. Then\begin{center}$\Var(I_l)+\Var(I_r)\subset\Var(I_{\Delta})$.\end{center}\end{corollary}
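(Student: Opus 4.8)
The plan is to pin down the ideal $I_\Delta$ explicitly and then invoke Lemma~\ref{Ltens}. The subtlety is that $\Delta(I_l)$ and $\Delta(I_r)$ need not lie in $I_\Delta$, so one cannot argue directly with the coproduct; instead one uses that the annihilator of $M_l\otimes M_r$ as a module over $\U(\frak g_l)\otimes\U(\frak g_r)$ is completely determined by $I_l$ and $I_r$.

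First I would regard $M_l\otimes M_r$ as a module over $\U(\frak g_l)\otimes\U(\frak g_r)=\U(\frak g)\otimes\U(\frak g)$ in the evident way; its structure as a $\frak g_\Delta$-module is the restriction of this action along $\Delta\colon\U(\frak g_\Delta)\hookrightarrow\U(\frak g_l)\otimes\U(\frak g_r)$. Hence
$$I_\Delta=\U(\frak g_\Delta)\cap\Ann_{\U(\frak g_l)\otimes\U(\frak g_r)}(M_l\otimes M_r).$$

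The key step is the identity $\Ann_{\U(\frak g_l)\otimes\U(\frak g_r)}(M_l\otimes M_r)=I_l\otimes\U(\frak g_r)+\U(\frak g_l)\otimes I_r$. The inclusion ``$\supseteq$'' is clear, since $I_l$ annihilates $M_l$ and $I_r$ annihilates $M_r$. For ``$\subseteq$'', note that since $I_l$ and $I_r$ are the exact annihilators, $\U(\frak g)/I_l$ and $\U(\frak g)/I_r$ embed into $\End_{\mathbb F}(M_l)$ and $\End_{\mathbb F}(M_r)$; as we work over a field, the natural map $\End_{\mathbb F}(M_l)\otimes\End_{\mathbb F}(M_r)\longrightarrow\End_{\mathbb F}(M_l\otimes M_r)$ is injective, so $(\U(\frak g)/I_l)\otimes(\U(\frak g)/I_r)$ acts faithfully on $M_l\otimes M_r$. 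Therefore the annihilator of $M_l\otimes M_r$ in $\U(\frak g)\otimes\U(\frak g)$ is exactly the kernel of $\U(\frak g)\otimes\U(\frak g)\longrightarrow(\U(\frak g)/I_l)\otimes(\U(\frak g)/I_r)$, namely $I_l\otimes\U(\frak g_r)+\U(\frak g_l)\otimes I_r$. Combining with the previous display,
$$I_\Delta=\U(\frak g_\Delta)\cap\bigl(I_l\otimes\U(\frak g_r)+\U(\frak g_l)\otimes I_r\bigr),$$
and this is precisely the ideal $\U(\frak g_\Delta)\cap(I_l\otimes 1+1\otimes I_r)$ figuring in Lemma~\ref{Ltens} (the two-sided ideal generated by $I_l\otimes 1+1\otimes I_r$ is $I_l\otimes\U(\frak g_r)+\U(\frak g_l)\otimes I_r$). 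That lemma then gives $\Var(I_l)+\Var(I_r)\subset\Var(I_\Delta)$, as required.

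The only point needing a word of care is the faithfulness invoked above, i.e. the injectivity of $\End_{\mathbb F}(M_l)\otimes\End_{\mathbb F}(M_r)\longrightarrow\End_{\mathbb F}(M_l\otimes M_r)$ for possibly infinite-dimensional $M_l,M_r$; this is a standard fact about tensor products of vector spaces over a field (track a nonzero pure tensor through a choice of bases) and imposes no restriction on the modules. Everything else is bookkeeping around $\Delta$ together with a direct appeal to Lemma~\ref{Ltens}.
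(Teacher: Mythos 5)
Your proof is correct and follows essentially the same route as the paper: identify $I_\Delta$ as $\U(\frak g_\Delta)\cap\bigl(I_l\otimes\U(\frak g_r)+\U(\frak g_l)\otimes I_r\bigr)$ and then apply Lemma~\ref{Ltens}. The only difference is that you supply the justification (faithfulness of the $(\U(\frak g)/I_l)\otimes(\U(\frak g)/I_r)$-action on $M_l\otimes M_r$) for the annihilator identity, which the paper asserts without proof, and this is exactly the inclusion that is actually needed for the variety containment.
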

\begin{proof}We first consider $M_l\otimes M_r$ as $\U(\frak g_l)\otimes\U(\frak g_r)$-module. Then the structure of $\frak g$-module on $M_l\otimes M_r$ comes from the homomorphism $\Delta$. Therefore $$\Ann_{\U(\frak g)}(M_1\otimes M_2)=\U(\frak g_\Delta)\cap(I_l\otimes 1+1\otimes I_r).$$ Hence $\Var(I_l)+\Var(I_r)\subset\Var(I_{\Delta})$ by Lemma~\ref{Ltens}.\end{proof}
We now provide a maximal ideal $I\subset\U(\frak{sp}_\infty)$ such
that $\Var(I)=\frak{sp}(V)^{\le 1}$.

Consider the Weyl algebra $\mathbb W_\infty$ of infinitely many
variables, i.e. the associative algebra generated by $x_1, x_2,...,
x_n,...,
\partial_{x_1},
\partial_{x_2},..., \partial_{x_n},...$ with
relations\begin{center}$[x_i, x_j]=0,\hspace{10pt}[\partial_{x_i},
\partial_{x_j}]=0$ for all $i, j$;\\$[\partial_{x_i}, x_i]=1$ for $1\le i\le n-1$;\hspace{10pt}$[\partial_{x_i}, x_j]=0$ for all $i\ne j$.\end{center}
The subspace spanned by $1, x_ix_j, x_i\partial_{x_j},
\partial_{x_j}\partial_{x_i}$ is a Lie
subalgebra of the Lie algebra associated with $\mathbb W_\infty$.
This subalgebra $\tilde{\frak s}$ has a 1-dimensional center
generated by 1, and its derived  subalgebra $[\tilde{\frak s},
\tilde{\frak s}]$ is isomorphic to $\frak{sp}_\infty$. Therefore we
have a homomorphism $\U(\frak{sp}_\infty)\to \mathbb W_\infty$; the
kernel of this homomorphism is an ideal $I_{\mathbb W}$. For any
$n$, $\frak g_n=\frak{sp}_{2n}$ and $I_{\mathbb W}\cap\U (\frak
g_n)$ is a Joseph ideal~\cite{Jo}. We denote by $V_{\mathbb W}$ the
algebra $\mathbb F[x_1, x_2,...]$ considered as an
$\frak{sp}_\infty$-module.

It is well known that the ideal $I_{\mathbb W}\cap \U(\frak g_n)$ is
maximal (thus primitive and prime) in $\U(\frak{sp}_{2n})$ and not
integrable. Hence $I_{\mathbb W}$ is a maximal ideal in
$\U(\frak{sp}_\infty)$ which is not integrable. The zero-set
$\Var(I_{\mathbb W})$ coincides with $\frak{sp}_\infty^{\le 1}$
(note that any element of rank 1 in $\frak{sp}_{2n}$ is nilpotent).

More generally, we have the following.
\begin{proposition}For any $v\in\mathbb Z_{\ge0}$ we have
$$\Var(\Ann_{\U(\frak{sp}_\infty)}({\bf S}^\cdot(V_\infty\otimes\mathbb F^v)\otimes V_{\mathbb W}))=\frak{sp}_\infty^{\le 2v+1}.$$\end{proposition}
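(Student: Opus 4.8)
The plan is to bracket $\Var(I_\Delta)$ from both sides, where throughout I write $I_\Delta:=\Ann_{\U(\frak{sp}_\infty)}({\bf S}^\cdot(V_\infty\otimes\mathbb F^v)\otimes V_{\mathbb W})$. The ideal $\gr I_\Delta$ is Poisson, hence so is $\rad(\gr I_\Delta)$, so by Theorem~\ref{Tlinfrk} we have $\Var(I_\Delta)=\frak{sp}_\infty^{\le r}$ for some $r\in\mathbb Z_{\ge 0}$ unless $\rad(\gr I_\Delta)=0$, i.e.\ unless $\Var(I_\Delta)=\frak{sp}_\infty^*$. The dimension estimate below will exclude the latter and force $r=2v+1$; this I obtain from the two statements $\frak{sp}_\infty^{\le 2v+1}\subseteq\Var(I_\Delta)$ and $r\le 2v+1$.

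For the inclusion $\frak{sp}_\infty^{\le 2v+1}\subseteq\Var(I_\Delta)$ I would argue as follows. The module ${\bf S}^\cdot(V_\infty\otimes\mathbb F^v)$ is integrable and generates the basic c.l.s.\ $\mathcal L_v^\infty=\cls(v,Q_f)$ with $Q_f$ trivial, so $\Ann_{\U(\frak{sp}_\infty)}{\bf S}^\cdot(V_\infty\otimes\mathbb F^v)=I(v,Q_f)$, whose associated ``variety'' is $\frak{sp}_\infty^{\le 2v}$ by~(10); likewise $\Ann_{\U(\frak{sp}_\infty)}V_{\mathbb W}=I_{\mathbb W}$ and $\Var(I_{\mathbb W})=\frak{sp}_\infty^{\le 1}$. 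Corollary~\ref{Ctens} then gives $\frak{sp}_\infty^{\le 2v}+\frak{sp}_\infty^{\le 1}\subseteq\Var(I_\Delta)$. Since the projections $\frak{sp}(V_{n+1})^{\le k}\to\frak{sp}(V_n)^{\le k}$ are surjective, applying $\pr_{\frak g_n}$ (which is linear) yields $\frak{sp}(V_n)^{\le 2v}+\frak{sp}(V_n)^{\le 1}\subseteq\overline{\pr_{\frak g_n}\Var(I_\Delta)}$; and for $n$ large, so that by Lemma~\ref{Leig0} these are the usual rank-$\le k$ loci, the left-hand side is all of $\frak{sp}(V_n)^{\le 2v+1}$, because under the identification $\frak{sp}(V_n)\cong\{$symmetric bilinear forms on $V_n\}$ a form of rank $\le 2v+1$ is a sum of one of rank $\le 2v$ and one of rank $\le 1$. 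Passing to the inverse limit over large $n$ gives $\frak{sp}_\infty^{\le 2v+1}\subseteq\Var(I_\Delta)$, hence $r\ge 2v+1$.

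For $r\le 2v+1$ I pass to finite level. Restricting the module to $\frak g_n=\frak{sp}_{2n}$, the trivial $\frak g_n$-summands of $V_\infty$ and of $V_{\mathbb W}$ contribute only trivial tensor factors, which do not affect annihilators; hence $I_\Delta\cap\U(\frak g_n)=\Ann_{\U(\frak{sp}_{2n})}(N_n)$ with $N_n:={\bf S}^\cdot(V_n\otimes\mathbb F^v)\otimes\mathbb F[x_1,\dots,x_n]=\mathbb F[y_1,\dots,y_{2nv},x_1,\dots,x_n]$. The crucial point is that $\frak{sp}_{2n}$ acts on $N_n$ by differential operators of order $\le 2$: on the polynomial ring ${\bf S}^\cdot(V_n\otimes\mathbb F^v)$ it acts by linear vector fields, and on $\mathbb F[x_1,\dots,x_n]$ by the operators $x_ix_j$, $x_i\partial_{x_j}$, $\partial_{x_i}\partial_{x_j}$ spanning the relevant copy of $\frak{sp}_{2n}$ inside the Weyl algebra in the variables $x_1,\dots,x_n$. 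Consequently $\U(\frak{sp}_{2n})/(I_\Delta\cap\U(\frak g_n))$ embeds into the Weyl algebra on $n(2v+1)$ variables, so its Gelfand--Kirillov dimension is at most $2n(2v+1)$, i.e.\ $\dim\Var(I_\Delta\cap\U(\frak g_n))\le 2n(2v+1)$. On the other hand $\gr(I_\Delta\cap\U(\frak g_n))\subseteq(\gr I_\Delta)\cap{\bf S}^\cdot(\frak g_n)$, so $\overline{\pr_{\frak g_n}\Var(I_\Delta)}\subseteq\Var(I_\Delta\cap\U(\frak g_n))$. Since $\dim\frak{sp}(V_n)^*=n(2n+1)>2n(2v+1)$ for $n$ large, this excludes $\Var(I_\Delta)=\frak{sp}_\infty^*$, so $\Var(I_\Delta)=\frak{sp}_\infty^{\le r}$ and $\overline{\pr_{\frak g_n}\Var(I_\Delta)}=\frak{sp}(V_n)^{\le r}$, which therefore has dimension $\le 2n(2v+1)$. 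But for $n$ sufficiently large the variety $\frak{sp}(V_n)^{\le 2v+2}$ --- the determinantal variety of symmetric $2n\times 2n$ matrices of rank $\le 2v+2$, irreducible by Lemma~\ref{Lrkspe} --- has dimension $(v+1)(4n-2v-1)>2n(2v+1)$, so it cannot lie inside $\frak{sp}(V_n)^{\le r}$; hence $r\le 2v+1$. Together with the previous paragraph this gives $r=2v+1$, i.e.\ $\Var(I_\Delta)=\frak{sp}_\infty^{\le 2v+1}$.

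The genuinely delicate step is the upper bound $r\le 2v+1$: the inclusion $\frak{sp}_\infty^{\le 2v+1}\subseteq\Var(I_\Delta)$ is a formal consequence of Corollary~\ref{Ctens}, whereas bounding $\Var(I_\Delta)$ from above forces one to realize $I_\Delta\cap\U(\frak g_n)$ as the annihilator of a module on which $\frak{sp}_{2n}$ acts by differential operators of order at most two, and then to feed the resulting Gelfand--Kirillov estimate into the classical dimension formula for symmetric determinantal varieties. It is precisely the non-integrability of $V_{\mathbb W}$ --- equivalently, the infinite-dimensionality of its $\frak g_n$-isotypic components --- that prevents one from reading $\Var(I_\Delta)$ off a coherent local system as in Section~\ref{Sls}, and that makes this indirect route necessary.
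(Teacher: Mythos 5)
Your proof is correct, and your lower bound is essentially the paper's: Corollary~\ref{Ctens} plus the observation that $\overline{\frak{sp}_\infty^{\le 2v}+\frak{sp}_\infty^{\le 1}}=\frak{sp}_\infty^{\le 2v+1}$ (which the paper states without proof and you verify at finite level via symmetric forms and Lemma~\ref{Leig0}). Your upper bound, however, takes a genuinely different route. The paper restricts the module to the copy of $\frak{sl}_\infty\subset\frak{sp}_\infty$ coming from $\frak{sl}(V_n)\to\frak{sp}(V_n\oplus V_n^*)$; there the module is integrable, the intersection of its annihilator with $\U(\frak{sl}_\infty)$ is identified as $I(2v+1,\mathbb F)$, so by (10) the projection of the associated ``variety'' to $\frak{sl}_\infty^*$ lies in $\frak{sl}_\infty^{\le 2v+1}$, and Lemma~\ref{Lglsp} pulls this bound back to $\frak{sp}_\infty^{\le 2v+1}$. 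You instead argue level by level: $I_\Delta\cap\U(\frak g_n)$ is the annihilator of ${\bf S}^\cdot(V_n\otimes\mathbb F^v)\otimes\mathbb F[x_1,\dots,x_n]$ (the trivial multiplicity spaces factoring out, which is correct), the quotient embeds into the Weyl algebra on $n(2v+1)$ variables, a Gelfand--Kirillov estimate bounds $\dim\Var(I_\Delta\cap\U(\frak g_n))$ by $2n(2v+1)$, Theorem~\ref{Tlinfrk} forces $\Var(I_\Delta)=\frak{sp}_\infty^{\le r}$, and the dimension $(v+1)(4n-2v-1)$ of the symmetric determinantal variety excludes $r\ge 2v+2$. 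Both arguments work. The paper's stays inside its own c.l.s.\ machinery and needs neither Theorem~\ref{Tlinfrk} nor any dimension theory, at the price of the unproved branching claim identifying the intersection with $\U(\frak{sl}_\infty)$; yours avoids that claim but imports standard facts external to the paper (monotonicity of Gelfand--Kirillov dimension, $\mathrm{GK}(\mathbb W_m)=2m$, and the equality of $\mathrm{GK}(\U(\frak g_n)/I)$ with the dimension of the zero set of $\gr I$), and it leans essentially on Theorem~\ref{Tlinfrk}. Two cosmetic remarks: irreducibility of $\frak{sp}(V_n)^{\le 2v+2}$ is never needed, only its dimension; and the identification of $\frak{sp}(V_n)^{\le k}$ with a rank locus of symmetric matrices should be made explicit via $X\mapsto\omega(X\cdot,\cdot)$, together with the remark (via Lemma~\ref{Leig0}) that the shift $\lambda\mathrm{Id}$ is irrelevant for $k<n$.
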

\begin{proof}By Corollary~\ref{Ctens}, we have
\begin{center}$\overline{\Var(\Ann_{\U(\frak{sp}_\infty)}({\bf S}^\cdot(V_\infty\otimes\mathbb
F^v)))+\Var(I_{\mathbb W})}=\overline{\frak{sp}_\infty^{\le
2v}+\frak{sp}_\infty^{\le 1}}$=\\=$\frak{sp}_{\infty}^{\le
2v+1}\subset\Var(\Ann_{\U(\frak{sp}_\infty)}({\bf
S}^\cdot(V_\infty\otimes\mathbb F^v)\otimes V_{\mathbb
W})).$\end{center}

For any $n$ there is a natural embedding $\frak{sl}(V_n)
\to\frak{sp}(V_n\oplus V_n^*)$, and these embeddings define an
embedding $\frak{sl}_\infty\to\frak{sp}_\infty$. We have
\begin{center}$\Ann_{\U(\frak{sp}_\infty)}({\bf S}^\cdot(V_\infty\otimes\mathbb
F^v)\otimes V_{\mathbb W})\cap\U(\frak{sl}_\infty)=I(2v+1,\mathbb
F),$\end{center}where $\mathbb F$ stands for the c.l.s.
corresponding to the augmentation ideal, and therefore the image of
$\Var(\Ann_{\U(\frak{sp}_\infty)}({\bf
S}^\cdot(V_\infty\otimes\mathbb F^v)\otimes V_{\mathbb W}))$ under
the projection $\frak{sp}_\infty^*\to\frak{sl}_{\infty}^*$ lies in
$\frak{sl}_\infty^{\le 2v+1}$. Hence, by
Lemma~\ref{Lglsp},$$\Var(\Ann_{\U(\frak{sp}_\infty)}({\bf
S}^\cdot(V_\infty\otimes\mathbb F^v)\otimes V_{\mathbb
W}))\subset\frak{sp}_\infty^{\le 2v+1}.$$This shows that
$\Var(\Ann_{\U(\frak{sp}_\infty)}({\bf
S}^\cdot(V_\infty\otimes\mathbb F^v)\otimes V_{\mathbb
W}))=\frak{sp}_\infty^{\le 2v+1}$.
\end{proof}


\begin{thebibliography}{99}
\bibitem[BZh]{BZh} A. A. Baranov, A. G. Zhilinskii,  Diagonal direct limits of simple Lie algebras, Comm. Algebra {\bf27} (1999), 2749--2766.
\bibitem[B]{B} A. A. Baranov, Finitary Simple Lie Algebras, Journal of Algebra {\bf219}(1999), 299--329.
\bibitem[BS]{BS} A.A. Baranov, H. Strade, Finitary Lie algebras, Journal of Algebra {\bf254} (2002) 173--211.
\bibitem[Jo]{Jo} A. Joseph, The minimal orbit in a simple Lie algebra and its associated maximal ideal, Ann. sc. de l'\'Ecole Norm. Sup., S\'er {\bf 4, 9:1}(1976), 1--29.
\bibitem[AnPo]{AnPo}  E. M.~Andreev, V. L.~Popov, The stationary subgroups of points in general position in a representation space of a semisimple Lie group (Russian), Funkcional. Anal. i Prilozen. {\bf 5: 4} (1971), 1–8.
\bibitem[Bor]{Bor} W.~Borho, \"Uber Schichten halbeinfacher Lie-Algebren, Invent. Math. {\bf 65} (1981/82), 283–317.
\bibitem[Vi]{Vi} \'Ernest Vinberg, Commutative homogeneous spaces and co-isotropic symplectic actions, Russian Math. Surv. {\bf56}(2001), 1--60.
\bibitem[CM]{CM} David Collingwood, William McGovern, Nilpotent orbits in semisimple Lie algebras, Van Nostrand Reinhold Math. Ser., Van Nostrand Reinhold Co., New York, 1993.
\bibitem[Zh1]{Zh2} A. G. Zhilinskii, Coherent systems of representations of inductive families of simple complex Lie algebras, (Russian) preprint of Academy of Belarussian SSR, ser. 38(438), Minsk, 1990.
\bibitem[Zh2]{Zh1} A. G. Zhilinskii, Coherent finite-type systems of inductive families of non-diagonal inclusions, (Russian) Dokl. Acad. Nauk Belarusi {\bf 36:1}(1992), 9--13, 92.
\bibitem[Zh3]{Zh3} A. G. Zhilinskii, On the lattice of ideals in the universal enveloping algebra of a diagonal Lie algebra, preprint, Minsk, 2011.
\bibitem[DP]{DP} I. Dimitrov, I. Penkov, Locally semisimple and maximal subalgebras of the finitary Lie algebras $\frak{gl}(\infty), \frak{sl}(\infty), \frak{so}(\infty)$, and $\frak{sp}(\infty)$, Journal of Algebra {\bf322}(2009), 2069--2081.
\bibitem[Dy]{Dy} E. B. Dynkin, Semisimple subalgebras of semisimple Lie algebras (Russian), Mat. Sbornik N.S., {\bf 30}(72) (1952), 349--462. English: AMS Translations {\bf6}(1957), 111--244.
\bibitem[VP]{VP} \'E. B. Vinberg, V. L. Popov, Invariant theory (Russian) Algebraic geometry, 4 (Russian), Itogi Nauki i Tekhniki, Akad. Nauk SSSR, Vsesoyuz. Inst. Nauchn. i Tekhn. Inform., Moscow, 1989, 137–-315.
\end{thebibliography}
\end{document}